\numberwithin{equation}{section}
\newtheorem{proposition}{Proposition}[section]
\newtheorem{lemma}[proposition]{Lemma}
\newtheorem{theorem}[proposition]{Theorem}
\newtheorem{corollary}[proposition]{Corollary}
\theoremstyle{definition}
\newtheorem{remark}[proposition]{Remark}
\newtheorem{definition}[proposition]{Definition}
\DeclareMathOperator{\Bl}{Bl}
\DeclareMathOperator{\End}{End}
\DeclareMathOperator{\Id}{Id}
\DeclareMathOperator{\tr}{tr}
\DeclareMathOperator{\GL}{GL}
\DeclareMathOperator{\Aut}{Aut}
\DeclareMathOperator{\pr}{pr}
\DeclareMathOperator{\Scal}{Scal}
\DeclareMathOperator{\vol}{vol}
\newcommand{\C}{\mathbb{C}}
\renewcommand{\P}{\mathbb{P}}
\newcommand{\R}{\mathbb{R}}
\newcommand{\ddb}{i\partial \bar\partial}
\newcommand{\dbar}{\bar\partial}
\newcommand{\rk}{\mathrm{rk}}
\newcommand{\Vol}{\mathrm{Vol}}
\newcommand{\db}{\bar \partial}
\newcommand{\cB}{\mathcal{B}}
\newcommand{\cE}{\mathcal{E}}
\newcommand{\cH}{\mathcal{H}}
\newcommand{\cJ}{\mathcal{J}}
\newcommand{\cU}{\mathcal{U}}
\newcommand{\cV}{\mathcal{V}}
\newcommand{\cX}{\mathcal{X}}
\newcommand{\mfk}{\mathfrak{k}}
\newcommand{\mfh}{\mathfrak{h}}
\newcommand{\m}[1]{\mathcal{#1}}
\newcommand{\bb}[1]{\mathbb{#1}}
\newcommand{\mrm}[1]{\mathrm{#1}}
\newcommand{\mf}[1]{\mathfrak{#1}} 
\newcommand{\scr}[1]{\mathscr{#1}}
\newcommand{\dd}{\mathrm{d}}
\newcommand{\del}{\partial}
\newcommand{\delbar}{\bar\partial}
\newcommand{\Ok}[1]{O\left(k^{#1}\right)}
\newcommand{\Fut}{\mathrm{Fut}}
\DeclarePairedDelimiter\norm{\lVert}{\rVert}
\title[CscK metrics and semistable bundles]{Constant scalar curvature K\"ahler metrics and semistable vector bundles}
\author[Annamaria Ortu]{Annamaria Ortu}
\address{Annamaria Ortu, Department of Mathematical Sciences, University of Gothenburg, 412 96 Gothenburg, Sweden}
\email{ortu@chalmers.se}
\author[Lars Martin Sektnan]{Lars Martin Sektnan}
\address{Lars Martin Sektnan, Department of Mathematical Sciences, University of Gothenburg, 412 96 Gothenburg, Sweden}
\email{sektnan@chalmers.se}
\begin{document}

\begin{abstract}
We give a necessary and sufficient condition for the projectivisation of a slope semistable vector bundle to admit constant scalar curvature K\"ahler (cscK) metrics in adiabatic classes, when the base admits a constant scalar curvature metric.
More precisely, we introduce a stability condition on vector bundles, which we call \emph{adiabatic slope stability}, which is a weaker version of K-stability and involves only test configurations arising from subsheaves of the bundle.
We prove that, for a simple vector bundle with locally free graded object, adiabatic slope stability is equivalent to the existence of cscK metrics on the projectivisation, which solves a problem that has been open since work of Ross--Thomas. In particular, this shows that the existence of cscK metrics is equivalent to K-stability in this setting.
We provide a numerical criterion for the Donaldson-Futaki invariant associated to said test configurations in terms of Chern classes of the vector bundle. This criterion is computable in practice and we present an explicit example satisfying our assumptions which is coming from a vector bundle that does not admit a Hermite-Einstein metric.
\end{abstract}

\maketitle 

\section{Introduction}

One of the central goals in K\"ahler geometry is to understand when a K\"ahler manifold admits a constant scalar curvature metric in a given K\"ahler class. Such a metric may or may not exist, but if it does, it gives a canonical representative of a given K\"ahler class. The Yau-Tian-Donaldson conjecture \cite{Yau_OpenProblems, Tian_KahlerEinstein, DonaldsonToric02} predicts that the existence of a constant scalar curvature K\"ahler (cscK) metric should be equivalent to the algebro-geometric notion of K-(poly)stability. The conjecture is still open in general, and even in situations where it is known to hold, it is a non-trivial problem to verify whether or not K-stability holds.

In this article, we consider this question on projective bundles in \emph{adiabatic} classes. Let $\m{E} \to B$ be a simple holomorphic vector bundle over a smooth projective variety with a Hermitian metric $h$ and let $L \to B$ be a fixed ample line bundle on the base. More generally, our results will hold when $B$ to just  K\"ahler and we replace $c_1(L)$ by a K\"ahler class $\Omega$, but we keep the line bundle notation throughout for aesthetic purposes.
Consider the projectivisation $\pi:\bb{P}(\m{E}) \to B$, with the tautological line bundle $O_{\bb{P}({\m{E}})}(-1) \to \bb{P}(\m{E})$.
The dual $H := O_{\bb{P}({\m{E}})}(1)$ is a \emph{relatively ample} line bundle over $\bb{P}(\cE)$, i.e.\ the restriction on each fibre is ample.
Then the Hermitian metric $h$ induces a Hermitian metric $h^*$ on $H$ whose curvature defines a two-form on $\bb{P}(\cE)$
\[
i F_{h^*} =: \omega
\]
in $c_1(H)$ such that the restriction of $\omega$ to each fibre $\bb{P}(\cE_b)$ is the Fubini-Study metric induced by $h$.
Thus $\omega$ is a \emph{relative K\"ahler metric}.
For each $k \gg 0$, we can define K\"ahler classes on $\bb{P}(\cE)$ by pulling back a large multiple of the base line bundle, as
\[
c_1(H) + k\pi^*c_1(L).
\]
When considering such classes for all very large $k$, these classes are called \emph{adiabatic classes}. 

This paper is concerned with determining when $\bb{P}(\cE)$ admits constant scalar curvature K\"ahler metrics in adiabatic classes. The problem dates back to Hong \cite{hong98}, who proved that when $B$ admits a cscK metric $\omega_B$ and has discrete automorphism group, and $\cE$ is slope stable, then $\bb{P}(\cE)$ admits cscK metrics in adiabatic classes.
Through the Hitchin-Kobayashi correspondence of Donaldson \cite{Donaldson1985} and Uhlenbeck--Yau \cite{UhlenbeckYau_HYMstability}, slope stability is equivalent to the bundle being simple and admitting a Hermite-Einstein metric $h$. This is used to define the relatively Fubini-Study metric $\omega$ above, which in turn is used to produce cscK metrics on $\bb{P}(E)$, using perturbative techniques. In light of the Yau-Tian-Donaldson conjecture,  a conjectural algebro-geometric counterpart of Hong's result can be interpreted as follows: when the base is K-stable and $\m{E} \to B$ is slope stable, then $\bb{P}(E) \to B$ is K-stable.

Conversely, Ross--Thomas \cite{rossthomas2006obstruction} showed that if $\m{E} \to B$ is a strictly unstable bundle, then $\bb{P}(\m{E})$ is K-unstable in adiabatic classes. It has been a long standing problem to understand the behaviour in the strictly semistable case, which we solve here. More precisely, we consider the case when $\m{E}\to B$ is a slope semistable simple vector bundle. We introduce a notion of stability, which we call \emph{adiabatic slope stability}, that allows us to prove a result that fills the gap between the results of Hong and Ross--Thomas: in particular we give a necessary and sufficient condition for the existence of cscK metrics in adiabatic classes on projectivised vector bundles.

The main result is the following.
\begin{theorem}
\label{thm:main}
Suppose $\cE \to B$ is a sufficiently smooth, simple holomorphic vector bundle over a smooth projective variety $B$. Suppose that the group of automorphisms of $B$ that lift to $L$ is discrete and that $B$ admits a cscK metric in the K\"ahler class $c_1(L)$. Then the projectivisation $X=\P(\cE)$ admits a cscK metric in the classes $c_1(H)+kc_1(L)$ for all sufficiently large $k$ if and only if $X$ is adiabatically slope stable with respect to $L$.
\end{theorem}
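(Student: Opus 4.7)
The two directions of the equivalence are of quite different character, and I would treat them separately. For the necessity direction, the strategy is to show that the existence of a cscK metric on $\P(\cE)$ in an adiabatic class forces K-semistability against the distinguished class of test configurations built from saturated subsheaves $\cF \subset \cE$ used in the definition of adiabatic slope stability. By Stoppa-type results (and their refinements), existence of a cscK metric implies K-polystability, hence in particular non-negativity of the Donaldson--Futaki invariants of these deformation-to-the-normal-cone test configurations. Expanding these invariants in powers of $k$ using the asymptotic Riemann--Roch calculations already alluded to in the introduction then recovers precisely the numerical inequalities defining adiabatic slope stability, and the equality cases are ruled out by simplicity of $\cE$.

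For the sufficiency direction, which is where the bulk of the work lies, I would proceed by a perturbative construction starting from the graded object $\mathrm{Gr}(\cE) = \bigoplus_i \cE_i$ of a Jordan--H\"older filtration, which is assumed locally free, with each $\cE_i$ slope stable of the same slope as $\cE$. The Donaldson--Uhlenbeck--Yau theorem endows each $\cE_i$ with a Hermite--Einstein metric $h_i$, and $h = \bigoplus h_i$ is Hermite--Einstein on $\mathrm{Gr}(\cE)$. Hong's construction, in the form adapted to direct sums, then produces an honest cscK metric on $\P(\mathrm{Gr}(\cE))$ in each adiabatic class $c_1(H_0) + k c_1(L)$ for $k \gg 0$. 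The goal is to deform these metrics along the iterated extension realising $\cE$ from its graded pieces in order to produce cscK metrics on $\P(\cE)$ itself.

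The central obstacle is that a naive implicit function argument fails: since $\mathrm{Gr}(\cE)$ has a strictly larger automorphism group than $\cE$ (which is simple), the linearisation of the scalar curvature operator at the approximate solution on $\P(\mathrm{Gr}(\cE))$ has a non-trivial cokernel spanned by holomorphy potentials for the automorphisms of $\mathrm{Gr}(\cE)$ that do not lift to $\cE$. I would therefore set up an infinite-dimensional Lyapunov--Schmidt reduction, carried out order by order in $k^{-1}$: at each order a linear equation is solved modulo a finite-dimensional obstruction, producing an approximate cscK metric whose error lies in the span of these exceptional holomorphy potentials. In the adiabatic limit the residual obstruction can be reinterpreted as a moment map equation for the action of the reductive complex gauge group $\mathrm{Aut}(\mathrm{Gr}(\cE))$ on the space of extension classes connecting $\mathrm{Gr}(\cE)$ to $\cE$, so that solving the problem is reduced to a finite-dimensional GIT problem on this space of extensions.

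The hardest step, and the true technical heart of the argument, will be to identify the leading-order Donaldson--Futaki invariants of the subsheaf test configurations appearing in the definition of adiabatic slope stability with the components of this finite-dimensional moment map, in such a way that adiabatic slope stability of $\P(\cE)$ translates exactly into polystability of the extension class of $\cE$ under the complexified action. Once this dictionary is established, a Kempf--Ness type argument produces a zero of the moment map in the complex orbit of $\cE$, which when fed back into the Lyapunov--Schmidt reduction allows the implicit function theorem on the orthogonal complement of the obstruction to close, yielding a genuine cscK metric on $\P(\cE)$ in $c_1(H) + k c_1(L)$ for all sufficiently large $k$ and completing the proof.
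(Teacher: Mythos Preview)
Your overall architecture is right---reduce to a finite-dimensional moment map problem on the deformation space of the graded object, and use the stability hypothesis to find a zero---but the implementation in the paper differs from yours in several important respects.

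First, the paper does \emph{not} start from Hong's cscK metric on $\P(\mathrm{Gr}(\cE))$ and perturb it towards $\P(\cE)$. Instead, it works over the entire Kuranishi space $\cV \subset H^1(B,\End\cE_0)$ at once: for every $v \in \cV$ (not just the point corresponding to $\cE$) it perturbs the K\"ahler metric and the $\bar\partial$-operator so that the scalar curvature of $(\omega_k + i\partial\bar\partial\phi_{k,v}, J_{k,v})$ lands in the image of $\mfk = \mathrm{Lie}(\Aut\cE_0)$ inside $C^\infty(M)$. This is a fibrewise equation on the universal family $\cU \to \cV$, and solving it uses both the linearised scalar curvature operator and the bundle Laplacian (the linearised Hermite--Einstein operator), with uniform control in $v$ and $k$. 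The paper explicitly contrasts this with the classical order of operations you describe, remarking that running the finite-dimensional argument first and perturbing afterwards makes the linear analysis much more delicate.

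Second, the moment map structure is not set up by hand on a space of extension classes, but is obtained via the Dervan--Hallam theorem applied to the universal family $\cU \to \cV$: this gives a moment map $\mu_k$ for the $K$-action on $\cV$ with respect to the Weil--Petersson form, which the paper then has to show is positive by relating it to the bundle-theoretic Weil--Petersson metric. There is a genuine subtlety here that your outline does not address: the scalar curvature, after perturbation, lies in $\overline\mfk^M_{\phi_{k,v}}$ (Hamiltonians for the fibre action on $M$), but the Dervan--Hallam moment map sees $\overline\mfk_{\phi_k}|_{\cU_v}$ (Hamiltonians for the action on $\cU$), and these differ by terms involving the horizontal vector fields $\eta_\sigma$ on $\cV$. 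The paper's Lemma~\ref{Lemma:injective_projection} shows that the projection between these two copies of $\mfk$ is an isomorphism for small $v$, so that a zero of $\mu_k$ really is a cscK metric; this is singled out as the key step that allows solving a fibrewise rather than a global equation on the non-compact $\cU$.

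Third, the finite-dimensional step is carried out via the moment map flow (gradient flow of $\|\mu_k\|^2$) rather than a direct Kempf--Ness argument. If the flow limit $v_{k,\infty}$ were not in the orbit of $v$, one extracts a one-parameter degeneration whose Donaldson--Futaki invariant decomposes, via the eigenspace filtration of the limiting endomorphism, as a positive combination of Donaldson--Futaki invariants of subbundle test configurations (Lemma~\ref{lem:DFexpansion}); adiabatic slope stability then forces the contradiction. Your identification of ``leading-order DF invariants with components of the moment map'' is morally what happens, but the actual mechanism runs through this eigenspace decomposition rather than a direct computation on extension classes.
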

Thus, under these assumptions, our result completely classifies when $\bb{P}(\m{E})$ admits a cscK metric in adiabatic classes. Adiabatic slope stability is defined by checking the positivity of the Donaldson-Futaki invariant with respect to \emph{some} particular test configurations, see below. Adiabatic slope stability is thus implied by K-stability. Our result is therefore in particular a verification of the Yau-Tian-Donaldson conjecture for this class of projective bundles.
\begin{corollary}
With $X$ and $L$ as above, for all sufficiently large $k$, $X$ admits a cscK metric in  $c_1(H)+kc_1(L)$ if and only if $X$ is K-stable with respect to $H+kL$.
\end{corollary}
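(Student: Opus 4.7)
The plan is to deduce the corollary from Theorem~\ref{thm:main} by combining it with the known ``easy'' direction of the Yau--Tian--Donaldson correspondence. The argument runs through the chain of implications
\[
\text{cscK in } c_1(H)+kc_1(L) \ \Longrightarrow\ (X, H+kL)\text{ is K-stable} \ \Longrightarrow\ \text{adiabatic slope stability} \ \Longrightarrow\ \text{cscK},
\]
in which the middle arrow is immediate from the definitions and the last is exactly Theorem~\ref{thm:main}, so the only substantive ingredient beyond the paper's main result is the first arrow.

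For that first arrow I would invoke the now-standard principle that on a polarised manifold whose reduced automorphism group of the polarisation is trivial, the existence of a cscK metric forces K-stability. This can be extracted either from the properness and uniform K-stability theorems of Berman--Darvas--Lu together with the a priori estimates of Chen--Cheng, or from the slope/energy arguments of Donaldson and Stoppa. To apply it, one must verify that $\Aut_0(X, H+kL)$ is trivial for all sufficiently large $k$. Any element of this group preserves $\pi\colon X \to B$ (the fibration is intrinsically encoded in the adiabatic polarisation via the extremal contraction dual to the fibre class) and therefore descends to an automorphism of $B$ lifting to $L$; that group is discrete by hypothesis, and the fibrewise part is ruled out by simplicity of $\cE$, which forbids non-trivial bundle automorphisms. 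Hence $\Aut_0(X, H+kL)$ is trivial and the quoted cscK$\Rightarrow$K-stability result applies.

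The second implication is the definitional observation that adiabatic slope stability only requires positivity of the Donaldson--Futaki invariant along the restricted class of test configurations arising from subsheaves of $\cE$, and is therefore strictly weaker than full K-stability; the third is precisely Theorem~\ref{thm:main}.

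The only step requiring any real attention is the verification that $\Aut_0(X, H+kL)$ is trivial under the standing hypotheses; everything else is either definitional or a direct appeal to existing literature. Since simplicity of $\cE$ and the assumption that $\mathrm{Aut}(B)$ lifting to $L$ is discrete are tailored to yield exactly this rigidity, I do not anticipate a serious obstacle, and the corollary should follow cleanly once Theorem~\ref{thm:main} is in hand.
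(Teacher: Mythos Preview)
Your proposal is correct and follows essentially the same approach as the paper, which treats the corollary as immediate from the discussion preceding it: adiabatic slope stability is implied by K-stability since it tests fewer degenerations, and the reverse direction cscK $\Rightarrow$ K-stability is the known result of Donaldson and Stoppa. The triviality of $\Aut_0(X,H+kL)$ that you need for Stoppa's theorem is supplied in the paper by Lemma~\ref{lem:automs} together with the simplicity of $\cE$, so your argument lines up with the paper's internal references.
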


In fact, we show that it suffices to verify the adiabatic slope stability condition on the Kuranishi space that parametrises the deformations of $\m{E} \to B$, thus reducing the infinite dimensional problem of checking K-stability to a finite dimensional problem on the Kuranishi space. Together with an explicit formula \eqref{Eq:Fut_expression} for the Donaldson-Futaki invariant obtained using Legendre’s localisation formula \cite{LegendreLocalisation21}, this allows us to determine if adiabatic slope stability holds, and in \S \ref{sec:concreteeg} we give an example of a strictly semistable vector bundle whose projectivisation admits cscK metrics in certain adiabatic classes. This is the first such example.
\begin{corollary}
\label{cor:eg}
There exists a vector bundle $\m{E} \to B$, where $B$ is a blowup of $\bb{P}^2$ in sufficiently many points, which is strictly slope semistable with respect to some polarisation $L \to B$, and such that $\bb{P}(\m{E})$ admits cscK metrics in $c_1(\mathcal{O}(1) \otimes L^k)$ for all $k \gg 0$.
\end{corollary}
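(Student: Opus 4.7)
The strategy is to apply Theorem~\ref{thm:main}, so the task reduces to exhibiting a base $B$ and a simple, strictly slope semistable bundle $\m{E}\to B$ with locally free graded object, satisfying all the hypotheses of the theorem, such that $\P(\m{E})$ is adiabatically slope stable with respect to $L$. For the base I would take $B = \Bl_{p_1, \ldots, p_n}\P^2$, the blowup of $\P^2$ at $n$ points in sufficiently general position, with $n$ large enough that $\Aut(B)$ is trivial (so any subgroup lifting to $L$ is automatically discrete). By results in the spirit of Arezzo--Pacard together with an openness argument, one can then choose a polarisation $L$ on $B$ admitting a cscK representative in $c_1(L)$; the large Picard rank of $B$ supplies the flexibility needed for the numerical tuning below.

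For the bundle itself, I would consider a non-trivial extension
\[
0 \to L_1 \to \m{E} \to L_2 \to 0
\]
of two non-isomorphic line bundles on $B$ with $\mu_L(L_1) = \mu_L(L_2)$. Inequivalence of $L_1$ and $L_2$ together with non-triviality of the extension class implies that $\m{E}$ is simple, while the associated graded $L_1 \oplus L_2$ is automatically locally free. The saturated sub-line-bundle $L_1 \hookrightarrow \m{E}$ is a destabiliser of the same slope, so $\m{E}$ is strictly slope semistable.

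The crux is verifying adiabatic slope stability. Using the reduction to the Kuranishi space of $\m{E}$ established earlier in the paper, together with the explicit Chern class formula \eqref{Eq:Fut_expression} for the Donaldson--Futaki invariant, the condition becomes positivity of a polynomial in $k$ whose coefficients are intersection numbers on $B$ involving the classes of $L_1$, $L_2$, the exceptional divisors, and $L$. By modifying the classes $c_1(L_i)$ by combinations of the exceptional divisors of the blowup, one should be able to arrange the leading coefficient of this polynomial to be strictly positive while preserving equality of slopes, ampleness of $L$, and existence of the cscK metric on the base.

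The main obstacle is the simultaneous realisation of all the constraints: equality $\mu_L(L_1) = \mu_L(L_2)$, ampleness of $L$, existence of a cscK metric in $c_1(L)$, non-triviality of $\mathrm{Ext}^1(L_2, L_1)$, and strict positivity of the Donaldson--Futaki polynomial for \emph{every} potentially destabilising saturated subsheaf of every nearby bundle in the Kuranishi space of $\m{E}$. The last point is the most delicate, but is rendered tractable by the finite-dimensionality of the Kuranishi space established earlier and by the fact that the relevant Chern data varies continuously with the deformation parameter, so that a single numerical check performed at $\m{E}$ --- carried out explicitly in \S\ref{sec:concreteeg} for a concrete choice of $n$, of the points $p_i$, and of the line bundles $L_1, L_2, L$ --- will suffice.
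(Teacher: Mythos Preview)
Your overall strategy---apply Theorem~\ref{thm:main} after constructing a simple, sufficiently smooth, strictly semistable bundle over a cscK base with discrete automorphisms, then verify adiabatic slope stability via the formula~\eqref{Eq:Fut_expression}---is exactly the paper's strategy. However, the specific construction you propose diverges from what the paper actually does in \S\ref{sec:concreteeg}, and your deferral to that section is therefore a mismatch.

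The paper does \emph{not} use a rank~$2$ extension of line bundles. It builds a rank~$3$ bundle as a non-trivial extension
\[
0 \to \m{S} \to \m{E} \to \m{Q} \to 0
\]
where $\m{S}$ is a rank~$2$ slope stable bundle pulled back from $\bb{P}^2$ (with $c_1(\m{S})=0$, $c_2(\m{S})=1$) and $\m{Q}$ is a line bundle. The explicit formulae in \S\ref{sec:concreteeg} are derived under the specialisation $n=2$, $r=2$, and the sign of the crucial $a_2$-coefficient hinges on $c_2(\m{S})$ being present and small; for your rank~$2$ proposal $\m{S}=L_1$ is a line bundle, so $c_2(\m{S})=0$ and those formulae do not apply as written. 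Your approach may well succeed (the rank~$2$ case is treated by Keller--Ross, as the paper notes), but \S\ref{sec:concreteeg} does not carry out that check.

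Two further points. First, the role of the blowups is not, as you suggest, to gain Picard rank for ``numerical tuning'' of the $L_i$. The paper performs the entire Donaldson--Futaki computation on $\Bl_p\bb{P}^2$ and then pulls the bundle back to a further blowup purely to kill automorphisms and obtain a cscK base (via the K\"ahler--Einstein metric on the three-point blowup and then Arezzo--Pacard); the intersection numbers are unchanged under pullback, and semistability of the pullback is handled by citing Sibley. Second, you overstate the verification burden: you do not need to check Donaldson--Futaki positivity ``for every nearby bundle in the Kuranishi space''. The moment-map-flow argument of \S\ref{sec:findimsoln} (specifically Lemma~\ref{lem:DFexpansion}) reduces everything to test configurations induced by subbundles of $\m{E}$ itself arising from the eigenspace decomposition of elements of $\mfk$, which here means just the single subbundle $\m{S}$.
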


We now explain the adiabatic slope stability condition. Given a vector bundle $\m{E} \to B$, a subsheaf $\m{S} \to B$ induces a test configuration for the projectivisation, whose central fibre is given by the projectivisation of $\m{S} \oplus \frac{\m{E}}{\m{S}}$, with ample line bundle given by $H + k\pi^*L$.
Adiabatic slope stability essentially consists in the positivity of the Donaldson-Futaki invariant for these test configurations and it is a vector bundle version of Hattori's notion of $\mf{f}$-stability \cite{Hattori_f-stability}.
The assumption that $\m{E}$ is sufficiently smooth means by definition that the graded object of $\m{E}$ is locally free, and it is a technical assumption that allows us to use analytic techniques to prove the result. One consequence of this is that in order to guarantee the existence of cscK metrics, we only have to consider $\m{S}$ such that $\m{S} \oplus \frac{\m{E}}{\m{S}}$ is a vector bundle. Thus this assumption allows us to only check K-stability for test configurations that arise from vector subbundles, rather than subsheaves. A priori, on projective bundles $\mf{f}$-stability requires one to check the positivity of the Donaldson-Futaki invariant on more fibration degenerations than those arising from subsheaves, but ultimately our results show that under the sufficiently smooth hypothesis, the two are equivalent.
We expect the same results to hold without the sufficiently smooth hypothesis, though we then do not expect that it suffices to verify adiabatic slope stability on subbundles of $\m{E}$ only.

\subsection*{Strategy of the proof}
Our technique of the proof of Theorem \ref{thm:main} is quite general; we hope that our work serves as a blueprint to solving analogous ``semistable" perturbative problems.
First, we rely on the deformation theory of vector bundles and we develop new analytic estimates to reduce the cscK equation to a moment map equation in finite dimensional K\"ahler geometry, then we use recently introduced moment map techniques to find a zero of said moment map.
We next explain the technique in more details.

The first step consists of setting up the problem in terms of the deformation theory of the vector bundle and the associated projective bundle: the semistable holomorphic vector bundle $\m{E} \to B$ can be viewed as a deformation of its graded object $\m{E}_0\to B$, where $\m{E}$ and $\m{E}_0$ have the same underlying smooth structure, denoted $E \to 
B$.
More precisely, the $\delbar$-operator of $\m{E}$ can be written as $\delbar_0 + \gamma$, where $\gamma \in \Omega^{0,1}(\End E)$.
Let $h$ be a Hermitian metric on $E$ such that $(h, \delbar_0)$ defines a Hermite-Einstein connection on $\m{E}_0$.
Then $h$ induces a relatively symplectic structure $\omega$ on $\bb{P}(E)$ and the $\delbar$-operators $\delbar_E$ and $\delbar_0$ induce complex structures $J$ and $J_0$ on $\bb{P}(E)$ such that $(\omega, J_0)$ is a relatively K\"ahler metric and $(\omega, J)$ is a deformation of it.

At this point, we can refer to the deformation theory of complex structures, \emph{Kuranishi theory}: the deformations of $J_0$ compatible with $\omega$ and with the projection to the base can be described by a finite dimensional vector subspace $\m{V}$. 
Moreover, this parametrisation is \emph{equivariant} with respect to the group $G$ of automorphisms of $\m{E}_0$, which is a reductive group and thus is the complexification of a maximal compact subgroup $K$.

The equivariance property is what allows us to use the theory of moment maps, for which we rely on Dervan--Hallam's recent approach using universal families \cite{DervanHallam23}.
We consider the deformations of $\bb{P}(\m{E}_0)$ as a $K$-equivariant family $\m{U} \to B \times\m{V}$. Each fibre over $v \in \m{V}$ corresponds to a projective bundle $\bb{P}(\m{E}_v) \to B$ with K\"ahler metric $(\omega+k\omega_B, J_v)$.
Dervan--Hallam prove that the projection onto the Lie algebra of $K$ of the scalar curvature of such a metric is a moment map on $\m{V}$ with respect to the Weil--Petersson metric.

The next step, and the core analytic step in our proof, consists of deforming $(\omega+k\omega_B, J_v)$ adiabatically by adding a K\"ahler potential $\varphi_k$ to the metric such that the scalar curvature itself, rather than only its projection, lies in the Lie algebra of $K$. This is a global equation and we do not, in fact, solve it completely. We first solve a simpler fibrewise equation so that the scalar curvature of every member of the Kuranishi family lands in a finite dimensional vector space, and then we show that that the projection from this vector space to the Lie algebra of $K$ is an isomorphism, see Lemma \ref{Lemma:injective_projection}. In particular, Lemma \ref{Lemma:injective_projection} is key to deducing that the zero of the Dervan--Hallam moment map we obtain on $\m{V}$ is precisely a solution to the cscK equation on the fibre; its proof relies on the properties of the Lie algebra of $K$, it is quite general and we expect it should be possible to apply it to solve similar perturbation problems.
The reason why we solve the fibrewise equation instead of directly solving the global equation is that the global equation is one on a non-compact manifold, due to the non-compactness of the Kuranishi space $\cV$.

It is worth noting that up to this point the proof is unobstructed, i.e.\ it does not use the adiabatic slope stability assumption.
The fact that we are solving a more general equation means we can rely on the same type of perturbative PDE techniques that have been used in earlier adiabatic problems, starting with the works of Hong \cite{hong98} and Fine \cite{fine04}, and then built upon in various generalisations \cite{luseyyedali14, bronnle15, dervansektnan20, dervansektnan21a, Ortu_OSCdeformations}. In particular, we extend to the slope semistable case the techniques developed by Br\"onnle \cite{bronnle15}, who constructed extremal metrics on the projectivisation of certain unstable vector bundles.

The last step of the proof, which is where the \emph{adiabatic slope stability} condition comes in, consists of applying the \emph{moment map flow}, a technique which has been used in analytic approaches to Geometric Invariant Theory (GIT) \cite{GeorgoulasRobbinSalamon_GITbook,ChenSun_CalabiFlow}, and for which we mostly follow \cite{DervanMcCarthySektnan}. In order to run the moment map flow, we need that the Weil--Petersson form obtained from \cite{DervanHallam23} in fact is positive. We do this by relating the Weil--Petersson metric that arises from the moduli theory of constant scalar curvature metrics, as in Fujiki--Schumacher \cite{FujikiSchumacher1990}, with the Weil--Petersson metric that arises from the moduli theory of vector bundles. The moment map flow is the gradient flow of the norm squared of the moment map; we run the flow from the point in $\m{V}$ corresponding to the adiabatically slope stable vector bundle $\m{E}\to B$ and the limit point of the flow induces a test configuration for $\bb{P}(\m{E}) \to B$. By relating the weight of the moment map at the limit point and the Donaldson-Futaki invariant of the induced test configuration, adiabatic slope stability allows us to prove that the point to which the flow converges is in fact a zero of the moment map in the orbit of our original complex structure, i.e.\ a constant scalar curvature K\"ahler metric on $\bb{P}(\cE)$.

The first main difference with our approach compared to previous constructions is that in the classical adiabatic constructions, the constant scalar curvature metric is approximated iteratively by adding a potential to the metric $\omega + k\omega_B$ so that the metric is constant up to the prescribed order. We, however, only perturb the metric so that the scalar curvature lies in the Lie algebra of $K$, up to any prescribed order.
In previous works \cite{dervansektnan21a, Ortu_OSCdeformations}, the authors have developed optimal symplectic connections as a generalisation of Hermite-Einstein connections, which allow to construct cscK and extremal metrics on the total space in adiabatic classes. The linear analysis required to perturb the metric so that the scalar curvature lies in the Lie algebra of $K$ to any required order is similar to the linear analysis involved in these problems. The main difference is that we combine using the linearisation of the scalar curvature operator and the linearisation of the contracted curvature operator on the vector bundle to achieve this, instead of looking at a higher order asymptotic expansion of the linearisation of the scalar curvature operator. This allows us to treat the adiabatic parameter $k$ and the deformation parameter $v$ separately here.

The second main difference with the classical approach to perturbation problems and deformation theory is that traditionally, e.g.\ in the Fujiki--Donaldson proof that the scalar curvature is a moment map, when studying deformations of the K\"ahler structure one fixes either the K\"ahler form or the complex structure and varies the other within the prescribed compatibility condition.
As opposed to that strategy, the Dervan--Hallam theory allows us to change both the complex structure, by varying the base point of the universal family $\m{U} \to \m{V}$, and the K\"ahler form, by adding a potential, at the same time.

Note that in \cite{DervanMcCarthySektnan}, the moment map flow is run first and then the solution is perturbed. This makes the perturbation analysis much more delicate, as one has to have more precise understanding of the linearisation of the relevant operator. We solve an analytic perturbation problem first before applying the moment map flow, which simplifies the analysis. The above two new elements in the approach are what allows us to tackle the problem in this order instead.

\begin{remark}
In \cite{dervansektnan21a, Ortu_OSCdeformations}, solutions to the optimal symplectic connection equation were used in order to guarantee the existence of cscK metrics on certain fibrations. In the case we are considering, where the total space is a projective bundle, this equation reduces to the Hermite-Einstein equation \cite[Proposition 3.17]{dervansektnan21a}. Our construction, and in particular the concrete example given in \S \ref{sec:concreteexample}, shows that having a solution to the optimal symplectic connection equation is \emph{not} a necessary condition for the existence of cscK metrics in adiabatic classes.
From the algebro-geometric point of view, this is saying that the notion of fibration stability introduced by the second author and Dervan in \cite{dervansektnan19b} is not equivalent to Hattori's $\mf{f}$-stability \cite{Hattori_f-stability}, and our example of Corallary \ref{cor:eg} constitutes the first example of a $\mf{f}$-stable variety that is not fibration stable.
\end{remark}

\begin{remark}
During the preparation of this article, we learned that R\'emi Delloque \cite{delloque2024HYM} was using related ideas to study a different problem: when a sufficiently smooth vector bundle that is semistable with respect to some initial polarisation can be equipped with a Chern connection such that it satisfies the Hermite-Einstein condition, upon perturbing the polarisation. His approach also consists in reducing to a finite dimensional problem, before using a moment map flow to obtain a solution when the bundle is perturbed into the stable region. However, one difference is that Delloque's approach does not rely on the Dervan--Hallam theory for obtaining moment maps; this makes invoking the moment map flow to relate the existence of a solution to the stability criterion more involved.
\end{remark}

\subsection*{Outline} In Section \ref{sec:background}, we recall some background on cscK metrics, K-stability and the differential geometry of vector bundles and projective bundles. We also introduce adiabatic slope stability. In Section \ref{sec:linear}, we discuss the linear theory relevant to our problem. In Section \ref{sec:reduction}, we reduce the problem of finding a cscK metric to a finite dimensional problem, which we then subsequently solve in Section \ref{sec:findimsoln} under the necessary hypothesis of adiabatic slope stability. Finally, in Section \ref{sec:example}, we give a formula for the adiabatic slope stability criterion and use it to produce examples to which the construction applies.

\subsection*{Acknowledgments} The idea for this project first came during the summer school \textit{Recent advances in K\"ahler geometry}; we thank Masafumi Hattori for many discussions we had in that instance and since, and the organisers for inviting us. We especially thank Ruadha\'i  Dervan for discussions on this and related perturbation problems. 
We also thank Leticia Brambila-Paz, R\'emi Delloque, Julien Keller, Carlo Scarpa and Carl Tipler for many useful conversations pertaining to this work.
LMS is funded by a Marie Sk\l{}odowska-Curie Individual Fellowship, funded from the European Union's Horizon 2020 research and innovation programme under grant agreement No 101028041, and a Starting Grant from the Swedish Research Council (grant 2022-04574).
AO acknowledges support from the INdAM ``National Group for Algebraic and Geometric Structures, and their Applications" (GNSAGA).
The authors would like to thank the Isaac Newton Institute for Mathematical Sciences, Cambridge, for support and hospitality during the programme ``New equivariant methods in algebraic and differential geometry" where work on this paper was undertaken. This work was supported by EPSRC grant no EP/R014604/1.

\section{Preliminaries}\label{sec:background}
In this section, we recall some results on K\"ahler manifolds and K-stability, as well as on vector bundles, their deformation theory and their projectivisation.
We then introduce the notion of adiabatic slope stability.

\subsection{K\"ahler metrics with constant scalar curvature}
Let  $(M,L)$ be a smooth projective manifold endowed with an ample line bundle.
We consider the polarisation to be a fixed datum of the various problems we describe.
Let $\omega$ be a K\"ahler form on $M$ in the first Chern class of $L$ and let $J$ be the complex structure of $M$. We denote by $g = g(\omega,J)$ the Riemannian metric on $M$ induced by $J$ and $\omega$, i.e.\
\[
g(\cdot, \cdot) = \omega(\cdot, J\cdot).
\]
The \emph{Ricci curvature} of $\omega$ is the two-form
\[
\mrm{Ric}(\omega, J) = -\frac{i}{2\pi} \del_J \delbar_J \mrm{log} \ \omega^n.
\]
The \emph{scalar curvature} of the K\"ahler metric $(\omega,J)$ is a smooth function on $M$ defined as the contraction of the Ricci curvature:
\[
\mrm{Scal}(\omega,J) := \Lambda_{\omega} \mrm{Ric}(\omega, J).
\]

We are interested in \emph{special} K\"ahler metrics, where the scalar curvature is subject to certain constraints. In particular, we consider K\"ahler metrics with constant scalar curvature, where the constant is given by the intersection product
\[
\widehat{S} = \frac{n \ c_1(M) \cdot c_1(L)^{n-1}}{c_1(L)^n}.
\]
In particular, $\widehat{S}$ is a \emph{topological constant} fixed by the polarisation.
Fixing a reference K\"ahler metric $\omega \in c_1(L)$, we can then describe the linearisation of $\mrm{Scal}(\omega)$ in the direction of a K\"ahler potential $\varphi$.

\begin{definition}
Let $\m{D} : C^\infty(M, \bb{C}) \to \Omega^{0,1}(T^{1,0}M)$ be the operator
\[
\m{D}(\varphi) = \delbar (\nabla^{1,0}_g \varphi).
\]
The Lichnerowicz operator is the composition $\m{D}^*\m{D}$, where $\m{D}^*$ is the adjoint defined with respect to the $L^2(g)$-inner product.
\end{definition}
It can be written explicitly as follows:
\[
\m{D}^*\m{D}(\varphi) = \Delta^2_g(\varphi) + \langle \mrm{Ric}(\omega), i \del\delbar\varphi\rangle +\frac{1}{2} \langle \nabla \mrm{Scal}(\omega), \nabla \varphi\rangle.
\]
The Lichnerowicz operator is a 4th-order elliptic operator. Its kernel, which by compactness is the kernel of $\m{D}$, coincides with the space of real holomorphy potentials on $M$. 
The linearisation of the scalar curvature in the direction of a K\"ahler potential $\varphi$ can be written in terms of the Lichnerowicz operator as
\begin{equation}\label{eq:linearisation_scalar_curvature}
-\m{D}^*\m{D}(\varphi) + \frac{1}{2}\langle \nabla \mrm{Scal}(\omega), \nabla \varphi \rangle.
\end{equation}
In particular, the linearisation at a constant scalar curvature metric is given exactly by the Lichnerowicz operator.

\subsection{K-stability}
We recall the notion of K-stability, introduced by Tian \cite{Tian_KahlerEinstein} and Donaldson \cite{DonaldsonToric02} to be an analogue for polarised varieties to the Hilbert-Mumford criterion for GIT.
\begin{definition}
Let $(X,L)$ be a polarised variety of dimension $n$. A test configuration for $(X,L)$ is the data of
\begin{enumerate}
\item a variety $\m{X}$ with a $\bb{C}^*$-equivariant flat proper morphism to $\bb{C}$;
\item a relatively ample line bundle $\m{L} \to \m{X}$ together with a lift of the $\bb{C}^*$-action to it;
\item an isomorphism $(\m{X}_1, \m{L}_1) \simeq (X, rL)$ for some $r>0$.
\end{enumerate}
We say that $(\m{X},\m{L})$ is a product test configuration if $(\m{X}, \m{L}) \simeq (X,rL) \times \bb{C}$, with a possibly nontrivial $\bb{C}^*$-action, and is a trivial test configuration if $(\m{X}, \m{L}) \simeq (X,L) \times \bb{C}$ with trivial $\bb{C}^*$-action.
\end{definition}
Given a test configuration, one associates a numerical invariant as follows. Consider the following expansions for the dimension of $H^0(\m{X}_0, j\m{L}_0)$ and for the weight $w_j$ of the induced action of $\bb{C}^*$ on $H^0(\m{X}_0, j\m{L}_0)$:
\[
\begin{aligned}
\dim H^0(\m{X}_0, j\m{L}_0) &= a_0j^n + a_{1}j^{n-1} + O\left(j^{n-2}\right)\\
w_j &= b_0 j^{n} + b_1j^{n-1} + O\left(j^{n-2}\right).
\end{aligned}
\]
The \emph{Donaldson-Futaki} invariant, introduced by Donaldson in \cite{DonaldsonToric02}, is the number
\[
\mrm{DF}(\m{X}, \m{L}) := \frac{a_1b_0-a_0b_1}{a_0^2}.
\]
It is related to the classical Futaki invariant by the following result, first by proved by Donaldson in the projective case \cite[Proposition 2.2.2]{DonaldsonToric02} and extended by Legendre \cite[Theorem 1.1]{LegendreLocalisation21}. 
\begin{theorem}\label{Thm:classical_Futaki_equal_DF}
Assume that the central fibre $\m{X}_0$ is smooth. Let $\Fut(J\xi)$ be the classical Futaki invariant
\[
\Fut(J\xi) = \int_{\m{X}_0} f (\Scal(\omega)-\widehat{S}) \omega^n
\]
where $\xi = \mrm{grad}^\omega f$ is the Hamiltonian vector field generating the $\bb{C}^*$-action, i.e.\ the symplectic gradient of $f$.
Then the Donaldson-Futaki invariant of the test configuration $(\m{X}, \m{L})$ satisfies
\[
\frac{\mrm{DF}(\m{X}, \m{L})}{n!} = -\pi\Fut(J\xi)
\]
\end{theorem}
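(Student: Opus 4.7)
The plan is to apply Hirzebruch--Riemann--Roch, in both its classical and equivariant forms, to the smooth central fiber $\m{X}_0$, and to match the coefficients of the resulting expansions to the Futaki invariant. Since $\m{L}_0$ is ample on the projective variety $\m{X}_0$, Serre vanishing gives $H^i(\m{X}_0, j\m{L}_0) = 0$ for $i > 0$ and $j \gg 0$, so that $\dim H^0(\m{X}_0, j\m{L}_0) = \chi(\m{X}_0, j\m{L}_0)$ and $w_j$ coincides with the weight of the induced $\bb{C}^*$-representation on the holomorphic Euler characteristic. Classical HRR then immediately yields
\[
a_0 = \frac{c_1(\m{L}_0)^n}{n!}, \qquad a_1 = \frac{c_1(T\m{X}_0) \cdot c_1(\m{L}_0)^{n-1}}{2(n-1)!}.
\]

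For the weight $w_j$, I would use equivariant HRR in the Cartan model of $S^1$-equivariant cohomology with parameter $s$. Fix an $S^1$-invariant K\"ahler form $\omega \in c_1(\m{L}_0)$, let $f$ be the Hamiltonian for $J\xi$ (so $\iota_{J\xi}\omega = df$), and let $\theta$ be the Hamiltonian for the induced action on $-K_{\m{X}_0}$ equipped with the Hermitian metric induced by $\omega$ (so $d\theta = \iota_{J\xi}\mrm{Ric}(\omega)$, with $\theta$ canonically pinned down by the equivariant Chern--Weil construction). Then $c_1^{eq}(\m{L}_0) = \omega + sf$ and $c_1^{eq}(T\m{X}_0) = \mrm{Ric}(\omega) + s\theta$, and equivariant HRR gives
\[
\chi^{eq}(e^s) = \int_{\m{X}_0} e^{j(\omega+sf)}\,\mrm{Td}^{eq}(T\m{X}_0).
\]
Extracting $w_j = \partial_s\chi^{eq}(e^s)|_{s=0}$ and collecting the form-degree-$2n$ contributions expresses $b_0$ and $b_1$ as integrals involving $f$, $\theta$, $\omega$ and $\mrm{Ric}(\omega)$; in particular $b_1$ receives two contributions, one from the linear term of $\mrm{Td}^{eq}$, producing a piece proportional to $\int_{\m{X}_0}\theta\,\omega^n$, and one from the cross-term between $e^{j(\omega + sf)}$ and the classical Todd factor, producing a piece proportional to $\int_{\m{X}_0} f\,\mrm{Ric}(\omega) \wedge \omega^{n-1}$.

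The final step is to substitute these expressions into $\mrm{DF}(\m{X}, \m{L})/n! = (a_1 b_0 - a_0 b_1)/(n!\,a_0^2)$, apply the pointwise identity $n\,\mrm{Ric}(\omega)\wedge \omega^{n-1} = \mrm{Scal}(\omega)\omega^n$ to recognise the scalar curvature, and use the classical Futaki--Mabuchi identity to evaluate $\int_{\m{X}_0}\theta\,\omega^n$ in terms of $\int f\,\mrm{Scal}(\omega)\,\omega^n$ and $\int f\omega^n$; the combination then collapses to a multiple of $\Fut(J\xi) = \int f(\mrm{Scal}(\omega) - \widehat{S})\omega^n$. The hard part is precisely this identification: although $\theta$ is only determined up to an additive constant by $d\theta = \iota_{J\xi}\mrm{Ric}(\omega)$, the equivariant Chern--Weil recipe fixes it uniquely as the moment map for the lifted action on $-K_{\m{X}_0}$, and a Bochner-type integration-by-parts argument then produces the matching identity with the correct sign. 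The explicit factor of $-\pi$ in the stated identity is obtained by reconciling the paper's normalisation $\omega = iF_{h^*}$ with the Chern--Weil normalisation $\omega = \frac{i}{2\pi}F_{h^*}$ implicit in the HRR formalism; this bookkeeping of $2\pi$ factors together with the Futaki--Mabuchi identification is where all of the analytic content of the proof is concentrated.
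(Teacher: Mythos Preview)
The paper does not give its own proof of this theorem; it is quoted as a known result, attributed to Donaldson \cite[Proposition~2.2.2]{DonaldsonToric02} in the projective case and to Legendre \cite[Theorem~1.1]{LegendreLocalisation21} in the general K\"ahler setting. Your approach via equivariant Hirzebruch--Riemann--Roch is essentially Donaldson's original argument and is correct in outline. The step you flag as ``the hard part'' is the identification of the canonically normalised Hamiltonian $\theta$ for the lifted action on $-K_{\m{X}_0}$: the equivariant Chern--Weil recipe gives $\theta = \Delta_\omega f$ (up to sign convention), since the lift of the vector field to the anticanonical bundle acts by its divergence, and once this is inserted the algebra closes without further integration by parts. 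This is presumably what you mean by the ``Futaki--Mabuchi identity,'' though that name is nonstandard.

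Legendre's proof, by contrast, avoids Riemann--Roch entirely and proceeds by equivariant localisation on the central fibre. This is necessary in the genuinely K\"ahler (non-projective) case, where there is no line bundle $\m{L}_0$ to which one can apply an index theorem, only a K\"ahler class. That localisation formula is in fact the tool the paper itself invokes later, in \S\ref{sec:example}, to compute Donaldson--Futaki invariants explicitly. So while your argument recovers the projective case, it does not cover the full generality the paper is citing; for the purposes of this paper, however, only the projective case is ever used.
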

If $(\m{X}, \m{L})$ is a test configuration, one can define the \emph{normalisation} of $(\m{X}, \m{L})$ by taking the normalisation $\widetilde{\m{X}}$ of $\m{X}$ and the pullback of $\m{L}$ to $\widetilde{\m{X}}$. The normalisation is again a test configuration for $(X, L)$ \cite[\S 5]{RossThomas07}.
\begin{definition}[\cite{DonaldsonToric02,Tian_KahlerEinstein}]
A polarised variety $(X, L)$ is
\begin{enumerate}
\item \emph{K-semistable} if $\mrm{DF}(\m{X}, \m{L}) \ge 0$ for all test configurations $(\m{X}, \m{L})$ for $(X, L)$;
\item \emph{K-polystable} if it is K-semistable and $\mrm{DF}(\m{X}, \m{L})$ vanishes only if $(\m{X}, \m{L})$ normalises to a product test configuration;
\item \emph{K-stable} if it is K-semistable and $\mrm{DF}(\m{X}, \m{L})$ vanishes only if $(\m{X}, \m{L})$ normalises to the trivial test configuration.
\end{enumerate}
\end{definition}

\subsection{Differential geometry of vector bundles}\label{Subsec:diffgeomvectorbundles}

Let $\m{E} \to B$ be a holomorphic vector bundle of rank $r+1$. Assume that $\m{E}\to B$ is strictly slope semistable with respect to an ample line bundle $L \to B$, and let $\omega_B \in c_1(L)$ be a K\"ahler metric. Let 
$$
0=\m{S}_0 \subset \m{S}_1 \subset \m{S}_2 \subset \ldots \subset \m{S}_m = \m{E}
$$
be a Jordan--H\"older filtration for $\m{E}$. Let $\m{Q}_i = \frac{\m{S}_i}{\m{S}_{i-1}}$ be the successive quotients in the Jordan--H\"older filtration. In general, the $\m{Q}_i$ are torsion-free sheaves, but we assume that $\cE$ is \emph{sufficiently smooth}, i.e.\ that the $\m{Q}_i$ are all locally free. Hence the $\m{Q}_i$ are slope stable vector bundles with the same slope as $\m{E}$. The graded object
\[
\m{E}_0 = \bigoplus_{i=1}^m \m{Q}_i
\]
of $\m{E}$ is then locally free and slope polystable with respect to $L$. In particular $\m{E}_0$ and $\m{E}$ are isomorphic as smooth vector bundles, with different $\db$-operators: let $\db_0$ be the $\db$-operator of $\m{E}_0$ and $\db$ that of $\m{E}$.
Letting $E$ denote the underlying smooth vector bundle, we may write 
\begin{align*}
\cE_0 =& (E, \db_0) \\
\cE =& (E, \db).
\end{align*}
Analogously, we denote the underlying smooth bundles throughout by $S_j$ and $Q_j$.
Being slope polystable, by the Hitchin--Kobayashi correspondence of Donaldson, Uhlenbeck and Yau \cite{Donaldson1985, UhlenbeckYau_HYMstability}, $\m{E}_0$ admits a Hermite-Einstein metric $h$, i.e.\
\[
\Lambda_{\omega_B} F_{\nabla_0} = c \Id,
\] 
where $\nabla_0$ is the Chern connection induced by $h$ and $\delbar_0$ on $E$ and $c$ is a constant given by the slope of $\m{E}_0$ divided by the volume of $\omega_B$.

The linearisation of the Hermite-Einstein operator will play an important role in our work, and this is given by the following result.
\begin{lemma}\label{Lemma:linearisationHE}
The linearisation of the Hermite-Einstein operator of a Chern connection $\nabla$ when changing the $\delbar$-operator is given by the corresponding Laplacian operator $\Delta$ on endomorphisms of $\m{E}$.
\end{lemma}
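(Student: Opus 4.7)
The plan is to linearise the map $\bar\partial \mapsto \Lambda_{\omega_B} F_{\nabla(\bar\partial, h)} - c\,\Id$ at $\bar\partial_0$ while keeping the Hermitian metric $h$ fixed. Because the Chern connection is uniquely determined by the pair $(\bar\partial, h)$, unitarity forces a variation $\delta \bar\partial = \gamma \in \Omega^{0,1}(\End E)$ to pair with $\delta \partial_\nabla = -\gamma^{*}$, where $\gamma^{*} \in \Omega^{1,0}(\End E)$ is the Hermitian adjoint of $\gamma$. Differentiating $F_\nabla = \nabla^2$ and extracting the $(1,1)$-component then produces $\delta F = \partial_{\nabla_0} \gamma - \bar\partial_0 \gamma^{*}$.

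Contracting with $\omega_B$ and using the K\"ahler identities for Chern connections, namely $[\Lambda_{\omega_B}, \bar\partial_0] = -i\partial_{\nabla_0}^*$ and $[\Lambda_{\omega_B}, \partial_{\nabla_0}] = i\bar\partial_0^*$, together with the fact that $\Lambda_{\omega_B}$ annihilates forms of bidegree $(0,1)$ or $(1,0)$, the linearisation of the Hermite--Einstein operator takes the form
\[
\delta(\Lambda_{\omega_B} F_\nabla) = i\bar\partial_0^* \gamma + i\partial_{\nabla_0}^* \gamma^{*}.
\]

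To identify this expression with a Laplacian on $\End \mathcal{E}$, I would parametrise the relevant deformations by $\gamma = \bar\partial_0 s$ for an endomorphism $s \in \Gamma(\End E)$; this corresponds to deformations coming from the complexified gauge action, which is the natural class for the Hermite--Einstein problem. Then $\gamma^{*} = \partial_{\nabla_0} s^{*}$, and the linearisation becomes $i(\bar\partial_0^* \bar\partial_0 s + \partial_{\nabla_0}^* \partial_{\nabla_0} s^{*}) = i(\Delta_{\bar\partial_0} s + \Delta_{\partial_{\nabla_0}} s^{*})$. The Bochner--Kodaira identity $\Delta_{\bar\partial_0} - \Delta_{\partial_{\nabla_0}} = [i\Lambda_{\omega_B} F_{\nabla_0}, \cdot]$ shows that at the Hermite--Einstein connection $\nabla_0$, where $\Lambda_{\omega_B} F_{\nabla_0} = c\,\Id$ commutes with every endomorphism, these two Laplacians coincide. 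Hence the linearisation reduces, up to a universal constant, to the Laplacian $\Delta$ on endomorphisms of $\mathcal{E}$.

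The main technical obstacle is largely bookkeeping: keeping careful track of signs and normalisations from the K\"ahler identities, the Chern curvature convention, and the reality conditions imposed by the Hermitian structure. One should also observe that the identification $\gamma \leftrightarrow s$ via $\gamma = \bar\partial_0 s$ is only well-defined modulo $\ker \bar\partial_0 = H^0(\End \mathcal{E}_0)$, but the Laplacian annihilates this kernel, so the induced operator on the quotient is precisely $\Delta$, consistent with the fact that holomorphic endomorphisms correspond to trivial deformations of the Hermite--Einstein structure.
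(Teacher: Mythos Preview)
The paper states this lemma without proof, as a standard fact from the theory of Hermite--Einstein connections. Your outline is the correct standard route: vary the Chern connection under $\delta\bar\partial=\gamma$, use unitarity to get $\delta\partial_\nabla=-\gamma^*$, differentiate the curvature, and apply the K\"ahler identities to reach $\delta(\Lambda_{\omega_B}F_\nabla)=i(\bar\partial_0^*\gamma+\partial_{\nabla_0}^*\gamma^*)$. Specialising to $\gamma=\bar\partial_0 s$ is also the right move, since this is exactly how the lemma is used later (the paper linearises $s\mapsto\Lambda_{\omega_B}F_{\exp(s)\cdot\bar\partial_v}$).

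There is, however, one avoidable detour that weakens your argument. After writing the linearisation as $i(\Delta_{\bar\partial_0}s+\Delta_{\partial_{\nabla_0}}s^*)$, you invoke the Bochner--Kodaira identity at the Hermite--Einstein connection to identify the two Laplacians. But this expression still involves both $s$ and $s^*$, so even at a Hermite--Einstein point you only obtain $i\Delta_{\bar\partial_0}(s+s^*)$, i.e.\ the Laplacian of the Hermitian part of $s$. The cleaner observation is that the natural parameter is a \emph{Hermitian} endomorphism $s=s^*$ (these are the directions transverse to the unitary gauge orbit inside the complex gauge orbit, equivalently changes of Hermitian metric). With $s=s^*$ one has, on sections,
\[
i(\Delta_{\bar\partial_0}s+\Delta_{\partial_{\nabla_0}}s)=i(\bar\partial_0^*\bar\partial_0+\partial_{\nabla_0}^*\partial_{\nabla_0})s=i\,\Delta_d\,s,
\]
since the cross terms $\partial^*\bar\partial$ and $\bar\partial^*\partial$ vanish on $0$-forms by type reasons. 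This identification requires \emph{no} Hermite--Einstein hypothesis, which is important: the paper applies the lemma at the perturbed operators $\bar\partial_v$ (see Propositions~\ref{prop:bundleLaplacian} and~\ref{prop:approxsolns}), not only at $\bar\partial_0$. Your Bochner--Kodaira step, as written, would only justify the lemma at the central Hermite--Einstein point.
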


The space $\cJ_E$ of holomorphic structures on $E$ compatible with $h$ is an affine space modelled on $\Omega^{0,1}(\End E)$, so $\db_0$ and $\db$ are related by
$$
\db = \db_0 + \gamma
$$
for some $\gamma \in \Omega^{0,1}(\End E)$.
We can decompose $\gamma = \sum_{i,j} \gamma_{i,j}$ with $\gamma_{i,j} \in \Omega^{0,1} (Q_j^* \otimes Q_i)$. Moreover, since the $\m{S}_j$ are holomorphic subbundles of $\cE$ and the holomorphic structure of each $\m{Q}_j$ is the holomorphic structure as a quotient $\frac{\m{S}_j}{\m{S}_{j-1}}$, all $\gamma_{i,j}$ with $i \geq j$ vanish. Thus the matrix $\gamma = [\gamma_{i,j}]$ is strictly upper-triangular.

\subsubsection{Deformation theory}\label{subsubsec:kuranishi_vb}
Let $\m{J}_E$ be the space of pseudo-holomorphic structures on $E \to B$. 
By taking a slice of the action of the complex gauge group on $\m{J}_E$ at $\delbar_0$, we consider the deformation space of the holomorphic structure $\delbar_0$
\[
H^1(B, \End \cE_0)
\]
which can be identified with the finite dimensional subspace of $\Omega^{0,1}(\End E)$ consisting of harmonic representatives. In particular, these are first-order integrable deformations.

Let $G = \Aut\cE_0$ be the group of holomorphic bundle automorphisms of $\cE_0$, and let $K$ be the subgroup of unitary such automorphisms. Note that $G$ is the complexification of $K$.
The Lie algebra $\mathfrak{g}$ of $G$ is the kernel of the Laplacian on $\End \cE_0$, and the Lie algebra $\mathfrak{k}$ of $K$ is the restriction of the kernel to endomorphisms that are skew-Hermitian with respect to $h$.
In particular, the group $G$ is a subgroup of the complex gauge group, which acts on the space of $\db$-operators via conjugation. If $\db$ is $\db$-operator and $f \in \GL(E)$ is a complex gauge transformation, this is given by
$$
f \cdot \db = f^{-1} \circ \db \circ f.
$$
From the deformation theory of vector bundles \cite[\S 7.2, 7.3]{Kobayashi_book}, there exists a neighbourhood $\m{V}$ of the origin in $H^1(B, \End \cE_0)$ and a holomorphic injective map
\begin{equation}\label{eq:Kuranishimap_vb}
\widetilde \Phi : \cV \to \cJ_E,
\end{equation}
such that $\widetilde \Phi (0) = \db_0$.
The group $G$ admits a local action on the space $\cV$ making the map $\widetilde \Phi : \cV \to \cJ_E$ $G$-equivariant.
Since we are including also non-integrable $\delbar$-operators, the space $\m{V}$ is a ball in $H^1(B, \End \cE_0)$.

The holomorphic operator $\delbar$ of $\m{E}\to B$ is then a deformation of $\delbar_0$ such that it is represented by a point $v \in \m{V}$ for which there exists a one-parameter subgroup $\bb{C}^*$ of $K$ with $0 \in \overline{\bb{C}^*\cdot v}$.
Through the map \eqref{eq:Kuranishimap_vb}, this orbit defines a family $\m{F} \to B \times \m{V}$ that represents the degeneration of $\m{E} \to B$ to the graded object.
In particular, we have a family of $\delbar$-operators
\[
\delbar_{v} = \delbar_0 + \gamma_v,
\]
where $v$ runs in a $\bb{C}^*$-orbit whose closure contains the origin.
Moreover, $h$ is also a Hermitian metric on $\m{E}$ and the degeneration of $\m{E}\to B$ to $\m{E}_0 \to B$ induces a family of Chern connections $\nabla_v = \nabla(h, \delbar_v)$.
The curvature of the connection $\nabla_v$ is related to that of $\nabla_0$ as follows \cite[\S4.3]{huybrechts05}
\begin{equation}\label{Eq:expansion_curvature_vb}
F_v = F_0 + \dd_0(\gamma_v - \gamma_v^*) + (\gamma_v - \gamma_v^*)\wedge (\gamma_v - \gamma_v^*).
\end{equation} 

\subsection{Projectivised vector bundles}
\label{sec:projvb}
Let $\pi:X = \bb{P}(\m{E}) \to B$ be the projectivisation of $\m{E} \to B$.
Let $H = \m{O}_{\bb{P}(\m{E})}(-1)^*$ be the relatively ample line bundle over $X$ such that its restriction to each fibre $X_b$ of $\pi$ is $\m{O}_{\bb{P}(\m{E}_b)}(1)$. We also fix an ample line bundle $L$ over the base.

For each $k \gg 0$, an ample line bundle over $X$ is given by
\[
H + k\pi^*L.
\]
In the following, we will often omit the pullback from the notation and write $H + kL$ with a slight abuse of notation.

The $\delbar$-operator on $\m{E} \to B$ induces an almost complex structure $J$ on the underlying smooth manifold $M$ of $X$. Moreover, the fixed hermitian metric $h$ on $E$ induces a hermitian metric on $\mathcal{O}(1)$; the curvature of the Chern connection obtained by $h$ and $\delbar$ gives rise to a relatively K\"ahler metric
\[
(\omega = i F_\nabla, J)
\]
on $M \to B$.
Moreover, the restriction of $(\omega, J)$ on each fibre of $\pi$ is a Fubini-Study metric on the fibre.
For each $k \gg 0$, a K\"ahler metric on $X$ is given by
\[
\omega_k = \omega+ k\omega_B
\]
in the \emph{adiabatic class} $c_1(H) + kc_1(L)$. The volume form of $\omega_k$ expands as
\begin{align}\label{eq:adiabaticvolform}
\omega_k^{n+r} = {n+r \choose n} k^n \omega^r \wedge \omega_B^n + O(k^{n-1}).
\end{align}

\subsubsection{Kuranishi theory}\label{subsubsec:kuranishi_proj}
The fibration $X \to B$ degenerates to the projectivisation $\bb{P}(\m{E}_0) \to B$ through the degeneration family $\varpi:\m{X} = \bb{P}(\m{F}) \to B\times \m{V}$.
We denote the central fibration of the family $\varpi$ by $\pi_0:\m{X}_0 = \bb{P}(\m{E}_0) \to B$.

More precisely, let $\cJ_\pi$ denote the space of almost complex structures on $M$, compatible with $\omega$ and which preserve the projection $\pi$ to $B$. There is a map $\iota : \cJ_E \to \cJ_\pi$, and we thus obtain a map
\begin{equation}\label{eq:Kuranishimap_proj}
\Phi : \cV \to \cJ_\pi,
\end{equation}
by composing $\widetilde{\Phi}$ \eqref{eq:Kuranishimap_vb} with $\iota$. Moreover, the map $\Phi$ is equivariant with respect to $G$.
We refer to $\m{V}$ as the \emph{Kuranishi space} about $\m{E}_0 \to B$ and to $\widetilde\Phi$ as the \emph{Kuranishi map}, as they are obtained from a fibrewise version of the deformation theory of complex structures of Kuranishi \cite{Kuranishi_family_cpx_str}, \cite[Theorem 4.7]{Ortu_OSCdeformations}.

There is a universal family over $\cJ_\pi$, which, as a smooth manifold, is simply $\cJ_\pi\times M$. Pulling back the universal family via $\Phi$ gives a universal family
$$
\pi_{\m{U}}:\cU \to \cV.
$$
The family $\m{U}$ is diffeomorphic to $\m{V}\times M$, thus it also admits a projection
\[
\m{U} \to B \times \m{V}
\]
and a projection
\[
\mrm{pr}_2 : \m{U} \to M.
\]
There is then an induced action of $G$ on the image of $\cJ_E$ via $\iota$ and so on $\cU$, making $\cU \to \cV$ a $G$-equivariant holomorphic map.

Finally, after potentially shrinking $\cV$, we can assume that $\omega_k$ is K\"ahler with respect to the holomorphic structure $\Phi(v)$ for every $v \in \cV$ for all sufficiently large $k$.
Therefore, we have a family of K\"ahler structures
\[
(\omega_k, J_v)
\]
on each $X_v \to B$.
Moreover, $\omega_k$ induces a relatively K\"ahler metric, which we denote $\omega_{\m{U},k}$, on the universal family $\cU$, for all sufficiently large $k$.

\subsubsection{Splitting of the function space}
The relative K\"ahler form $\omega$ on $X$ induces a splitting of the function space $C^\infty(X, \bb{R})$ as
\[
C^\infty(X) = C^\infty(B) \oplus C^\infty_0(X),
\]
where the $C^\infty(B)$-part is obtained taking the fibre integral
\[
f \mapsto \int_{X/B}f \omega^r
\]
and $C^\infty_0(X)$ denotes the fibrewise mean-value zero functions.
Furthermore, we can split
\[
C^\infty_0(X) = C^\infty_W(X) \oplus C^\infty_R(X),
\]
where $C^\infty_W(X)$ denotes the space of \emph{fibrewise holomorphy potentials} on $\m{X}_0 \to B$, which coincides with the kernel of the vertical Lichnerowicz operator $\m{D}_{V}^*\m{D}_V$. For a general holomorphic submersion, these can be identified with the smooth sections of a finite rank vector bundle $W \to B$, with the identification depending on the relatively K\"ahler metric on $X$.
The space $C^\infty_R(X)$ is given by the $L^2(\omega)$-orthogonal complement to $C^{\infty}_W(X)$ in $C^\infty_0(X)$.

We next describe the space $C^\infty_W(X)$ of fibrewise holomorphy potentials, following \cite[\S2]{luseyyedali14}.
Suppose $h$ is a hermitian metric on $E$, which in our case we take to be the Hermite-Einstein metric for $\cE_0$. For every $x\in E_b \setminus \{ 0 \}$, we then get an endomorphism $\lambda_x$ of the fibre $E_b$ of $E \to B$ via
$$
\lambda_x (-) = \frac{x \otimes h(-, x)}{\|x\|^2_h}.
$$ 
Note that $\lambda$ is scale-invariant. For every $\sigma \in \End(E)$, the map
$$
x \mapsto - \tr (\lambda_x \circ \sigma)
$$ 
gives rise to a function on $E_b \setminus \{ 0 \}$, which moreover is scale-invariant. Thus this descends to a function $h_\sigma$ on $\P(E_b)$, defining a map
\[
\begin{aligned}
m : \Gamma(\End E) &\to C^{\infty}(X)\\
\sigma &\mapsto h_\sigma.
\end{aligned}
\]
Note that $m(\Id_E) = -1$, so, since $m$ is $C^{\infty}(B)$-linear, $m(f \cdot \Id_E) = -f$ for all $f \in C^{\infty}(B)$. When restricting to the space $\End_0 E$ of fibrewise trace-free endomorphisms of $E$, the functions $h_{\sigma}$ are then of fibrewise average $0$. Moreover, since the induced metric on the fibres is a Fubini-Study metric one can, from the formula for the moment map for the action of $U(r+1)$ on $\mathbb{P}^r$, show that the $h_{\sigma}$ are in fact fibrewise holomorphy potentials on $\bb{P}(\cE_v)$ for any $v \in \cV$.

Note also that the map $m: \Gamma(\End_0 E) \to C^{\infty}_W$ is a map of Lie algebras. First, since trace and (post)-composition are linear maps, the map $m$ is linear. Next, for $\sigma \in \Gamma(\End_0 E)$, let $\xi_{\sigma}$ denote the corresponding vector field on $\P(E)$, which then satisfies $-dh_{\sigma} = \iota_{\xi_{\sigma}} \omega,$ where $\omega$ is the fibrewise Fubini-Study metric induced by the hermitian metric $h$ on $E$. We then have
\begin{align*}
dh_{[\sigma, \tau]} =& -\iota_{\xi_{[\sigma, \tau]}} \omega \\
=& - \iota_{[\xi_{\sigma},\xi_{\tau}]} \omega \\
=& d \{ h_{\sigma}, h_{\tau} \},
\end{align*}
where in the second equality we use that the infinitesimal action $\sigma \mapsto \xi_\sigma$ is a Lie algebra homomorphism.
Here $\{ h_{\sigma}, h_{\tau} \}$ denotes the Poisson bracket of $h_{\sigma}$ and $h_{\tau}$, which is defined as the function $h$ such that $dh = \iota_{[\xi_{\sigma},\xi_{\tau}]} \omega$. This does not necessarily define $h$ uniquely as a function on $X$, since $\omega$ is only relatively K\"ahler, but it still defines $h$ uniquely when requiring $h$ to be in $C^{\infty}_W$.

Summing up, we have the following result.
\begin{lemma}\label{Lemma:CinftyE_correspondence}
There exists a one-to-one correspondence of Lie algebras
\begin{equation}\label{Eq:CinftyE_correspondence}
m : \Gamma(\End_0 E) \to C^{\infty}_W,
\end{equation}
where the Lie algebra structure on $C^{\infty}_W$ is given by the Poisson bracket.
\end{lemma}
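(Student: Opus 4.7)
The paragraph preceding the lemma already does most of the work: it verifies that $m$ is $\bb{C}$-linear and that $m([\sigma,\tau])=\{m(\sigma),m(\tau)\}$ as elements of $C^\infty_W$ (with the Poisson bracket induced by the relative K\"ahler form $\omega$). Thus the plan is to show the remaining content, namely that $m$ is bijective.

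The first step is to reduce to a fibrewise statement. The hermitian metric $h$, the rank-one projector $\lambda_x$, the trace, and the relative Fubini--Study structure on $\pi \colon \P(E) \to B$ are all $C^\infty(B)$-linear constructions, so $m$ is $C^\infty(B)$-linear. Consequently, bijectivity of $m$ reduces to bijectivity of the fibrewise map
\[
m_b : \End_0(E_b) \longrightarrow C^\infty_W(\P(E_b))
\]
for each $b \in B$, where $(\P(E_b),\omega|_{\P(E_b)})$ is biholomorphically and isometrically $(\P^r,\omega_{FS})$ via the unitary frame given by $h$, and $C^\infty_W(\P(E_b))$ is the space of mean-zero holomorphy potentials on this Fubini--Study $\P^r$.

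Next I would prove injectivity directly from the definition. If $h_\sigma \equiv 0$ on $\P(E_b)$, then for every nonzero $x \in E_b$ we have $\tr(\lambda_x\circ\sigma)=h(\sigma x,x)/\norm{x}_h^2=0$, hence $h(\sigma x,x)=0$ for all $x \in E_b$. Polarisation of this Hermitian form gives $\sigma=0$, so $m_b$ is injective.

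For surjectivity I would invoke the classical description of holomorphy potentials on $(\P^r,\omega_{FS})$. Since $\P^r$ has no nonzero harmonic $(0,1)$-forms, every holomorphic vector field on $\P^r$ is of the form $\nabla^{1,0}f$ for some holomorphy potential $f$, and the space of such vector fields is exactly $\mf{sl}(r+1,\bb{C})$ acting in the standard way. Quotienting by constants on the potential side gives
\[
\dim_{\bb{C}} C^\infty_W(\P(E_b)) = \dim_{\bb{C}} \mf{sl}(r+1,\bb{C}) = (r+1)^2 - 1 = \dim_{\bb{C}} \End_0(E_b).
\]
Moreover, the assignment $\sigma\mapsto h_\sigma$ is (up to the usual sign) the standard $SU(r+1)$ moment map for Fubini--Study, so $h_\sigma$ does lie in $C^\infty_W(\P(E_b))$, and the map $m_b$ takes values in the claimed target. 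Injectivity together with equality of finite dimensions then forces surjectivity. Combining with the Lie-algebra property already observed in the excerpt gives the desired isomorphism of Lie algebras.

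The only mildly subtle point is the identification of the image with $C^\infty_W$ (as opposed to an \emph{a priori} larger space of functions); all other ingredients are an elementary polarisation argument and a standard dimension count for holomorphic vector fields on projective space. For this reason I expect no real obstacle: everything reduces to the standard moment map description of Fubini--Study on $\P^r$ and the transfer of this description across the unitary trivialisation provided by $h$.
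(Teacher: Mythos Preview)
Your proposal is correct and follows the same line as the paper: the paper's proof is entirely contained in the discussion preceding the lemma, which establishes that $m$ lands in $C^\infty_W$ and respects the Lie bracket by appealing to the standard $U(r+1)$ moment map on Fubini--Study $\P^r$, leaving bijectivity implicit in that appeal. You make this bijectivity explicit via a fibrewise polarisation argument for injectivity and a dimension count against $\mathfrak{sl}(r+1,\C)$ for surjectivity, which is a welcome elaboration but not a different route.
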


\subsubsection{The Lie algebra $\mf{k}$}
\label{sec:mfk}
Since $\mathfrak{k} \subset \Gamma(\End E),$ we can thus view $\mathfrak{k}$ as a subset of $C^{\infty}_W \subset C^{\infty}(M)$. We end the section by introducing some convenient notations for various incarnations of $\mfk$ in our setup, building on this. 
Since $K$ acts on $\cU, \cV$ and $M$, elements of $\mfk$ induce vector fields on each of these manifolds. For $\sigma \in \mfk$, let
\begin{enumerate}
\item $\nu_{\sigma} \in \Gamma(T\cU)$ denote the corresponding vector field on $\cU$:
\begin{equation}\label{eq:infinitesimalvectorfield}
\nu_\sigma(x) = \left.\frac{\dd}{\dd t}\right\vert_{t=0} \exp(-i\sigma)\cdot x;
\end{equation}
\item $\eta_{\sigma} \in \Gamma(T\cV)$ denote the corresponding vector field on $\cV$;
\item $\xi_{\sigma} \in \Gamma(TM)$ denote the corresponding vector field on $M$.
\end{enumerate}
Since $\cU=\cV \times M$ as a smooth manifold, we can pull back vector fields on $\cV$ and $M$ to $\cU$. Moreover, as the $K$-action is a product action, we then get that $\nu_{\sigma} = \pr_1^* \eta_{\sigma} + \pr_2^* \xi_{\sigma}$, and this is the horizontal-vertical decomposition of $\nu_{\sigma}$ with respect to the fibration structure $\cU \to \cV$.

We consider the Lie algebra $\mfk$ of $K$ as a subspace of $\Gamma(\End E)$. Let 
$$
\overline \mfk^M = m (\mfk) = \{ \widetilde{h}_{\sigma} : \sigma \in \mfk \},
$$
where $\widetilde{h}_{\sigma} = m (\sigma) \in C^{\infty}(M)$. Then $\widetilde{h}_{\sigma}$ is a Hamiltonian for $\xi_{\sigma}$ on $M$ with respect to the fibrewise Fubini-Study metric induced by a Hermitian metric on $E$. We further let 
$$
h_{\sigma} = \pr_2^* (\widetilde{h}_{\sigma}) \in C^{\infty}(\cU).
$$
Then we have already shown that $h_{\sigma}$ is a Hamiltonian for $\nu_{\sigma}$ with respect to the relatively K\"ahler metrics $\omega_k$ on $\cU$ for any $k$.
\begin{lemma}
\label{lem:changeinholpot}
Let $\sigma \in \mfk$ and let $h_{\sigma}$ be the corresponding Hamiltonian with respect to $\omega$ on $\cU$. Then a Hamiltonian $h_{\sigma, \phi}$ for $\sigma$ with respect to $\omega_k + \ddb \phi$ is given by
$$
h_{\sigma}^\phi = h_{\sigma} +  \nu_{\sigma} (\phi),
$$
where $\nu_{\sigma}$ is the vector field on $\cU$ induced by $\sigma$.
\end{lemma}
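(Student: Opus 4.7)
The proof is a direct computation that exploits the fact that $\nu_{\sigma}$ is a real holomorphic vector field on $\cU$. Indeed, $\nu_{\sigma}$ arises as the infinitesimal generator of a one-parameter subgroup of $G = K^{\mathbb{C}}$ acting holomorphically on $\cU$ through the universal family structure, so $\mathcal{L}_{\nu_{\sigma}} J = 0$ and consequently $\mathcal{L}_{\nu_{\sigma}}$ commutes with $\partial$, $\bar{\partial}$, and hence with $i\partial\bar\partial$.

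Subtracting the assumed Hamiltonian identity $\iota_{\nu_{\sigma}} \omega_k = -dh_{\sigma}$ from the target equation $\iota_{\nu_{\sigma}} (\omega_k + \ddb\phi) = -d(h_{\sigma}^{\phi})$ reduces the lemma to establishing the pointwise identity
\[
\iota_{\nu_{\sigma}} (\ddb\phi) = -d(\nu_{\sigma}(\phi)).
\]
To prove this, the plan is to decompose $\nu_{\sigma} = \nu_{\sigma}^{1,0} + \nu_{\sigma}^{0,1}$ into its type components. Since $\nu_{\sigma}^{1,0}$ is a holomorphic $(1,0)$-vector field, a direct computation in local holomorphic coordinates yields $\iota_{\nu_{\sigma}^{1,0}}(i\partial\bar\partial\phi) = i\bar\partial(\nu_{\sigma}^{1,0}\phi)$; the conjugate identity $\iota_{\nu_{\sigma}^{0,1}}(i\partial\bar\partial\phi) = -i\partial(\nu_{\sigma}^{0,1}\phi)$ follows analogously. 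As an independent check, one can apply Cartan's magic formula to the closed form $\ddb\phi$ to obtain $d\iota_{\nu_{\sigma}}(\ddb\phi) = \mathcal{L}_{\nu_{\sigma}}(\ddb\phi) = \ddb(\nu_{\sigma}\phi)$, confirming consistency with the sought identity up to $d$.

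The key subtlety, which is where I expect the main obstacle to lie, is the interpretation of the Hamiltonian equation on the relatively K\"ahler manifold $\cU$: since $\omega_k$ on $\cU$ is only relatively K\"ahler rather than K\"ahler on the total space, the equation $\iota_{\nu_{\sigma}} \omega_k = -dh_{\sigma}$ must be read in this relative sense. The resolution is to work fibrewise along $\pi_{\cU} : \cU \to \cV$: on each fibre $\cU_v$, the form $\omega_k|_{\cU_v}$ is a genuine K\"ahler metric for $(\cU_v, J_v)$, and the vertical part of $\nu_{\sigma}|_{\cU_v}$ is the Killing field $\xi_{\sigma}$ with Hamiltonian $\widetilde{h}_{\sigma} = h_{\sigma}|_{\cU_v}$ coming from Lemma~\ref{Lemma:CinftyE_correspondence}. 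The fibrewise version of the identity follows from the local computation above, and it glues to a global identity on $\cU$ thanks to the horizontal--vertical decomposition $\nu_{\sigma} = \pr_1^* \eta_{\sigma} + \pr_2^* \xi_{\sigma}$ together with the $K$-equivariance of the universal family.
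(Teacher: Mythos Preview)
Your proposal is correct and follows essentially the same approach as the paper: both reduce to the classical change-of-holomorphy-potential formula (the paper cites \cite[Lemma 4.10]{szekelyhidi14book}) using that $\nu_{\sigma}$ is holomorphic, and both handle the relatively K\"ahler issue by noting that the identity holds fibrewise over $\cV$. The paper's proof is a two-line appeal to this classical fact, while you spell out the local type-decomposition computation and the horizontal--vertical splitting more explicitly, but the substance is the same.
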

\begin{proof}
Fibrewise, this is the classical formula for the change in holomorphy potential, see e.g. \cite[Lemma 4.10]{szekelyhidi14book}. Since $\nu_{\sigma}$ is a vertical holomorphic vector field, this gives the global formula in our context too, even though $\omega_k$ and $\omega_k + \ddb \phi$ are just relatively K\"ahler.
\end{proof}

\begin{remark}
For a section $\sigma \in \mf{k}$, the Hamiltonian $h_\sigma$ is a global holomorphy potential on $\bb{P}(\m{E}_0) \to B$, i.e.\ with respect to the complex structure $J_0$. As remarked above,  $h_\sigma$ is also a fibrewise holomorphy potential for the complex structure $J_v$ for every $v$, but may not be globally holomorphic. For example, for $\bb{P}(\cE)$, none of the non-trivial vector fields obtained in this way are globally holomorphic, since $\cE$ is simple.
\end{remark}

We let
$$
\overline \mfk = \{ h_{\sigma} : \sigma \in \mfk \}
$$
and for $\phi \in C^{\infty}(\cU)$, we let 
$$
\overline \mfk_{\phi} = \{ h_{\sigma} + \nu_{\sigma} (\phi) : \sigma \in \mfk \},
$$
be the Lie algebra of holomorphy potentials on $\cU$ of $\nu_{\sigma}$ with respect to $\omega_k + \ddb \phi$. Similarly, if $\psi \in C^{\infty}(M)$, we let 
$$
\overline \mfk^M_{\psi} = \{ \widetilde{h}_{\sigma} + \xi_{\sigma} (\psi) : \sigma \in \mfk \},
$$
which are Hamiltonians on $M$ of $\xi_{\sigma}$ with respect to $\omega_k + \ddb \psi$, which moreover are holomorphy potentials with respect to the complex structure on $M$ corresponding to the central fibre of the family $\cV$.

\subsection{Adiabatic slope stability}
\label{sec:adiabaticstability}
Let $\m{E}$ be a vector bundle and $\m{S} \subset \cE$ be a reflexive subsheaf. We can form a family of sheaves
$$
\widetilde{\cE} \to B \times \C
$$ 
such that the fibres $\widetilde{\cE}_t \cong \cE$ for $t \neq 0$ and the central fibre $\widetilde{\cE}_0$ coincides with the graded object $\m{S} \oplus \frac{\cE}{\m{S}}.$ Taking the projectivisation, we therefore get a family
\begin{align*}
(\cX_{\m{S}}, \cH) \to B \times \C
\end{align*}
where $\cH$ is the $\mathcal{O}(1)$-bundle on $\cX_{\m{S}} = \P(\widetilde{\cE})$.
Since $\cX_{\m{S}}$ has a map to $B$, we can pull back $L$ to $\cX_{\m{S}}$, so that $\m{H}+kL$ is a relatively ample line bundle on $\m{X}_{\m{S}}\to \bb{C}$.
By projecting onto $\bb{C}$, we obtain a test configuration
\[
(\cX_{\m{S}}, \cH+kL) \to \C,
\]
where the $\bb{C}^*$-action is the trivial action on $\bb{C}$ pulled-back on $(\cX_{\m{S}}, \cH+kL)$.
This leads us to the following stability notion for a vector bundle, through the Donaldson-Futaki invariant of such test configurations.
\begin{definition}
A vector bundle $\cE$ is said to be \emph{adiabatically slope stable with respect to $L$} if for all reflexive subsheaves $\m{S}$ of $\cE$,
\begin{align*}
\mrm{DF}(\cX_{\m{S}}, \cH + kL) > 0
\end{align*}
for all $k \gg 0$.
\end{definition}
Following \cite[\S5.4.3]{rossthomas2006obstruction}, we can expand the Donaldson-Futaki invariant in powers of $k$ and write
\[
\mrm{DF}(\cX_{\m{S}}, \cH + kL) = W_0(\m{S}) k^n + W_1(\m{S}) k^{n-1} + \Ok{n-2}
\]
So $\m{E}$ is adiabatically slope stable if for all $\m{S}$ there exists an order $i$ such that
\begin{equation}\label{Eq:adiabatic_slope_stability_f_stability}
W_0(\m{S}) = \dots = W_i(\m{S}) = 0 \ \text{and} \ W_{i+1}(\m{S}) >0.
\end{equation}

With this formulation, adiabatic slope stability can be viewed as a vector bundle analogue to Hattori's $\mf{f}$-stability \cite{Hattori_f-stability}.
In fact, a proper holomorphic submersion $(X, H) \to (B,L)$ is $\mf{f}$-stable if the condition \eqref{Eq:adiabatic_slope_stability_f_stability} holds for all non-trivial fibration degenerations.

\begin{remark}\label{Rmk:explicit_testconf}
In the case that $\m{S} \subset \m{E}$ is a vector subbundle the test configuration $\m{X}_{\m{S}}$ can be described explicitly using the language of Kuranishi theory introduced in \S\ref{subsubsec:kuranishi_proj}.
Since this is the case of interest when $\m{E}$ is sufficiently smooth, as we assume in the hypotheses of Theorem \ref{thm:main}, we now describe this test configuration in more details.
We consider the deformation space $\m{V}$ parametrising the deformations of $\m{S} \oplus \frac{\m{E}}{\m{S}}$.
Let $v \in \m{V}$ be the point corresponding to $\delbar_{\m{E}}$ via the map $\widetilde \Phi$ \eqref{eq:Kuranishimap_vb} and let $\sigma \in \mf{k}$ be the element in the Lie algebra such that
\[
\lim_{t \to 0} \exp(-it\eta_\sigma)\cdot v =0,
\]
where $\eta_{\sigma}$ is the induced vector field on $\m{V}$.

Composing the Kuranishi map $\Phi$ \eqref{eq:Kuranishimap_proj} together with such a one-parameter subgroup, we obtain an $S^1$-equivariant map
\[
F : \Delta \to \scr{J}.
\]
We define the total space $\m{X}_{\eta_\sigma}$ of the test configuration to be $\bb{P}(\m{E}_v) \times \Delta$ as a smooth manifold, where the complex structure on each fibre over $t \in \Delta$ is given by $F(t)$ and it is integrable because $F$ is holomorphic. The $S^1$-action on $\m{X}_{\eta_\sigma}$ is given by
\begin{equation}\label{Eq:action_induced_testconfig}
s \cdot (x, t) = (\exp(-is \eta_\sigma)\cdot x, s t).
\end{equation}

In terms of $\db$-operators, the operator $\db_{\m{E}}$ on $\m{E}$ corresponding to the point $v$ differs from the operator $\db_0$ on $\m{S} \oplus \m{E}/\m{S}$ by an element $\gamma \in \Omega^{0,1}((E/S)^* \otimes S)$ and the family of $\db$-operators giving the complex structure on the fibre over $t \in \C$ is given by
$$
\db_t = \db_0 + t \gamma.
$$
The action $\ast$ of $S^1$ is then 
\begin{align*}
e^{i\theta} \ast \db_t =& \db_{e^{i \theta}t} \\
=& e^{-i \theta} \cdot \db_t \\
=& \exp(i \theta \Id_S) \circ \db_t \circ \exp(-i \theta \Id_S)
\end{align*}
in terms of the conjugation action. In particular, the vector field corresponding to the push-forward of the angular vector field $\partial_{\theta}$ on the total space of the test configuration is then the one induced by $-i \Id_S$.

The line bundle on $\m{X}_{\eta_\sigma}$ is given by the pull-back under the projection $\bb{P}(\m{E}_v) \times \Delta \to \bb{P}(\m{E}_v)$ of the line bundle $\m{O}_{\bb{P}(\m{E}_v)}(-1)^*+kL$, and the $S^1$-action \eqref{Eq:action_induced_testconfig} lifts to a holomorphic $S^1$-action on this line bundle.
\end{remark}

The test configuration that we obtain in this way has the same Donaldson-Futaki invariant as the deformation to the normal cone of $\bb{P}(\m{E})$ along $\bb{P}(\m{S})$ with polarisation parameter equal to the Seshadri constant of $\bb{P}(\m{S})$, as explained in \cite[Remark 5.14]{rossthomas2006obstruction}.
In fact, given a reflexive subsheaf $\m{S} \subset \m{E}$, the deformation to the normal cone is a test configuration defined as the blow-up along $\bb{P}(\m{S}) \times \{0\}$ of $\bb{P}(\m{E}) \times \bb{P}^1$, and the central fibre is given by $\mrm{Bl}_{\bb{P}(\m{S})}\bb{P}(\m{E})$ glued to the exceptional divisor of $\mrm{Bl}_{\bb{P}(\m{S}) \times \{0\}} \bb{P}(\m{E}) \times \bb{P}^1$ along the exceptional divisor of $\mrm{Bl}_{\bb{P}(\m{S})}\bb{P}(\m{E})$.
The central fibre of our test configuration can be obtained from the deformation to the normal cone after the contraction of the component $\mrm{Bl}_{\bb{P}(\m{S})}\bb{P}(\m{E})$ in the central fibre.

The Donaldson-Futaki invariant of this test configuration can then be related to the slope through the following.
\begin{proposition}[{\cite[\S5.4.3]{rossthomas2006obstruction}}] 
\label{prop:RossThomas}
The Donaldson-Futaki invariant admits an expansion
\begin{equation}\label{eq:DFexpansion_RossThomas}
\mrm{DF}(\cX_{\m{S}}, \cH + kL) = C k^{n-1} (\mu_L(\cE)-\mu_L(\m{S})) + O(k^{n-2}).
\end{equation}
\end{proposition}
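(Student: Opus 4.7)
The plan is to compute $\mrm{DF}(\cX_{\m{S}},\cH+kL)$ by an asymptotic Riemann--Roch computation on the central fibre of the test configuration and then extract the top powers of the adiabatic parameter $k$. By Remark \ref{Rmk:explicit_testconf}, the central fibre is the projective bundle $\cX_0 = \mathbb{P}(\m{S}\oplus\m{E}/\m{S})$ over $B$, and the $\mathbb{C}^*$-action is generated by $\mathrm{Id}_{\m{S}}\oplus 0$, lifted fibrewise to $\cH$. Via the projection $\cX_0 \to B$ and the push-forward formula $\pi_*\mathcal{O}(m) = \mrm{Sym}^m(\m{S}^*\oplus(\m{E}/\m{S})^*)$, the $(a,b)$-summand carries $\mathbb{C}^*$-weight $a$, so for $m\gg 0$ Serre vanishing yields
\begin{align*}
\dim H^0(\cX_0,m(\cH+kL)) &= \sum_{a+b=m}\chi\bigl(B,\mrm{Sym}^a\m{S}^*\otimes\mrm{Sym}^b(\m{E}/\m{S})^*\otimes L^{mk}\bigr),\\
w_m &= \sum_{a+b=m} a\cdot\chi\bigl(B,\mrm{Sym}^a\m{S}^*\otimes\mrm{Sym}^b(\m{E}/\m{S})^*\otimes L^{mk}\bigr).
\end{align*}

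Hirzebruch--Riemann--Roch then rewrites each Euler characteristic as
$$\int_B \mrm{ch}(\mrm{Sym}^a\m{S}^*)\,\mrm{ch}(\mrm{Sym}^b(\m{E}/\m{S})^*)\,e^{mk\,c_1(L)}\,\mrm{td}(B),$$
which is polynomial in $a$, $b$ and $mk$. Summing over $a+b=m$ via Vandermonde-type binomial identities produces polynomials in $m$ whose coefficients are themselves polynomial in $k$. Extracting the top two coefficients $a_0(k),a_1(k)$ of the dimension expansion and $b_0(k),b_1(k)$ of the weight expansion, the Donaldson--Futaki invariant $\mrm{DF}(\cX_{\m{S}},\cH+kL) = (a_1b_0-a_0b_1)/a_0^2$ is then obtained explicitly as a polynomial in $k$.

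The final step is to identify the top two powers of $k$. Only top-degree terms in $c_1(L)$ from the HRR integrand contribute to the leading $k$-asymptotics. A direct bookkeeping shows that the $k^n$-contribution to the numerator $a_1b_0-a_0b_1$ cancels (consistent with the fact that the $\mathbb{C}^*$-action is trivial on $L$), while the subleading $k^{n-1}$-contribution picks up exactly one factor of $c_1(\m{S})$ or $c_1(\m{E}/\m{S})$ wedged with $c_1(L)^{n-1}$. Using the additivity $c_1(\m{E}) = c_1(\m{S}) + c_1(\m{E}/\m{S})$ and collecting, these terms combine into a positive constant multiple of $\rk\m{S}\cdot c_1(\m{E})\cdot L^{n-1} - \rk\m{E}\cdot c_1(\m{S})\cdot L^{n-1} = \rk\m{S}\cdot\rk\m{E}\cdot(\mu_L(\m{E})-\mu_L(\m{S}))$, as claimed.

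The main obstacle is the binomial bookkeeping for the Sym-power sums while simultaneously tracking the two expansion parameters $m$ and $k$, and in particular verifying the cancellation of the $k^n$-term that forces the leading behaviour to be at order $k^{n-1}$. A subsidiary issue is that $\m{S}$ is only reflexive; since $B$ is smooth, $\m{S}$ is locally free off a codimension-$\ge 3$ locus, so $\mu_L(\m{S})=c_1(\m{S})\cdot L^{n-1}/\rk\m{S}$ is well-defined via the double dual or by restriction to the smooth locus, and the singular set does not affect the top-intersection numbers appearing in the computation.
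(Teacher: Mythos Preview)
The paper does not give its own proof of this proposition; it is cited directly from Ross--Thomas \cite[\S5.4.3]{rossthomas2006obstruction}. Your outline is precisely their strategy: compute the Hilbert and weight polynomials of the central fibre via the push-forward $\pi_*\mathcal{O}(m)=\mrm{Sym}^m(\m{S}^*\oplus(\m{E}/\m{S})^*)$, apply Hirzebruch--Riemann--Roch, and extract the leading $k$-asymptotics. So as a reproduction of the cited argument your sketch is on target, though it remains a sketch---you correctly identify the combinatorial summation as the crux and do not carry it out.

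It is worth noting that later in the paper (Section~\ref{sec:example}) a genuinely different route is taken: rather than Riemann--Roch on $H^0$, the paper uses Legendre's localisation formula (Proposition~\ref{thm:FutLoc}) for the classical Futaki invariant on the smooth central fibre $\bb{P}(\m{S}\oplus\m{Q})$, reducing everything to equivariant intersection numbers on the fixed loci $\bb{P}(\m{S})$ and $\bb{P}(\m{Q})$. In the explicit case $n=2$, $r=2$ worked out in \S\ref{sec:concreteeg}, this recovers $a_0=0$ and $a_1\propto \mu_L(\m{S})-\mu_L(\m{E})$, confirming the proposition. The localisation approach bypasses the symmetric-power combinatorics entirely, at the cost of requiring a smooth central fibre (hence $\m{S}$ a subbundle), whereas the Riemann--Roch computation you outline works in the sheaf setting that Ross--Thomas need. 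One small caution on your last paragraph: your appeal to Remark~\ref{Rmk:explicit_testconf} already presupposes $\m{S}$ locally free, so for a genuinely reflexive $\m{S}$ one should phrase the central fibre as $\Proj\,\mrm{Sym}(\m{S}\oplus\m{E}/\m{S})^*$ and run Riemann--Roch sheaf-theoretically rather than invoking that remark.
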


The relationship between slope stability and adiabatic slope stability is similar to the relationship between slope stability and Gieseker stability.
\begin{lemma}
Slope stability implies adiabatic slope stability, and adiabatic slope semistability implies slope semistability.
\end{lemma}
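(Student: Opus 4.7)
The plan is to apply Proposition \ref{prop:RossThomas} directly; the entire statement will reduce to reading off the sign of the leading coefficient of the Donaldson--Futaki invariant as a polynomial in $k$.

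First I would recall the expansion
\[
\mrm{DF}(\cX_{\m{S}}, \cH + kL) = C k^{n-1}\bigl(\mu_L(\cE)-\mu_L(\m{S})\bigr) + O(k^{n-2}),
\]
where $C$ is a positive constant (this is the positivity contained in the Ross--Thomas expansion, coming from a volume-type quantity). Comparing with the expansion $\mrm{DF} = W_0(\m{S})k^n + W_1(\m{S})k^{n-1}+O(k^{n-2})$ used to define adiabatic slope stability, we see at once that $W_0(\m{S}) = 0$ for every reflexive subsheaf $\m{S}$, and that $W_1(\m{S})$ has the same sign as $\mu_L(\cE)-\mu_L(\m{S})$.

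For the first implication, suppose $\cE$ is slope stable, so that $\mu_L(\m{S}) < \mu_L(\cE)$ for every proper reflexive subsheaf $\m{S}\subset\cE$. Then $W_1(\m{S}) > 0$, which verifies the adiabatic stability condition \eqref{Eq:adiabatic_slope_stability_f_stability} with $i=0$, and hence yields $\mrm{DF}(\cX_{\m{S}}, \cH + kL)>0$ for all $k \gg 0$. Thus $\cE$ is adiabatically slope stable.

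For the second implication, I argue by contrapositive. Assume that $\cE$ is not slope semistable, so there exists a proper reflexive subsheaf $\m{S}\subset\cE$ with $\mu_L(\m{S}) > \mu_L(\cE)$. Then $W_1(\m{S}) = C(\mu_L(\cE)-\mu_L(\m{S}))<0$, so $\mrm{DF}(\cX_{\m{S}}, \cH + kL) < 0$ for all sufficiently large $k$, contradicting adiabatic slope semistability. The only point that requires some care, and which I view as the mildest obstacle, is the positivity of the constant $C$: this is not explicitly stated in Proposition \ref{prop:RossThomas}, but follows from its derivation in \cite{rossthomas2006obstruction} as a positive intersection number on the base.
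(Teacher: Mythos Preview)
Your proof is correct and follows essentially the same approach as the paper: both implications are read off directly from the Ross--Thomas expansion of Proposition \ref{prop:RossThomas}. The paper's proof is a single sentence invoking that expansion, whereas you spell out the argument in detail and make explicit the (correct) observation that the constant $C$ is positive, which the paper leaves implicit.
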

\begin{proof}
Both statements follow from the Ross--Thomas expansion \eqref{eq:DFexpansion_RossThomas} of the Donaldson-Futaki invariant.
\end{proof}

\begin{remark}
In this work, we restrict to only consider subbundles of $\cE$. This leads to an \emph{a-priori} weaker notion of adiabatic slope stability with respect to subbundles, where we only require $\mrm{DF}(\cX_{\m{S}}, \cH + kL) > 0$ when $\m{S}$ is a subbundle of $\cE$. Under our sufficiently smooth hypothesis on $\cE$, this will turn out to be equivalent to adiabatic slope stability, see Corollary \ref{cor:adiabaticsubbundles}. In general, when $\cE$ is not sufficiently smooth, we do not expect this equivalence.
\end{remark}

\section{The behaviour of the linearised operator under extensions}
\label{sec:linear}

In this section, we discuss the linear theory of the problem. As our construction relies on the implicit function theorem, we need to understand the mapping properties of the linearisation of the scalar curvature operator. Moreover, when constructing approximate solutions on $\P(\cE)$ it is convenient to make changes to the $\dbar$-operator on the vector bundle. This uses the linearisation of the Hermite-Einstein operator, i.e.\ the contraction of the curvature, which is given by the Laplacian on endomorphisms of $\cE$. We therefore also describe the mapping properties of this operator in this section.

Let $\| \cdot \|_{L^2(g)}$ denote the $L^2$-norm with respect to a Riemannian metric $g$. We use that $\cE$ admits a filtration 
$$
0=\m{S}_0 \subset \m{S}_1 \subset \m{S}_2 \subset \ldots \subset \m{S}_d = \m{E}
$$
such that $\cE_0 = \oplus_{i=1}^d \m{Q}_i$ is a direct sum of \emph{simple} bundles $\m{Q}_i = \frac{\m{S}_i}{\m{S}_{i-1}}$. We also assume that $\mrm{Aut}(B,L)$ is discrete. In this section, we do not use the existence of any special metrics on either $\cE_0$ or $B$.

\subsection{The linearisation of the scalar curvature operator}
Let $L^2_q(g_{k,v})$ denote the Sobolev space of functions on $M$ whose weak derivatives up to order $q$ are in $L^2$, with respect to $g_{k,v}$. The main result of this section is a uniform bound for a right inverse to the Lichnerowicz operator. 
\begin{proposition}
\label{prop:lichnerowicz}
There exists $C, k_0, \delta > 0$ such that for all $v$ such that $|v| \leq \delta$ and $k\geq k_0$ the operator 
$$L_{k,v} : L^2_{q+4}(g_{k,v}) \times \mfk \to L^2_q(g_{k,v})$$ 
given by 
$$
(\phi, \sigma) \mapsto \mathcal{D}^*_{g_{k,v}}\mathcal{D}_{g_{k,v}}(\phi ) - f_{\sigma} - \nu_{\sigma} (\phi)
$$
is surjective and has a right inverse $Q_{k,v}$ satisfying 
$$
\| Q_{k,v} \| \leq C k^3.
$$
\end{proposition}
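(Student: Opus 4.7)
The plan is to establish the proposition in two main steps: first analyse the operator at the central point $v=0$ (where the complex structure of the Kuranishi family is that of the polystable bundle $\cE_0$), and then promote the result to nearby $v$ by perturbation. The key input from the central fibre is that $\ker L_{k,0}$ and its cokernel can be computed using the automorphism group of $\cE_0$.

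First I would work at $v = 0$. Since $\mrm{Aut}(B,L)$ is discrete and $\cE_0 = \bigoplus_i \m{Q}_i$ is slope polystable, the real holomorphy potentials on $(\P(\cE_0), \omega_k)$ are exactly the Hamiltonians in $\overline{\mfk}$, so $\ker \mathcal{D}^*_{g_{k,0}} \mathcal{D}_{g_{k,0}} = \overline{\mfk}$ for every $k$ large. By self-adjointness of the Lichnerowicz operator, its image is $\overline{\mfk}^{\perp}$ in $L^2(g_{k,0})$. The augmentation term in $L_{k,v}$ is then used to cover the kernel: since $\sigma \mapsto -f_\sigma$ is an isomorphism of $\mfk$ onto $\overline{\mfk}$ via the map $m$ of Lemma \ref{Lemma:CinftyE_correspondence}, $L_{k,0}$ is surjective with kernel only in the $\phi$-direction (parametrised by $\overline{\mfk}$), and setting $\phi \perp_{L^2} \overline{\mfk}$ uniquely determines a right inverse $Q_{k,0}$.

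The next step, which is the quantitative heart of the proof, is to bound $\|Q_{k,0}\|$ by $Ck^3$ using adiabatic analysis. I would use the decomposition $C^{\infty}(M) = C^{\infty}(B) \oplus C^{\infty}_W \oplus C^{\infty}_R$ from \S \ref{sec:projvb}. On $C^{\infty}_R$, the vertical Lichnerowicz operator $\m{D}_V^*\m{D}_V$ is uniformly invertible and dominates the leading order of $\m{D}^*_{g_{k,0}}\m{D}_{g_{k,0}}$, giving an $O(1)$ contribution to the inverse. On $C^{\infty}_W$, the vertical part vanishes on fibrewise holomorphy potentials, and the leading order of $\m{D}^*_{g_{k,0}}\m{D}_{g_{k,0}}$ comes from horizontal derivatives applied to a section of the endomorphism bundle of $\cE_0$ (via the identification $m$), which by Lemma \ref{Lemma:linearisationHE} relates to the Laplacian on $\End \cE_0$; this Laplacian is invertible on $C^{\infty}_W / \overline{\mfk}$, and accounting for the $k^{-1}$ scaling of the horizontal Laplacian applied twice, together with the volume factor, yields the $k^3$ growth. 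On $C^{\infty}(B)$ one obtains an analogous expansion coming from the Lichnerowicz operator of $\omega_B$, which is invertible modulo constants as $\mrm{Aut}(B,L)$ is discrete, again with a bound of the same order. After carefully handling the off-diagonal couplings between the three summands (which are lower order in $k$), one assembles the global right inverse with the claimed bound.

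Finally, for $v \neq 0$, I would write $L_{k,v} = L_{k,0} + R_{k,v}$ where $R_{k,v}$ collects the $v$-dependent corrections to the metric $g_{k,v}$, to the complex structure $J_v$ through the Kuranishi map \eqref{eq:Kuranishimap_vb}, and to the vector field $\nu_{\sigma}$ via \eqref{eq:infinitesimalvectorfield}. These are smooth in $v$ and vanish at $v = 0$, so in operator norm $\|R_{k,v}\| \leq C(k) |v|$ for an explicit polynomial $C(k)$. A Neumann series argument invokes the bound on $Q_{k,0}$ to produce a right inverse $Q_{k,v}$ with the same asymptotic bound $Ck^3$, provided $|v|$ is sufficiently small, uniformly in $k$; this uniformity is possible because the adiabatic metric causes the $v$-dependent perturbation terms to carry compensating negative powers of $k$.

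The main obstacle will be the quantitative estimate in the second step: tracking the precise $k$-asymptotics of $\m{D}^*_{g_{k,0}}\m{D}_{g_{k,0}}$ on each block of the splitting, verifying that the off-diagonal couplings do not spoil the block-triangular structure at the required order, and matching the scaling with the $k$-dependent Sobolev norms $L^2_q(g_{k,v})$. This is technically demanding but follows the blueprint of the adiabatic linear theory developed in \cite{fine04, dervansektnan21a, Ortu_OSCdeformations}, the novelty here being the systematic use of the $\mfk$-augmentation to absorb the asymptotic kernel $\overline{\mfk}$ without inverting the Lichnerowicz operator on it.
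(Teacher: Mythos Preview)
Your overall framework (identify the kernel at $v=0$, obtain a $k^3$ bound on the right inverse there, then perturb in $v$) is natural, but it diverges from the paper's proof and contains a genuine gap in Step~3.

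\textbf{The paper's route.} The paper does \emph{not} first solve at $v=0$ and then perturb. Instead it follows Fine's strategy: establish a Poincar\'e inequality $\|\mathcal{D}_{k,v}\phi\|^2 \geq Ck^{-3}\|\phi\|^2$ on $\bar\mfk^{\perp}$, directly for all small $v$. The key is to prove the eigenvalue estimate for $\bar\partial_v$ on vector fields (Lemma~\ref{lem:dbarestimate}) at the level of the $k$-independent product metric $h_1$, where continuity of the first eigenvalue of $\Delta^{h_1}_v$ in $v$ gives a $v$-uniform bound; one then scales from $h_1$ to $h_k$ using Lemma~\ref{lem:scalingcomparison} and converts to $g_{k,v}$ via uniform equivalence (Lemma~\ref{lem:uniformmetrics}). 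The $v$-uniformity is obtained \emph{before} any $k$-dependence enters, so no coupling between $k$ and $\delta$ arises.

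\textbf{The gap in your Step~3.} Your Neumann series argument needs $\|Q_{k,0}\|\cdot\|R_{k,v}\|<1$. You have $\|Q_{k,0}\|\leq Ck^3$, so you need $\|R_{k,v}\|\leq c k^{-3}$ for small $|v|$. But the perturbation $J_v-J_0$ comes from changing the $\bar\partial$-operator on the bundle and is purely \emph{vertical}; the resulting change in the Lichnerowicz operator is a fourth-order operator with vertical principal part and coefficients of size $O(|v|)$. In the $g_{k,v}$-Sobolev norms (where vertical derivatives carry weight $O(1)$ and only horizontal derivatives carry powers of $k$), this perturbation has operator norm of order $|v|$ with no compensating negative power of $k$. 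Your assertion that ``the adiabatic metric causes the $v$-dependent perturbation terms to carry compensating negative powers of $k$'' is therefore unjustified, and in fact the argument forces $|v|\lesssim k^{-3}$, which is not the uniform $\delta$ claimed in the proposition. The paper's approach was designed precisely to avoid this: by passing to $h_1$, the $v$-perturbation is absorbed where the relevant inverse is $O(1)$, and the $k^3$ only appears afterwards via scaling.

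\textbf{Secondary remark on Step~2.} The block-decomposition strategy you describe (inverting separately on $C^\infty_R$, $C^\infty_W/\bar\mfk$, $C^\infty(B)$) is a legitimate alternative to the Poincar\'e inequality, in the spirit of \cite{dervansektnan21a,Ortu_OSCdeformations}, but the accounting you give for the $C^\infty_W$ block is too vague to be convincing; in particular the assertion that the horizontal Laplacian ``applied twice'' together with a volume factor gives exactly $k^3$ would need a careful expansion of $\mathcal{D}^*_{g_{k,0}}\mathcal{D}_{g_{k,0}}$ on $C^\infty_W$. Even if this were made precise, it would not repair Step~3.
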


We follow the strategy of \cite[Theorem 6.1]{fine04}. The primary goal is to give a Poincar\'e inequality for the map $\phi \mapsto \bar \partial_v (\nabla_{g_{k,v}} \phi)$, where $g_{k,v}$ is the Riemannian metric induced by $\omega_k$ and $J_v$. This gives a lower bound for the first non-zero eigenvalue of the (negative of) the Lichnerowicz operator, which in turn gives the required bound for $Q_{k,v}$.

To prove this Poincar\'e inequality, it is convenient to work with a product metric. The form $\omega$ defines a splitting $TX = TB \oplus V$. Using this splitting, we have a product metric $h_k = (k \omega_B) \oplus (\omega\big|_{V})$. As in Fine's work, this metric is uniformly equivalent to $g_{k,v}$, and we can make this independent of $v$ in any sufficiently small ball about $0$.
\begin{lemma}[{\cite[Lemma 6.2]{fine04}}]
\label{lem:uniformmetrics}
For any tensor bundle on $X$, there exists a $\delta> 0$ such that $g_{k,v}$ and $h_k$ are uniformly equivalent, uniformly for all $k$ sufficiently large and all $v$ such that $|v| < \delta$.
\end{lemma}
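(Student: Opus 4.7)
The plan is to reduce the claim to the case of the tangent bundle $TX$ itself, since uniform equivalence of metrics on $TX$ implies uniform equivalence on any fixed tensor bundle by taking tensor products, duals, and contractions (the constants will depend on the rank of the bundle but not on $k$ or $v$). I then analyse $g_{k,v}$ on $T_xX$ in two stages: first at $v=0$, then treating the perturbation in $v$.

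At $v=0$, the $\omega$-orthogonal splitting $TX = TB \oplus V$ used to define $h_k$ is $J_0$-invariant, because $\pi$ is holomorphic for $J_0$ (so $J_0$ preserves $V = \ker d\pi$) and $\omega$ is relatively K\"ahler for $J_0$ (so $J_0$ preserves the $\omega$-orthogonal complement of $V$). Consequently $g_{k,0}$ is block diagonal in this splitting: its vertical block equals $\omega|_V(\cdot, J_0\cdot)$, matching $h_k|_V$ exactly, and its horizontal block equals $\omega|_{TB}(\cdot, J_0\cdot) + k g_B$. The second summand is $h_k|_{TB}$ and the first is uniformly bounded on the compact manifold $X$, so for $k$ sufficiently large the horizontal blocks are uniformly equivalent, and thus $g_{k,0}$ and $h_k$ are uniformly equivalent.

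For the $v$-dependence I would use that the projection $\pi$ stays holomorphic for every $J_v$, so $J_v$ preserves $V$ and $d\pi \circ J_v = J_B \circ d\pi$, where $J_B$ is the complex structure on the base. Decomposing a tangent vector as $X = X^h + X^v$ in the fixed splitting, and using that $\pi^*\omega_B$ only pairs horizontal vectors, this identity forces the $k \pi^*\omega_B$-contribution to $g_{k,v}(X,X) = \omega_k(X, J_v X)$ to reduce to $k g_B(d\pi X^h, d\pi X^h)$, independently of $v$. The remaining contribution $\omega(X, J_v X)$ differs from $\omega(X, J_0 X)$ by an error that vanishes at $v=0$ and is $O(|v|)$ in $C^0$-norm by smoothness of the Kuranishi map \eqref{eq:Kuranishimap_proj}. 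Since this error involves only $\omega$, it is bounded independently of $k$, and is therefore dominated by $h_k$ once $\delta$ is small and $k_0$ is large enough.

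The main obstacle is the $k$-uniformity of the perturbative estimate: an $O(|v|)$ error in the horizontal directions that scaled with $k$ would destroy any hope of a single $\delta$ working uniformly in $k$. The identity $d\pi \circ J_v = J_B \circ d\pi$ is precisely what rules this out, as it confines the $k$-scaled part of $g_{k,v}$ to be $v$-independent. With this observation in hand, a single choice of $\delta$ works for all $k \geq k_0$, and the statement for arbitrary tensor bundles follows from the initial reduction.
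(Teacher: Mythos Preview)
Your proof is correct and follows essentially the same approach as the paper, which simply cites Fine for the $k$-uniformity and remarks in passing that the $v$-uniformity comes from restricting to a small ball; you have supplied the details the paper omits. The key observation you isolate---that $d\pi \circ J_v = J_B \circ d\pi$ forces the $k\pi^*\omega_B$-contribution to be $v$-independent, so the entire $v$-dependence sits in the $k$-free term $\omega(\cdot,(J_v-J_0)\cdot)$---is exactly what makes a single $\delta$ work uniformly in $k$, and is implicit in the paper's one-line justification.
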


We begin with a Poincar\'e inequality for the exterior derivative on $X$. Note that on functions, the $L^2$-norm only depends on the symplectic form $\omega_k$, not the complex structure $J_v$.
\begin{lemma}[{\cite[Lemma 6.5]{fine04}}]
\label{lem:dPI}
There exists a $C>0$ such that for all $\phi$ of mean value zero with respect to $\omega_k$, 
$$
\| d\phi\|^2_{L^2(g_{k,v})} \geq Ck^{-1} \| \phi \|^2_{L^2(\omega_k)}
$$
for all $k$ sufficiently large.
\end{lemma}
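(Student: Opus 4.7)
The plan is to reduce the estimate to the asymptotically product metric $h_k = k\omega_B \oplus \omega\big|_V$, exploit the fibration structure, and combine Poincar\'e inequalities on the base and on the fibres. By Lemma \ref{lem:uniformmetrics}, $g_{k,v}$ is uniformly equivalent to $h_k$ on the relevant tensor bundles, so up to a uniform constant it suffices to work with $h_k$. For the product metric, horizontal and vertical cotangent norms scale differently: $|\alpha_B|^2_{h_k} = k^{-1}|\alpha_B|^2_{\omega_B}$ while $|\alpha_V|^2_{h_k} = |\alpha_V|^2_{\omega}$. Combined with $\omega_k^{n+r} = \binom{n+r}{n} k^n \omega_B^n \wedge \omega^r + O(k^{n-1})$ for the volume form, the inequality reduces, after dividing by $k^{n-1}$, to
\begin{equation*}
\|d_B\phi\|^2 + k\|d_V\phi\|^2 \geq C\|\phi\|^2,
\end{equation*}
where all $L^2$-norms are now taken with respect to the fixed product measure $\omega_B^n \wedge \omega^r$. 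Since $k \geq 1$, it is enough to prove $\|d_B\phi\|^2 + \|d_V\phi\|^2 \geq C\|\phi\|^2$.

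To establish this, I would decompose $\phi = \bar{\phi} + \phi_0$ where $\bar{\phi}(b) = \mrm{Vol}(X_b)^{-1}\int_{X_b} \phi\,\omega^r$ is the fibrewise average and $\phi_0$ has fibrewise zero mean; these two pieces are $L^2$-orthogonal with respect to the product measure. For $\phi_0$, all fibres are $\CP^r$ with Fubini--Study metrics varying smoothly and boundedly in $b\in B$, so a uniform fibrewise Poincar\'e inequality yields $\|\phi_0\|^2 \leq C\|d_V\phi\|^2$. For $\bar\phi$, write $\bar\phi = \tilde{\bar\phi} + c$ with $c\in\R$ chosen so that $\tilde{\bar\phi}$ integrates to zero against $\omega_B^n$ on $B$. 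The standard Poincar\'e inequality on the compact K\"ahler base then gives $\|\tilde{\bar\phi}\|^2 \leq C\|d\bar\phi\|^2 \leq C\|d_B\phi\|^2$, since $d\bar\phi$ is the fibrewise average of $d_B\phi$.

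The main subtle point is the control of the constant $c$: the hypothesis that $\phi$ has mean zero is with respect to $\omega_k$, not the leading-order product measure. Inserting the expansion of $\omega_k^{n+r}$ into $\int_X \phi\,\omega_k^{n+r} = 0$, the $k^n$-leading term produces a multiple of $c$ while the $O(k^{n-1})$-corrections are controlled by Cauchy--Schwarz, yielding $|c| \leq Ck^{-1}\|\phi\|_{L^2}$, and hence $\|c\|^2_{L^2} \leq Ck^{-2}\|\phi\|^2$. Combining the three estimates gives
\begin{equation*}
\|\phi\|^2 = \|\tilde{\bar\phi}\|^2 + \|c\|^2_{L^2} + \|\phi_0\|^2 \leq C\big(\|d_B\phi\|^2 + \|d_V\phi\|^2\big) + Ck^{-2}\|\phi\|^2,
\end{equation*}
and for $k$ sufficiently large the $k^{-2}\|\phi\|^2$ term can be absorbed into the left-hand side. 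The main obstacle is precisely this bookkeeping of subleading contributions to $\omega_k^{n+r}$; once handled, the argument is a clean combination of the fibrewise and base Poincar\'e inequalities, and it is the relative weighting of horizontal versus vertical directions in $h_k$ that forces the loss of the factor $k^{-1}$ in the final inequality.
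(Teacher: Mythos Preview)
Your proof is correct and follows essentially the same route as the paper's (which defers to Fine \cite{fine04} and to the analogous argument for Lemma \ref{lem:dbarestimate}): pass to the product metric $h_k$ via Lemma \ref{lem:uniformmetrics}, use the scaling behaviour to reduce to a Poincar\'e inequality for the fixed metric $h_1$, and absorb the discrepancy between the $\omega_k$-mean-zero hypothesis and the $h_1$-mean-zero condition through the $O(k^{-1})$ expansion of the volume form. Your explicit fibre/base decomposition is just a hands-on proof of the $h_1$-Poincar\'e inequality rather than an invocation of it (the paper would instead use Lemma \ref{lem:scalingcomparison} together with a global Poincar\'e for $h_1$); one small imprecision---$d\bar\phi$ is not literally the fibrewise average of $d_B\phi$, since the non-vertical components of $\omega^r$ contribute a term involving $d_V\phi$---is harmless, as only the weaker bound $\|d\bar\phi\|^2 \leq C(\|d_B\phi\|^2 + \|d_V\phi\|^2)$ is needed.
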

This follows as in Fine's case as he uses estimates with respect to the $h_k$, which also in our setting is uniformly equivalent to the metrics $g_{k,v}$. The proof is also similar to the proof of Lemma \ref{lem:dbarestimate}, which we give in full below.

The next step is to give a similar bound for $\dbar$ on global sections of $TX$ and we begin with describing this kernel, similarly to \cite[Lemma 6.3]{fine04}. Here we use the hypothesis that the group $\Aut(B,L)$ of automorphisms of $B$ that lift to $L$ is discrete. 
\begin{lemma}
\label{lem:automs}
Suppose $\Aut(B,L)$ is discrete. Then the holomorphy potentials on $X_v$ with respect to $\omega_{k,v}$ are precisely those induced by elements of $H^0(\End \m{E}_v)$ through the map $m$.
\end{lemma}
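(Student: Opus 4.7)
\textbf{Proof plan for Lemma \ref{lem:automs}.}
The first direction is straightforward: for $\sigma\in H^0(\End\cE_v)$, exponentiating $\sigma$ yields a holomorphic one-parameter family of bundle automorphisms of $\cE_v$, inducing a vertical holomorphic vector field on $X_v$ whose Hamiltonian for the fibrewise Fubini--Study form $\omega$ is $h_\sigma=m(\sigma)$ by the construction of $m$. Verticality ensures $\iota_{\xi_\sigma}\pi^*\omega_B=0$, so $h_\sigma$ is also a Hamiltonian for $\omega_{k,v}=\omega+k\omega_B$, giving $m(H^0(\End\cE_v))\subseteq\{\text{holomorphy potentials}\}$.

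Conversely, let $f$ be a holomorphy potential for $\omega_{k,v}$ with Hamiltonian vector field $\xi=\nabla^{1,0}_{\omega_{k,v}}f$. Pairing the equation $\iota_\xi\omega_{k,v}=-\bar\partial f$ with vertical tangent vectors at points of a fibre $X_b\cong\P^r$, and using that $\pi^*\omega_B$ vanishes on verticals, shows that $f|_{X_b}$ obeys the fibrewise Hamiltonian equation for the Fubini--Study metric. Together with the identification \eqref{Eq:CinftyE_correspondence}, this forces $f=h_\sigma+\pi^*\bar f$ with $\sigma\in C^\infty(B,\End_0 E)$ trace-free and $\bar f\in C^\infty(B)$ equal to the fibre average of $f$. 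Reading off the $\bar\partial_v$-closedness of $\sigma$ from the non-vertical component of the Hamiltonian equation then yields $\sigma\in H^0(\End_0\cE_v)$.

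The remaining task is to show $\bar f$ is constant. Wedging the Hamiltonian equation with $\omega^r$ and applying $\pi_*$, using $\bar\partial\omega=0$, the projection formula, and that $\pi_*\omega^r=C_0>0$ is a constant on $B$, yields
\[
\bar\partial(C_0\bar f)+kC_0\,\iota_{\xi_B}\omega_B+\pi_*(\iota_\xi\omega\wedge\omega^r)=0
\]
on $B$, where $\xi_B=d\pi(\xi)\in H^0(TB)$. A degree computation using that $\omega^{r+1}\wedge\pi^*\beta=0$ for test forms $\beta\in\Omega^{n,n-1}(B)$, combined with the graded Leibniz rule for $\iota_{\xi_B}$, identifies $[\pi_*(\iota_\xi\omega\wedge\omega^r)]=(r+1)^{-1}[\iota_{\xi_B}\pi_*\omega^{r+1}]\in H^{0,1}(B)$, a Segre-class expression in $c_1(\cE_v)$ that is in particular independent of $k$. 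Through the Leray isomorphism $H^{0,1}(X_v)\cong H^{0,1}(B)$ available for projective bundles (since $H^i(\P^r,\cO)=0$ for $i>0$), the displayed relation translates into a single cohomological identity whose leading $k$-term, for $k$ sufficiently large, forces $[\iota_{\xi_B}\omega_B]=0$ in $H^{0,1}(B)$; discreteness of $\Aut(B,L)$ then gives $\xi_B=0$, so $\bar f$ is constant on $B$. Writing $\bar f=c$, we conclude $f=h_\sigma+c=m(\sigma-c\,\Id_E)$ with $\sigma-c\,\Id_E\in H^0(\End\cE_v)$, completing the proof. The main obstacle is the cohomological argument in this last step: extracting the lifting condition for $\xi_B$ requires careful bookkeeping through the Leray isomorphism and the $k$-independence of the Segre-type correction term.
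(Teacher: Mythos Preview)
The paper does not actually supply a proof of this lemma; it merely states it and points to \cite[Lemma 6.3]{fine04} in the surrounding text. So there is no detailed argument in the paper to compare against, only the implicit reference to Fine's approach, which is considerably more direct than yours.

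Your overall structure is right: one direction is immediate, and for the converse you correctly split $f=h_\sigma+\pi^*\bar f$ and reduce to showing $\sigma$ is holomorphic and $\bar f$ is constant. The problem is the last step. Your ``leading $k$-term'' argument tacitly assumes that the holomorphic vector field $\xi$, and hence $\xi_B$ and the Segre-type term $\pi_*(\iota_\xi\omega\wedge\omega^r)$, are independent of $k$, so that one may compare the displayed relation for different values of $k$. But you begin by fixing a holomorphy potential $f$ for a \emph{fixed} $\omega_{k,v}$, and everything you write is in principle $k$-dependent through $f$. Without justifying that the same $\xi$ is Hamiltonian for all $\omega_{k,v}$, the extraction of the leading $k$-coefficient is not legitimate, and the cohomological bookkeeping through the Leray isomorphism does not repair this.

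There is a much shorter route that bypasses all of this. On a compact K\"ahler manifold, a holomorphic vector field has a holomorphy potential (for any K\"ahler form) if and only if it has a zero. If $\xi$ has a holomorphy potential for $\omega_{k,v}$ then $\xi$ vanishes at some $p\in X_v$, so $\xi_B=d\pi(\xi)$ vanishes at $\pi(p)$. Hence $\xi_B$ is a holomorphic vector field on $B$ with a zero, therefore has a holomorphy potential for $\omega_B\in c_1(L)$, therefore lies in the Lie algebra of $\Aut(B,L)$, which is zero by hypothesis. Thus $\xi$ is vertical, so $\xi$ comes from some $\sigma\in H^0(\End_0\cE_v)$, and $\iota_\xi\omega_{k,v}=\iota_\xi\omega$ forces $f=h_\sigma+c=m(\sigma-c\,\Id_E)$. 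This is closer in spirit to Fine's argument and avoids your push-forward and Leray manipulations entirely.
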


We can now establish the Poincar\'e estimate for $\db$. Note that this operator and even its kernel depends on $v$. We will therefore work orthogonally to the kernel $\mfh$ at $v=0$. Note that $\mfh$ may include holomorphic vector fields that do not admit holomorphy potentials, but these will not impact the main result of this section.
\begin{lemma}
\label{lem:dbarestimate}
There exists a $C>0$ such that for all $v$ sufficiently close to $0$ and all $k$ sufficiently large, 
$$
\| \dbar_v \nu \|^2_{L^2(g_{k,v})} \geq C k^{-2} \| \nu \|^2_{L^2(g_{k,v})}
$$
for all $\nu$ that are $g_{k,v}$-orthogonal to $\mfh \subset \Gamma(TX)$.
\end{lemma}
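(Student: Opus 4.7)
The plan is to establish the estimate at $v=0$ first and then extend it to nearby $v$ by a perturbation argument, exploiting the discreteness of $\Aut(B,L)$ and the semisimplicity of $\cE_0$. By Lemma \ref{lem:uniformmetrics}, it is enough to prove the inequality with respect to the product metric $h_k = k\omega_B \oplus \omega|_V$ instead of $g_{k,v}$. Since $\bar\partial_v$ depends holomorphically on $v$ with $\bar\partial_v - \bar\partial_0 = O(|v|)$, and since at $v=0$ the kernel $\mfh$ is finite-dimensional, a standard spectral continuity argument (constructing an approximate right inverse of $\bar\partial_0$ on the $h_k$-orthogonal complement of $\mfh$ and perturbing via a Neumann series) reduces the lemma to the case $v=0$ after possibly shrinking $\cV$.

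At $v=0$ I would first identify $\mfh$: since $\Aut(B,L)$ is discrete one has $H^0(B,TB)=0$, so every holomorphic vector field on $X_0 = \P(\cE_0)$ is vertical; by Lemma \ref{lem:automs}, $\mfh$ is then the image under $m$ of $H^0(B,\End \cE_0)$. Schur's lemma applied to the polystable decomposition $\cE_0 = \bigoplus_i Q_i$ into simple bundles of the same slope shows that this space is finite-dimensional (with one summand $\mathbb{C}\cdot \Id_{Q_i}$ per $Q_i$). Using the Chern connection on $\cE_0$, I would split $TX_0 = \pi^*TB \oplus V$ and decompose $\nu = \nu_H + \nu_V$, so that the $h_k$-norms split orthogonally; in this splitting the vertical-to-vertical block of $\bar\partial_0$ is the fibrewise $\bar\partial$, the horizontal-to-horizontal block recovers $\bar\partial_B$ on the corresponding section of $TB$, and the off-diagonal blocks involve the curvature of the connection and are pointwise bounded.

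The desired bound then follows from three uniform lower bounds. First, for $\nu_V$ orthogonal to the fibrewise kernel $m(\Gamma(\End E))$, the fibrewise Poincar\'e inequality on $\P^r$ (for $\bar\partial$ on $T\P^r$ orthogonal to the Lie algebra of $\mrm{PGL}(r+1)$) gives a ratio that is uniform in both $k$ and the basepoint in $B$, since the fibrewise Fubini--Study metric does not change with $k$. Second, for $\nu_H$ coming from $w\in\Gamma(TB)$, the Poincar\'e inequality for $\bar\partial_B$ on $TB$ (using $H^0(TB)=0$) together with the $h_k$-rescaling on $B$ produces a factor that is a negative power of $k$, accounting for the $k^{-2}$ in the statement. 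Third, on the finite-dimensional complement of $\mfh$ inside the vertical fibrewise kernel $m(\Gamma(\End \cE_0))$, the operator $\bar\partial_0$ is controlled from below by $\bar\partial_B$ on the corresponding element of $\Gamma(\End \cE_0)$ orthogonal to $H^0(\End \cE_0)$, which has a positive lowest eigenvalue by compactness of $B$ and ellipticity of $\bar\partial_B^*\bar\partial_B$. Combining these three estimates, absorbing the bounded cross terms coming from the curvature of the connection, and taking the worst scaling in $k$ yields the stated lower bound for $\nu$ in the $h_k$-orthogonal complement of $\mfh$.

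The main obstacle I anticipate is the bookkeeping when combining these estimates, especially isolating the finite-dimensional subspace of $\nu_V$ that lies in the fibrewise kernel but not in $\mfh$, where the global non-holomorphicity, controlled by $\bar\partial_B$ on $\End \cE_0$, is what saves the estimate. Keeping the constants uniform in both $k$ and $v$, and controlling the cross terms from the curvature of the connection so that they do not destroy the lower bounds obtained on the diagonal blocks, will be the principal technical difficulty.
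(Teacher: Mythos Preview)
Your approach differs substantially from the paper's, and the perturbation step as you describe it has a uniformity problem.

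The paper does \emph{not} decompose $\nu$ into horizontal and vertical parts and run three separate Poincar\'e inequalities. Instead it works entirely with the single fixed metric $h_1$: on the $h_1$-orthogonal complement of $\mfh$ one has a $k$-independent spectral gap $\lambda>0$ for $\bar\partial_0$, and by continuity in $v$ (at the $h_1$ level, which is $k$-independent) one gets $\|\bar\partial_v\eta\|^2_{L^2(h_1)}\geq\tfrac{\lambda}{2}\|\eta\|^2_{L^2(h_1)}$ for all sufficiently small $v$. The $k^{-2}$ then arises purely from the norm comparison between $h_1$ and $h_k$ (Lemma~\ref{lem:scalingcomparison}): the worst-case scaling on $TX$ and on $\Omega^1(TX)$ combine to give exactly $k^{-2}$. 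Finally, the passage from $h_1$-orthogonality to $g_{k,v}$-orthogonality is handled by projecting onto $\mfh$ and using that the $\eta_j\in\mfh$ are vertical, so their pointwise $h_k$-inner products are $k$-independent.

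Your Neumann-series reduction to $v=0$, carried out with respect to $h_k$, runs into the following difficulty: the spectral gap you would establish at $v=0$ is of order $k^{-1}$ (the square root of $k^{-2}$), while the perturbation $\bar\partial_v-\bar\partial_0$ has operator norm of order $|v|$ (or at best $|v|k^{-1/2}$, since $\gamma_v$ is a horizontal $(0,1)$-form). For the Neumann series to converge you would need $|v|$ small compared to $k^{-1}$, so the neighbourhood of $0$ in $\cV$ would shrink with $k$, contrary to the statement. The paper sidesteps this precisely by performing the $v$-perturbation at the $h_1$ level, where the spectral gap is a fixed positive constant.

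Your block decomposition could in principle be made to work at $v=0$, but you would still need to redo the perturbation in $v$ along the paper's lines (i.e.\ at a $k$-independent metric) to get uniformity. The paper's single-Poincar\'e-plus-scaling argument is both shorter and avoids the cross-term bookkeeping you anticipate.
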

The proof follows closely that of \cite[Lemma 6.6]{fine04}. We will use the following comparison of the $h_k$ and $h_1$-norms several times. 
\begin{lemma}
\label{lem:scalingcomparison}
The $L^2$-norms with respect to the $h_k$ satisfy
\begin{align*}
k^{-n-1} \|\nu\|^2_{h_k} \leq \|\nu\|^2_{h_1} &\leq k^{-n} \|\nu \|^2_{h_k} \\
k^{-n} \|\alpha\|^2_{h_k} \leq \|\alpha\|^2_{h_1} &\leq k^{1-n} \|\alpha \|^2_{h_k} \\
k^{-n-1} \|\tau\|^2_{h_k} \leq \|\tau\|^2_{h_1} &\leq k^{1-n} \|\tau \|^2_{h_k} 
\end{align*}
on $TX$, $\Omega^1(X)$ and $\Omega^1(TX)$, respectively.
\end{lemma}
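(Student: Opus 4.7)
The plan is to exploit the product structure of the metric $h_k = (k\omega_B) \oplus \omega|_V$ with respect to the splitting $TX = \pi^*TB \oplus V$. Since $h_k$ is block diagonal, the induced metric on any tensor bundle built from $TX$ and $T^*X$ is also block diagonal, and the $k$-scaling of each block is determined by how many horizontal tangent versus cotangent factors it contains. The Riemannian volume form of $h_k$ equals $k^n$ times that of $h_1$, and this is where the overall factor $k^n$ that appears in each of the three displayed inequalities ultimately comes from.

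For a vector field $\nu \in \Gamma(TX)$, decomposing $\nu = \nu_B + \nu_V$ under the splitting, one has pointwise $|\nu|^2_{h_k} = k|\nu_B|^2_{h_1} + |\nu_V|^2_{h_1}$, so $|\nu|^2_{h_1} \le |\nu|^2_{h_k} \le k|\nu|^2_{h_1}$. Integrating against $dV_{h_k} = k^n \, dV_{h_1}$ then yields the first pair of inequalities. For a one-form $\alpha = \alpha_B + \alpha_V$, dualising the metric gives $|\alpha|^2_{h_k} = k^{-1}|\alpha_B|^2_{h_1} + |\alpha_V|^2_{h_1}$, hence pointwise $k^{-1}|\alpha|^2_{h_1} \le |\alpha|^2_{h_k} \le |\alpha|^2_{h_1}$, and integrating against $k^n \, dV_{h_1}$ produces the second pair. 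Finally, for $\tau \in \Omega^1(TX) = T^*X \otimes TX$, the splitting produces four blocks: the $T^*B \otimes TB$ and $V^* \otimes V$ blocks are invariant under the scaling, while the $T^*B \otimes V$ and $V^* \otimes TB$ blocks scale by $k^{-1}$ and $k$ respectively. Therefore $k^{-1}|\tau|^2_{h_1} \le |\tau|^2_{h_k} \le k|\tau|^2_{h_1}$, and the third pair of inequalities follows from the same volume-form argument.

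There is no real obstacle in this argument: it is a direct bookkeeping of scale factors once the block decomposition of $h_k$ is written down. The only point requiring care is remembering that the metric on the cotangent bundle is the dual of the metric on the tangent bundle, which is what produces the asymmetric $k^{-1}$ versus $k$ scaling in the cotangent directions and hence the asymmetry between the upper and lower bounds appearing in the $\alpha$ and $\tau$ cases.
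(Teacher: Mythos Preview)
Your argument is correct and is essentially the same as the paper's: both establish the pointwise comparisons $k^{-1}|\nu|^2_{h_k}\le|\nu|^2_{h_1}\le|\nu|^2_{h_k}$ on $TX$, $|\alpha|^2_{h_k}\le|\alpha|^2_{h_1}\le k|\alpha|^2_{h_k}$ on $T^*X$, and combine them to get $k^{-1}|\tau|^2_{h_k}\le|\tau|^2_{h_1}\le k|\tau|^2_{h_k}$ on $\Omega^1(TX)$, then multiply by the volume scaling $\vol(h_k)=k^n\vol(h_1)$. Your explicit block-by-block description of the scaling on $T^*X\otimes TX$ is a slightly more detailed version of the paper's terse ``combining the two'' step, but the content is identical.
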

\begin{proof}
On $TX$, we have the following comparison of the pointwise norms
$$
k^{-1} |\nu|^2_{h_k} \leq |\nu|^2_{h_1} \leq |\nu |^2_{h_k}.
$$
On $\Omega^1(X)$, we have 
$$
|\beta|^2_{h_k} \leq |\beta|^2_{h_1} \leq k |\beta |^2_{h_k}.
$$
Combining the two gives the pointwise comparison 
$$
k^{-1} |\tau |^2_{h_k} \leq |\tau|^2_{h_1} \leq k |\tau |^2_{h_k}.
$$
of norms on $\Omega^1(TX)$. Since the volume form scales as $\vol(h_k) = k^n \vol(h_1),$ the result follows.
\end{proof}
We can now prove Lemma \ref{lem:dbarestimate}.
\begin{proof}[Proof of Lemma \ref{lem:dbarestimate}]
Let  $\Delta^h_v$ denote the Laplacian associated to the metric $h=h_1$ and operator $\dbar_v$, and let $\lambda$ be the first non-zero eigenvalue of the Laplacian $\Delta^h_0$. Note that the kernel of this Laplacian is the kernel of $\dbar_0$, which is $\mfh$. On the $h_1$-orthogonal complement to $\mfh$, $\lambda$ can be characterised as
\begin{align*}
\lambda = \inf_{\eta \in \mfh^{\perp}} \frac{\langle \Delta^h_0(\eta), \eta \rangle_h}{\| \eta \|^2_h},
\end{align*}
which is equivalent to the bound
$$
\| \dbar_0 (\eta) \|^2_{L^2(h_1)} \geq \lambda \| \eta \|^2_{L^2(h_1)}
$$
for all such $\eta$. Now, 
\begin{align*}
\langle \Delta^h_v(\eta), \eta \rangle_h =& \| \dbar_v (\eta) \|^2_{h_1} + \| \dbar^*_v(\eta) \|^2_{h_1},
\end{align*}
where the adjoint $\dbar^*$ is with respect to $h_1$. This varies continuously with $v$, and so, in particular, we see that we can ensure that
\begin{align*}
\inf_{\eta \in \mfh^{\perp}} \frac{\langle \Delta^h_v(\eta), \eta \rangle_h}{\| \eta \|^2_h}  \geq \frac{\lambda}{2}
\end{align*}
for all sufficiently small $v$, which in turn implies the bound 
\begin{align}
\label{eq:vPI}
\| \dbar_v (\eta) \|^2_{L^2(h_1)} \geq \frac{\lambda}{2} \| \eta \|^2_{L^2(h_1)}
\end{align}
on the $h_1$-orthogonal complement to $\mfh$ for all such $v$.

Next, let $\xi \in \mfh$ be such that $\nu - \xi$ is $h_1$-orthogonal to $\mfh$. If $\eta_1, \ldots, \eta_d$ is an $L^2$-orthonormal basis of $\mfh$ with respect to $h_1$, then the map $\nu \mapsto \xi$ is given by
\begin{align*}
\nu \mapsto \sum_j \left(\int_X \langle \nu, \eta_j \rangle_{h_1} \vol(h_1)\right) \eta_j.
\end{align*}
Since the $\eta_j$ are sections of the vertical tangent bundle, their pointwise $h_k$-norms and inner products are independent of $k$. This the $\eta_j$ form an $L^2$-orthogonal basis with respect to the $h_k$-norm, and the norm is the scaling factor $k^n$ of the volume element. We therefore see that there is a $C_1>0$ such that
\begin{align*}
\| \xi \|^2_{L^2(h_k)} =& \sum_j \int_X \left(\int_X \langle \nu, \eta_j \rangle_{h_1} \vol(h_1)\right)^2 |\eta_j|_{h_k}^2 \vol(h_k) \\
=&  k^{-2n} \sum_j \int_X \left(\int_X \langle \nu, \eta_j \rangle_{h_k} \vol(h_k)\right)^2 |\eta_j|_{h_k}^2 \vol(h_k) \\
=& k^{-n} \sum_j \frac{\langle \nu , \eta_j \rangle_{L^2(h_k)}}{\| \eta_j \|_{L^2(h_k)}} \\
\leq& C_1 k^{-n} \| \nu \|^2_{L^2(h_k)},
\end{align*}
since the penultimate line is the $L^2(h_k)$-orthogonal projection of $\nu$ to $\mfk$.
Now,
\begin{align*}
\| \dbar_v \nu \|^2_{L^2(g_{k,v})} 
\geq& C_2 \| \dbar_v \nu \|^2_{L^2(h_k)},
\end{align*}
for some constant $C_2>0$, by Lemma \ref{lem:uniformmetrics}. Combining this with the above and using that $\dbar_0$ vanishes on $\mfk$, we see that by restricting to sufficiently small $v$, we can ensure that 
\begin{align}
\label{eq:nubound}
\| \dbar_v \nu \|^2_{L^2(g_{k,v})} \geq& C_2 \left( \| \dbar_v (\nu-\xi) \|^2_{L^2(h_k)} - \frac{\lambda}{4}k^{-n} \| \nu \|^2_{L^2(h_k)} \right).
\end{align}

We now seek to estimate $\| \dbar_v (\nu-\xi) \|^2_{L^2(h_k)}$ from below. By Lemma \ref{lem:scalingcomparison}, $$\| \dbar_v (\nu-\xi) \|^2_{L^2(h_k)} \geq k^{n-1} \| \dbar_v (\nu-\xi) \|^2_{L^2(h_1)}.$$
Using the bound \eqref{eq:vPI}, we therefore get that
\begin{align*}
\| \dbar_v (\nu-\xi) \|^2_{L^2(h_k)} \geq  \frac{\lambda}{2}k^{n-1}  \| \nu-\xi \|^2_{L^2(h_1)},
\end{align*}
and so, using Lemma \ref{lem:scalingcomparison} again to convert to a bound for the $h_k$, we get that
\begin{align*}
\| \dbar_v (\nu-\xi) \|^2_{L^2(h_k)} \geq  \frac{\lambda}{2} k^{-2} \| \nu-\xi \|^2_{L^2(h_k)}.
\end{align*}
Combining this with the bound \eqref{eq:nubound}, we get that 
\begin{align*}
\| \dbar_v \nu \|^2_{L^2(g_{k,v})} \geq&  \frac{C_2\lambda}{2}  \left( k^{-2} \| \nu-\xi \|^2_{L^2(h_k)} - \frac{1}{2}k^{-n} \| \nu \|^2_{L^2(h_k)} \right).
\end{align*}
Since by Lemma \ref{lem:uniformmetrics} $h_k$ and $g_{k,v}$ are uniformly equivalent, we therefore get that there is a $C>0$ such that 
\begin{align*}
\| \dbar_v \nu \|^2_{L^2(g_{k,v})} \geq&  2C \left( k^{-2} \| \nu-\xi \|^2_{L^2(g_{k,v})} - \frac{1}{2}k^{-n} \| \nu \|^2_{L^2(g_{k,v})} \right)
\end{align*}
for all sufficiently small $v$ and sufficiently large $k$.
But since $\nu$ is $g_{k,v}$-orthogonal to $\xi$, we have that 
\begin{align*}
\| \nu-\xi \|^2_{L^2(g_{k,v})} =& \| \nu \|^2_{L^2(g_{k,v})} + \| \xi \|^2_{L^2(g_{k,v})}\\
\geq& \| \nu \|^2_{L^2(g_{k,v})}.
\end{align*}
Thus
\begin{align*}
\| \dbar_v \nu \|^2_{L^2(g_{k,v})} 
\geq&  2C \left( k^{-2} \| \nu  \|^2_{L^2(g_{k,v})} - \frac{1}{2}k^{-n} \| \nu \|^2_{L^2(g_{k,v})} \right) \\
\geq& Ck^{-2}  \| \nu  \|^2_{L^2(g_{k,v})},
\end{align*}
since $n\geq 2$. This is what we wanted to show.
\end{proof}

We now combine the pieces as in \cite[Lemma 6.7]{fine04}. Recall that $\bar \mfk$ consists of the fibrewise holomorphy potentials induced by holomorphic sections of $\End \cE_0$ through $m$, which by Lemma \ref{lem:automs} are the global fibrewise holomorphy potentials with respect to $\omega_k$ for all $k$, with respect to the complex structure when $v=0$. 
\begin{lemma}
\label{lem:DPI}
There exists a $C>0$ such that for all $\phi$ that are $g_{k,v}$-orthogonal to $\bar \mfk \subset C^{\infty}(X)$, 
$$
\| \mathcal{D}_{k,v} \phi \|^2_{L^2(g_{k,v})} \geq C k^{-3} \| \phi \|^2_{L^2(g_{k,v})}
$$
for all $v$ that are sufficiently close to $0$ and sufficiently large $k$.
\end{lemma}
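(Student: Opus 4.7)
The plan is to follow the strategy of \cite[Lemma 6.7]{fine04}, combining the Poincar\'e inequality for $d$ on functions from Lemma \ref{lem:dPI} with the Poincar\'e inequality for $\dbar_v$ on vector fields from Lemma \ref{lem:dbarestimate}. Set $\nu = \nabla^{1,0}_{g_{k,v}} \phi$ so that $\mathcal{D}_{k,v} \phi = \dbar_v \nu$, and note that on a K\"ahler manifold $\|\nu\|^2_{L^2(g_{k,v})} = \tfrac{1}{2} \|d \phi\|^2_{L^2(g_{k,v})}$. Decompose $\nu = \eta + \nu'$ where $\eta$ is the $L^2(g_{k,v})$-orthogonal projection onto $\mfh$ and $\nu' \perp \mfh$. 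Since $\eta \in \mfh = \ker \dbar_0$, we have $\dbar_v \eta = (\dbar_v - \dbar_0)\eta$, which is first order in $|v|$; because $\mfh$ is finite-dimensional, all norms on it are equivalent, and the $k$-scaling of Lemma \ref{lem:scalingcomparison} together with uniform equivalence from Lemma \ref{lem:uniformmetrics} makes this error bound uniform in $k$ and $v$.

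The main step is to show that the hypothesis $\phi \perp \bar \mfk$ forces $\|\eta\|_{L^2(g_{k,v})}$ to be small compared to $\|\nu\|_{L^2(g_{k,v})}$, uniformly in $k$. Under our hypotheses, $\Aut(B,L)$ is discrete and $\cE_0$ is polystable, so by Lemma \ref{lem:automs} the space $\mfh$ consists entirely of vertical holomorphic vector fields arising from endomorphisms via the infinitesimal action. For $\sigma \in \mfk$, the corresponding Killing vector field $\xi_\sigma$ has Hamiltonian $\widetilde h_{\sigma} \in \bar \mfk$ with respect to $\omega_k$, and one has the K\"ahler identity $\nabla^{1,0} \widetilde h_{\sigma} = -i \xi_{\sigma}^{1,0}$. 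Integrating by parts against $\nu = \nabla^{1,0} \phi$ expresses the Hermitian pairing $\langle \nu, \xi_\sigma^{1,0}\rangle_{L^2(g_{k,v})}$ in terms of $\int_M \phi \,\Delta_{g_{k,v}} \widetilde h_\sigma \, \omega_k^{n+r}$. Since $\widetilde h_\sigma$ is a fibrewise Fubini--Study holomorphy potential, it is an eigenfunction of the vertical Laplacian, and the horizontal part of $\Delta_{g_{k,v}}$ contributes only at order $k^{-1}$. Combined with the orthogonality $\phi \perp \bar \mfk$, this yields $|\langle \nu, \xi_\sigma^{1,0}\rangle_{L^2(g_{k,v})}| \leq C k^{-1} \|\phi\|_{L^2(g_{k,v})} \|\widetilde h_\sigma\|_{L^2(g_{k,v})}$, which translates into a bound on $\|\eta\|_{L^2(g_{k,v})}$ that is dominated by $\|\nu\|_{L^2(g_{k,v})}$ for $k$ large.

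Once $\|\nu'\|^2_{L^2(g_{k,v})} \geq c \|\nu\|^2_{L^2(g_{k,v})}$ has been obtained, the combination is immediate: Lemma \ref{lem:dbarestimate} gives $\|\dbar_v \nu'\|^2_{L^2(g_{k,v})} \geq C k^{-2} \|\nu'\|^2_{L^2(g_{k,v})}$; Lemma \ref{lem:dPI} gives $\|d\phi\|^2_{L^2(g_{k,v})} \geq C k^{-1} \|\phi\|^2_{L^2(g_{k,v})}$ (using that $\phi$ has mean value zero, which is guaranteed by $\phi \perp \bar \mfk$ since constants lie in $\bar \mfk$ via $m(\Id) = -1$). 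Chaining these inequalities yields $\|\dbar_v \nu'\|^2_{L^2(g_{k,v})} \geq C k^{-3} \|\phi\|^2_{L^2(g_{k,v})}$, and the $O(|v|^2 \|\eta\|^2)$ contribution from $\dbar_v \eta$ can be absorbed for $|v|$ sufficiently small, giving the desired bound on $\|\mathcal{D}_{k,v}\phi\|^2_{L^2(g_{k,v})}$.

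The main obstacle is the control of $\eta$: one must track the $k$-dependence in the integration by parts carefully, using that the Fubini--Study Laplacian on a fibre has holomorphy potentials as eigenfunctions, so that the leading-order term vanishes against $\phi \perp \bar \mfk$ and only controlled horizontal corrections survive. Once this reduction is in place, the two previous Poincar\'e inequalities combine in the expected manner to produce the $k^{-3}$ scaling characteristic of the Lichnerowicz operator in the adiabatic regime.
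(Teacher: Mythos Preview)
Your approach is correct but substantially more elaborate than the paper's. The paper does not decompose $\nu = \nabla^{1,0}_{g_{k,v}}\phi$ at all: it asserts that when $\phi$ is $g_{k,v}$-orthogonal to $\bar\mfk$, the gradient $\nabla_{g_{k,v}}\phi$ is \emph{already} $g_{k,v}$-orthogonal to $\mfh$, so Lemma~\ref{lem:dbarestimate} applies directly to $\nu$ itself. The argument is then two lines:
\[
\|\mathcal{D}_{k,v}\phi\|^2 \geq C' k^{-2}\|\nabla_{g_{k,v}}\phi\|^2 = C' k^{-2}\|d\phi\|^2 \geq C k^{-3}\|\phi\|^2,
\]
using Lemma~\ref{lem:dPI} for the last step. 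The paper justifies the orthogonality by noting that the part of $\mfh$ not arising from $\bar\mfk$ consists of holomorphic vector fields without holomorphy potentials, and any gradient is automatically orthogonal to such a field.

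Your route---projecting $\nu$ onto $\mfh$, bounding the projection $\eta$ via the fact that $\widetilde h_\sigma$ is a vertical-Laplacian eigenfunction, and absorbing $\|\bar\partial_v\eta\|$---is more work but also more explicit about one point the paper leaves terse. For the gradient elements $\nabla_{g_{k,0}}h_\sigma$ of $\mfh$ one has $\langle \nabla_{g_{k,v}}\phi, \nabla_{g_{k,0}}h_\sigma\rangle_{L^2} = \langle \phi, \Delta_{g_{k,0}}h_\sigma\rangle_{L^2}$, and this vanishes exactly only if $\Delta_{g_{k,0}}(\bar\mfk)\subseteq \bar\mfk$; since $\Delta_{g_{k,0}}h_\sigma = \lambda h_\sigma + O(k^{-1})$, your decomposition handles the $O(k^{-1})$ residual directly rather than relying on exact orthogonality. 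So you trade the paper's brevity for a more robust accounting; both arrive at the same $k^{-3}$ bound.
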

\begin{proof}
If $\phi$ is $g_{k,v}$-orthogonal to $\bar \mfk$, then $\phi$ has mean value $0$ with respect to $\omega_k$, and $\nabla_{g,k} \phi$ is $g_{k,v}$-orthogonal to $\mfh$ -- note that $\mfh$ may be larger than $\bar \mfk$, but any gradient vector field is automatically orthogonal to a holomorphic one that is also orthogonal to $\bar \mfk$. Hence Lemmas \ref{lem:dPI} and \ref{lem:dbarestimate} apply to give that for some constants $C', C>0$,
\begin{align*}
\| \mathcal{D}_{k,v} \phi \|^2_{L^2(g_{k,v})} \geq&C'k^{-2} \| \nabla_{g,k} \phi \|^2_{L^2(g_{k,v})}  \\
=& C'k^{-2} \| d \phi \|^2_{L^2(g_{k,v})}  \\
\geq& C k^{-3} \| \phi \|^2_{L^2(g_{k,v})},
\end{align*}
as required.
\end{proof}
\begin{remark}
At $v=0$, this is a lower bound for the first eigenvalue of the Lichnerowicz operator. However, in general this is not a lower bound for the first eigenvalue of the Lichnerowicz operator of $(\omega_k, J_v)$, but rather a bound for the first eigenvalue orthogonally to $\mfk$. When $v \neq 0$, the kernel of the Lichnerowicz operator gets smaller, and the new elements that are not in the kernel contributes to the first eigenvalue in a manner that depends on the ``discrepancy order" \cite{sektnantipler24} between $E$ and its subbundles coming from a Jordan--H\"older filtration, which was a crucial feature in \cite{sektnantipler24}. In our approach to semistable perturbation problems, we do not have to consider this.
\end{remark}

The bound for the right inverse to the Lichnerowicz operator follows from the above Poincar\'e inequality and the following Schauder estimate.
\begin{lemma}
\label{lem:schauder}
For each $q$, there exists a constant $C>0$ such that for all sufficiently small $v$ and sufficiently large $k$, 
$$
\| \phi \|_{L^2_{q+4}(g_{k,v})} \leq C \left( \| \phi \|_{L^2(g_{k,v})} + \| L \phi \|_{L^2_{q}(g_{k,v})}\right),
$$
where $L$ can be either the Lichnerowicz operator of $g_{k,v}$ or the actual linearisation $L_{k,v}$ of the scalar curvature operator at $g_{k,v}$.
\end{lemma}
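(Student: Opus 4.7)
My plan is to reduce the estimate to classical interior $L^2$ elliptic regularity by a rescaling argument adapted to the adiabatic geometry, exploiting the uniform equivalence of $g_{k,v}$ with the product metric $h_k = (k\omega_B) \oplus (\omega|_V)$.

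First, by Lemma \ref{lem:uniformmetrics}, the metrics $g_{k,v}$ and $h_k$ are uniformly equivalent on any tensor bundle for all $|v| < \delta$ and $k$ sufficiently large, so the Sobolev norms $L^2_q(g_{k,v})$ and $L^2_q(h_k)$ are uniformly equivalent. It therefore suffices to prove the estimate with $h_k$ in place of $g_{k,v}$.

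Second, cover $B$ by a finite collection of $\omega_B$-coordinate balls $U_\alpha$ of fixed radius on which $\cE$ is smoothly trivialised, and form corresponding product-type charts $U_\alpha \times F \subset X$, where $F$ is a fixed model fibre. Rescale the base coordinates by $z \mapsto \sqrt{k}\,z$. In the rescaled coordinates, $h_k$ becomes uniformly equivalent to $h_1$ on a domain of fixed size, and the operator $L$ -- whether the Lichnerowicz operator $\m{D}^*_{g_{k,v}} \m{D}_{g_{k,v}}$ or the full linearisation of the scalar curvature -- becomes a fourth-order elliptic operator whose coefficients and all their derivatives are uniformly bounded in $k$ and $v$. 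Indeed, those coefficients are universal polynomials in the Christoffel symbols, Riemann curvature, Ricci tensor and scalar curvature of $g_{k,v}$, all of which have uniform bounded-geometry estimates in frames adapted to the fibration after the rescaling.

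Third, classical interior $L^2$ elliptic regularity applied in each rescaled chart gives
$$\| \phi \|_{L^2_{q+4}} \leq C\left( \| \phi \|_{L^2} + \| L \phi \|_{L^2_{q}} \right)$$
with a constant independent of $k$ and $v$. Summing over a finite subcover, undoing the rescaling -- which amounts to keeping track of powers of $k$ coming from the volume form and from differentiation in the base directions, all of which match on both sides because $L$ is a homogeneous fourth-order operator -- and converting back to $g_{k,v}$-norms via Lemma \ref{lem:uniformmetrics} yields the claim. The main technical point, and essentially the only non-routine one, is the uniform control on the coefficients of $L$ in the rescaled picture; this is the standard bounded-geometry package in the adiabatic setting, following the arguments of Fine \cite{fine04} and its generalisations in subsequent works on perturbative constructions of cscK metrics on fibrations.
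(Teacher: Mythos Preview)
Your proposal is correct and follows essentially the same route as the paper, which simply defers to Fine \cite[Lemmas 5.9 and 6.8]{fine04}: uniform equivalence with the product metric $h_k$, adiabatic rescaling of base coordinates to obtain bounded geometry, local interior elliptic estimates, and patching. The paper adds one remark you might incorporate: rather than treating both operators in parallel, it suffices to bound one of them, since the difference $\tfrac{1}{2}\langle \nabla \Scal(g_{k,v}), \nabla \phi\rangle$ is first order with coefficient of size $O(k^{-1}|v|)$ (the scalar curvature being constant to leading order), and hence is harmlessly absorbed; this also clarifies your slightly loose phrase about $L$ being ``homogeneous fourth-order'', which is not literally true of the full linearisation but is true of its principal part, the lower-order terms scaling no worse under the rescaling.
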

The proof is similar to \cite[Lemma 5.9 and 6.8]{fine04}. 
Note bounding one of the two operators is sufficient, because the scalar curvature is constant to leading order in $k$ and so is $O(k^{-1}|v|)$ in $k$ and $v$.

With this in place, we can now prove Proposition \ref{prop:lichnerowicz}, similarly to \cite[Theorem 6.9]{fine04}.
\begin{proof}[Proof of Proposition \ref{prop:lichnerowicz}]
Since $\ker \mathcal{D}_{g_{k,v}} \subseteq \ker \mathcal{D}_{g_{k,0}} = \bar \mfk$, the operator $L_{k,v}$ is surjective. We take the right inverse to be the one which gives the $L^2(g_{k,v})$-orthogonal representative to $\bar \mfk$ in the $\phi$-variable. Let now $(\psi, \sigma) = Q_{k,v} \phi$. Note that $\sigma$ is the $L^2(g_{k,v})$-orthogonal projection of $\phi$ to $\bar \mfk$, and so
$$
\| \sigma \|_{L^2(g_{k,v})} \leq \| \phi \|_{L^2(g_{k,v})}.
$$
Moreover, since $\sigma$ lives in the finite dimensional space $\bar \mfk$, we can for each $j$ mutually bound the $C^{j+4, \alpha}$-norm of $\sigma$ with the $L^2$-norm. 

From the Schauder estimate of Lemma \ref{lem:schauder}, we get that
\begin{align*}
\| \psi \|_{L^2_{q+4}(g_{k,v})} \leq C \left( \| \psi \|_{L^2(g_{k,v})} + \| \phi \|_{L^2_{q}(g_{k,v})}\right).
\end{align*}
By Lemma \ref{lem:DPI}, 
\begin{align*}
 \| \psi \|^2_{L^2(g_{k,v})} \leq& Ck^3 \| \mathcal{D}_{k,v} \psi \|^2_{L^2(g_{k,v})} \\
=&  Ck^3 \langle \mathcal{D}^*_{k,v} \mathcal{D}_{k,v} \psi, \psi \rangle_{L^2(g_{k,v})} \\
=& Ck^3 \langle \phi, \psi \rangle_{L^2(g_{k,v})} \\
\leq& Ck^3 \| \phi \|_{L^2(g_{k,v})} \| \psi \|_{L^2(g_{k,v})},
\end{align*} 
which gives that 
\begin{align*}
 \| \psi \|_{L^2(g_{k,v})} \leq& Ck^3 \| \phi \|_{L^2(g_{k,v})}.
\end{align*} 
So
\begin{align*}
\| Q_{k,v} \phi \|_{L^2(g_{k,v})} &\leq \| \psi \|_{L^2(g_{k,v})} + \| \sigma \|_{L^2(g_{k,v})}\\
&\leq C k^3 \| \phi \|_{L^2(g_{k,v})} + \| \phi \|_{L^2(g_{k,v})} \\
&\leq (C+1)k^3 \| \phi \|_{L^2(g_{k,v})}.
\end{align*}
Combined with the Schauder estimate above, this gives the result.
\end{proof}

\subsection{The Laplacian on the vector bundle}

We also require a similar bound for the Laplacian on the vector bundle, which is the linearised operator for the contraction of the curvature on $E$.
\begin{proposition}
\label{prop:bundleLaplacian}
There exists $C, \delta > 0$ such that for all $v$ such that $|v| \leq \delta$ the map
$$
L^2_{q} (\End E) \times \mf{k} \to L^2_{q}(\End E)
$$
given by
$$
(s, \sigma) \mapsto \Delta_{v} (s) - \sigma
$$
is surjective and has right inverse $R_{v}$ satisfying 
$$
\| R_{v} \| \leq C.
$$
\end{proposition}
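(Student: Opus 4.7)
This proposition is the bundle-level analogue of Proposition \ref{prop:lichnerowicz}, with no adiabatic parameter $k$, so the argument is a much simpler Fredholm perturbation: the only real content is showing that the kernel does not jump upward as $v$ moves off $0$, and that the spectral gap persists uniformly. I would proceed in three steps.

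\textbf{Step 1 (model at $v=0$).} At $v=0$ the Chern Laplacian $\Delta_0$ on sections of $\End E$ is a second-order formally self-adjoint elliptic operator, whose $L^2$-kernel is the space $\mfg = H^0(B, \End \cE_0)$ of $\db_0$-holomorphic endomorphisms; by definition this kernel contains $\mfk$ with the right dimension to complement $\Image \Delta_0$ inside $L^2_q(\End E)$. Splitting $L^2_q(\End E) = \mfk \oplus \mfk^{\perp}$, the restriction of $\Delta_0$ to $\mfk^{\perp}$ is an isomorphism onto $\mfk^{\perp}$ (after the usual two-order Sobolev shift), so standard elliptic theory produces a bounded right inverse $R_0$ of the map $(s,\sigma)\mapsto \Delta_0(s)-\sigma$: given $\phi = \phi_{\mfk} + \phi_{\mfk^\perp}$, set $\sigma = -\phi_{\mfk}$ and $s = \Delta_0^{-1}\phi_{\mfk^\perp}$.

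\textbf{Step 2 (perturbation in $v$).} Using the expansion \eqref{Eq:expansion_curvature_vb} of the curvature $F_v$ of $\nabla_v$, together with Lemma \ref{Lemma:linearisationHE}, the operator $\Delta_v$ depends smoothly on $v$ in the operator norm topology on $L^2_{q+2}\to L^2_q$. By Kuranishi theory (\S\ref{subsubsec:kuranishi_vb}) any $\db_v$-holomorphic endomorphism of $\cE_v$ deforms from one of $\cE_0$, so $\dim\ker\Delta_v\le \dim\ker\Delta_0$ for $|v|$ small; equivalently, the spectrum of $\Delta_v$ is upper semi-continuous at $0$ and the first non-zero eigenvalue stays uniformly bounded below on a neighbourhood $|v|\le\delta$. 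The $L^2$-orthogonal projections $\pi_v^{\parallel}, \pi_v^{\perp}$ onto $\ker\Delta_v$ and its complement vary continuously in $v$, so the same prescription as in Step 1, but with $\Delta_v^{-1}$ on $(\ker\Delta_v)^{\perp}$, produces a right inverse $R_v$. Since $\mfk$ is finite dimensional and transverse to $(\ker\Delta_v)^{\perp}$ for small $v$ (again by semi-continuity), the $\sigma$-component is obtained as $-\pi_{\mfk}\phi$ plus a small correction that stays uniformly bounded.

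\textbf{Step 3 (uniform Sobolev bound).} The full $L^2_q$-bound on $R_v$ then follows from standard elliptic (Schauder) estimates for $\Delta_v$, whose constants depend continuously on the coefficients of the operator and hence are uniform for $|v|\le\delta$. Shrinking $\delta$ if necessary, one obtains the stated bound $\|R_v\|\le C$ independent of $v$. The only delicate point in the plan is ensuring that no eigenvalues of $\Delta_v$ drift into $0$ from $\Delta_0^{-1}(\mfk^\perp)$; this is where the hypothesis that $\cE_0$ is the graded object of $\cE$ -- and in particular that $\mfk\subseteq\mfg$ is preserved by deformation -- enters, and it is where I would expect any real work, though given the finite-dimensionality of $\mfg$ and the smoothness of the family $\Delta_v$, I expect a short argument by contradiction using compactness of the unit sphere in $\mfg$ suffices.
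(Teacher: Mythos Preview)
Your proposal is correct and takes essentially the same approach as the paper: the paper's own proof is a two-line sketch invoking a Poincar\'e inequality $\|\dbar_v s\|^2 \geq C\|s\|^2$ on the orthogonal complement of $\mfk$ (obtained exactly as in Lemma~\ref{lem:dbarestimate}, but with no $k$-dependence since $\omega_B$ is fixed) together with a Schauder estimate for $\Delta_v$, which is precisely your spectral-gap-plus-elliptic-regularity argument rephrased. The only cosmetic difference is that you organise Step~2 around the varying kernel $\ker\Delta_v$ and semi-continuity, whereas the paper works orthogonally to the fixed $\mfk$ throughout; both routes are equivalent here.
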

\begin{proof}
The proof is very similar to the proof of Proposition \ref{prop:linearisation}, but easier since the metric $\omega_B$ on $B$ is fixed. Since $\mfk$ is the kernel of the Laplacian of $\Delta_0$, we can use a Poincar\'e inequality for the $\dbar_0$-Laplacian to obtain the uniform estimate
$$
\| \bar \partial_v s \|^2_{L^2(\omega_B)} \geq C \| s \|^2_{L^2(\omega_B)}
$$
for all $s$ that are orthogonal to $\mfk$, similarly to Lemma \ref{lem:dbarestimate}. Together with a Schauder estimate for $\Delta_v$ analogous to Lemma \ref{lem:schauder}, this gives the required bound.
\end{proof}

\section{Reduction to a finite dimensional moment map problem}
\label{sec:reduction}

We will now go back to consider a holomorphic vector bundle $\cE \to B$  of rank $r+1$ which is slope semistable with respect to some ample line bundle $L \to B$, and let $X=\mathbb{P}(\cE) \to B$ be the projectivisation of $\cE$. As in \S \ref{sec:projvb}, let $M$ be the smooth underlying manifold.

\subsection{The moment map for the initial metrics}
Let $\cV$ be the Kuranishi space about $\cE_0$ as in Section \ref{Subsec:diffgeomvectorbundles} and $\Phi : \cV \to \cJ_\pi$ be the Kuranishi map \eqref{eq:Kuranishimap_proj}.
We denote by $\m{V}^{\mrm{int}}$ the complex subspace of $\m{V}$ parametrising integrable complex structures.

\begin{definition}[Weil--Petersson form]\label{Def:WeilPetersson}
Let $Y \to V$ be a proper holomorphic submersion of relative dimension $r$ and let $\beta$ be a fibrewise K\"ahler metric. Let $\rho$ be the relative Ricci form induced by $\beta$, i.e.\ the curvature of the Hermitian metric induced by $\beta$ on $-K_{Y/V}$. The Weil--Petersson form induced by $\beta$ is the 2-form on $V$
\[
\alpha_{WP} = - \int_{Y/V} \rho \wedge \beta^{r} + \frac{1}{r+1} \widehat{S}_x \int_{Y/V}\beta^{r+1},
\]
where $\widehat{S}_x$ denotes the average scalar curvature on the fibre $Y_x $, which is independent of $x$.
\end{definition}

We show that there is a closed two form $\alpha_k$, depending on $k$, on $\cV$, and a corresponding moment map for the $K$-action on $\cV$ such that having a zero of the moment map is related to solving the cscK problem on the corresponding fibre. More precisely, we apply the following.

\begin{theorem}[{\cite[Theorem 4.6]{DervanHallam23}}]
\label{thm:dervanhallam}
Suppose $Y \to V$ is a holomorphic submersion, with compact fibres of dimension $m$. Suppose that a group K acts on $Y$ and $V$ such that the submersion is $K$-equivariant.  Suppose that $\beta$ is a relatively K\"ahler metric on $Y$ and that $\tau$ is a moment map with respect to $\beta$ of the $K$-action on $Y$. Then the map $\mu : V \to \mathfrak{k}^*$ defined by
$$
\langle \mu , \nu \rangle (v) = \int_{y \in Y_v} \langle \tau (y), \nu \rangle (\Scal(\beta_v) - \hat S) \beta^r
$$
is a moment for the $K$-action on $V$, with respect to the closed two form $\alpha_{WP}$.
\end{theorem}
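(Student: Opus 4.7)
The plan is to verify the three defining properties of a moment map: that $\alpha_{WP}$ is closed, that $\mu$ is $K$-equivariant, and that for every $\nu \in \mathfrak{k}$,
\begin{equation*}
d\langle \mu, \nu \rangle = \iota_{\eta_\nu} \alpha_{WP}.
\end{equation*}
The $K$-equivariance of $\mu$ is immediate from the $K$-equivariance of $\tau$, $\beta$ and the submersion. For closedness, I would recognise $\alpha_{WP}$ as (a constant multiple of) the first Chern form of the CM line bundle on $V$, constructed via Deligne pairings of $-K_{Y/V}$ with the relatively ample line bundle polarising the fibres and equipped with the Quillen-type metric induced by $\beta$; a direct Chern--Weil computation then reproduces $\alpha_{WP}$ as its curvature, making closedness automatic.

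The core of the argument is the moment map equation. I would write $\alpha_{WP} = \pi_* \widetilde{\alpha}$ with $\widetilde{\alpha} = -\rho \wedge \beta^m + \tfrac{\widehat{S}}{m+1}\beta^{m+1}$ on $Y$. The $K$-equivariance gives $\pi_* \nu_\nu = \eta_\nu$, and the push-pull identity
\begin{equation*}
\iota_{\eta_\nu}\pi_*\widetilde{\alpha} = \pi_*(\iota_{\nu_\nu}\widetilde{\alpha})
\end{equation*}
reduces the problem to computing $\pi_*(\iota_{\nu_\nu}\widetilde{\alpha})$. Expanding the interior product via the graded Leibniz rule introduces $\iota_{\nu_\nu}\beta = -d\langle \tau, \nu\rangle$ (directly from the hypothesis that $\tau$ is a moment map on $Y$) together with $\iota_{\nu_\nu}\rho$, which by a standard Bando--Futaki computation is exact on each compact fibre, with potential determined by $\langle \tau, \nu \rangle$ and its fibrewise Laplacian; this uses that $\rho$ is closed and $K$-invariant, so $\iota_{\nu_\nu}\rho$ is itself closed.

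Substituting these expressions and integrating by parts fibrewise (there are no boundary terms since the fibres are compact), the trace identity $m\rho \wedge \beta^{m-1} = \Scal(\beta)\beta^m$ on fibres converts Ricci factors into scalar curvature factors. The constant $\tfrac{\widehat{S}}{m+1}$ in $\widetilde{\alpha}$ is calibrated precisely so that the resulting expression assembles into
\begin{equation*}
\pi_*(\iota_{\nu_\nu}\widetilde{\alpha}) = d\!\int_{Y/V} \langle \tau, \nu \rangle (\Scal(\beta_v) - \widehat{S})\beta^m = d\langle \mu, \nu \rangle,
\end{equation*}
which is the desired moment map property.

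The main obstacle is the careful book-keeping of horizontal versus vertical components throughout the computation. The decomposition of $\nu_\nu$ depends on an auxiliary choice of horizontal distribution on $Y \to V$, whereas the final identity must be intrinsic; verifying that the auxiliary contributions either cancel by antisymmetry or reassemble into the intrinsic fibre integrals is the most delicate step. This is essentially equivalent to establishing the first-variation formulas for the relative scalar curvature in a family, in the spirit of the Donaldson--Fujiki moment map picture, now adapted to the fibred setting in which both the complex structure of the fibre and its K\"ahler form may vary.
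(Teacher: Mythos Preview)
The paper does not contain a proof of this statement. Theorem~\ref{thm:dervanhallam} is quoted verbatim from \cite[Theorem~4.6]{DervanHallam23} and used as a black box; the only commentary the paper adds is a remark about sign conventions. There is therefore no ``paper's own proof'' to compare your proposal against.

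That said, your outline is a reasonable sketch of how the Dervan--Hallam argument proceeds: fibre-integrate an equivariantly closed form on the total space, use the push--pull compatibility $\iota_{\eta_\nu}\pi_*=\pi_*\iota_{\nu_\nu}$ for projectable vector fields, and unwind the contractions via the moment map condition on $Y$ and the identity relating the relative Ricci form to the fibrewise scalar curvature. The one place where your sketch is vaguer than it should be is the treatment of $\iota_{\nu_\nu}\rho$: asserting that this is ``exact on each compact fibre'' with potential determined by a Bando--Futaki computation is correct in spirit, but the actual bookkeeping requires knowing that $\rho$ is itself the curvature of a hermitian metric on $-K_{Y/V}$ and hence admits an equivariant refinement whose contraction is an explicit $d$ of something on $Y$, not merely fibrewise. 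Without that, the horizontal contributions you flag as ``the main obstacle'' do not obviously cancel. If you intend to include a proof rather than a citation, this is the step that needs to be written out carefully.
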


\begin{remark}
In the statement of Theorem \ref{thm:dervanhallam} we have the opposite sign compared to Dervan--Hallam's original statement. This is because we have chosen the definition \eqref{eq:infinitesimalvectorfield} for the infinitesimal vector field, while they chose to take the opposite sign. More concretely, this is reflected in the fact that the test configuration described in Remark \ref{Rmk:explicit_testconf}, which is the one to which we eventually apply Theorem \ref{thm:dervanhallam}, is induced by the element $-i\mrm{Id}_S \in \mf{k}$.
\end{remark}

We wish to apply the above for the action of $K$ on the universal family $\cU\to\cV$ defined in \S\ref{subsubsec:kuranishi_proj}: we begin by restricting the universal family to $\m{V}^{\mrm{int}}$, so that $\m{U}\to\m{V}^{\mrm{int}}$ is a holomorphic submersion, and then we extend it to the whole $\m{V}$, which is smooth.

The relatively K\"ahler form is given by the pull-back of $\omega_k$ from $M$ via $\mrm{pr}_2 : \m{V}\times M \to M$, where we have used that $\m{U}$ is diffeomorphic to the product $\m{V}\times M$.
Let us denote it by $\omega_{\m{U},k}$.
We can write 
\[
\omega_{\m{U},k} = \widehat{\omega} + k\omega_B
\]
where $\widehat{\omega}$ is the pullback to $\m{U}$ of the relatively K\"ahler form $\omega$ on $M \to B$.
Let $\alpha_k$ be the Weil--Petersson metric induced on $\m{V}^{\mrm{int}}$ by $\omega_k$ following Definition \ref{Def:WeilPetersson}.

\begin{lemma}
There exists a Weil--Petersson K\"ahler form on $\m{V}$ that on $\m{V}^{\mrm{int}}$ takes the form
\[
\alpha_k = -\int_{\m{U}/\m{V}^{\mrm{int}}} \rho_{\m{U},k} \wedge \omega_{\m{U},k}^{n+r} + \frac{1}{n+r+1}\int_{\m{U}/\m{V}^{\mrm{int}}}\widehat{S}_v \omega_{\m{U},k}^{n+r+1}.
\]
\end{lemma}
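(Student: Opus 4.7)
The plan is to first obtain the form $\alpha_k$ on the integrable locus $\cV^{\mrm{int}}$ directly from Theorem \ref{thm:dervanhallam}, then extend it smoothly to all of $\cV$ using the almost-K\"ahler structures on the fibres, and finally verify positivity via a comparison with the natural $L^2$-Weil--Petersson metric from the moduli theory of simple vector bundles.

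First, on $\cV^{\mrm{int}}$ the restriction of $\cU \to \cV$ is a proper holomorphic submersion with relatively K\"ahler metric $\omega_{\cU,k}$, and the $K$-action on each fibre $\bb{P}(\cE_v)$ is Hamiltonian with respect to $\omega_k$. A fibrewise moment map is supplied by the map $m$ of Lemma \ref{Lemma:CinftyE_correspondence}. Applying Theorem \ref{thm:dervanhallam} with these data produces a closed two-form on $\cV^{\mrm{int}}$, and unpacking the Dervan--Hallam expression shows it coincides with the right-hand side of the displayed formula, matching Definition \ref{Def:WeilPetersson}.

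Next, using the smooth product decomposition $\cU \cong \cV \times M$, observe that $(M, \omega_k, J_v)$ is an almost-K\"ahler manifold for every $v \in \cV$, so its Chern-Ricci form $\rho_v$ and scalar curvature are defined, depend smoothly on $v$, and coincide with the holomorphic notions over $\cV^{\mrm{int}}$. The fibre integrals in the formula therefore produce a smooth two-form on the whole of $\cV$, which we continue to denote $\alpha_k$. Closedness on $\cV$ follows either by observing that the variational argument underlying Theorem \ref{thm:dervanhallam} only uses infinitesimal deformations of the fibrewise K\"ahler form and complex structure and hence extends to the almost-K\"ahler setting, or by expressing $\alpha_k$ as $i\partial\bar\partial$ of a fibre-integral potential whose extension to $\cV$ is manifest. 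The $(1,1)$-type with respect to the complex structure inherited from $\cV \subset H^1(B, \End \cE_0)$ then follows from its validity on $\cV^{\mrm{int}}$ together with $G$-equivariance.

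The main difficulty is verifying positivity, and here we invoke the relationship between $\alpha_k$ and the Weil--Petersson metric from the moduli of simple holomorphic vector bundles, which is given by the natural $L^2$-pairing on $H^1(B, \End \cE_0)$ induced by the Hermite--Einstein metric $h$ on $\cE_0$ and is manifestly positive definite. Using the adiabatic volume expansion \eqref{eq:adiabaticvolform}, the curvature expansion \eqref{Eq:expansion_curvature_vb}, the Hermite--Einstein condition on $\cE_0$, and the cscK condition on $B$, one expands $\alpha_k$ at the origin in powers of $k$ and identifies the leading term as a positive multiple of this $L^2$-pairing on harmonic representatives. Positivity at $v=0$ then extends by continuity to a neighbourhood of the origin, and after shrinking $\cV$ if necessary, $\alpha_k$ is a K\"ahler form on the whole Kuranishi space. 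The core obstacle is thus carrying out the leading-order computation carefully enough to identify $\alpha_k$ at $0$ with the vector-bundle Weil--Petersson metric.
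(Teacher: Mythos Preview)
Your overall strategy matches the paper's: obtain $\alpha_k$ on $\cV^{\mrm{int}}$ from the Dervan--Hallam theorem, compare it to the vector-bundle Weil--Petersson metric to verify positivity, and then extend to $\cV$. However, there are two substantive gaps in the positivity step and one in the extension step.

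First, your claim that ``the leading term'' in the $k$-expansion of $\alpha_k$ at the origin is a positive multiple of the $L^2$-pairing is incorrect. The genuine leading coefficient $\alpha_0$ (at order $k^n$) vanishes identically, because the relevant fibre-integral reduces to a Weil--Petersson form for the fibres of $\cU \to B \times \cV^{\mrm{int}}$, and these fibres are projective spaces, hence rigid. The vector-bundle Weil--Petersson metric only appears at the \emph{subleading} order $k^{n-1}$. You need to carry out the expansion of $\rho_{\cU,k}$ and the volume form carefully enough to see this cancellation and to isolate $\alpha_1$; the ingredients you list (the HE condition, cscK on $B$, \eqref{Eq:expansion_curvature_vb}) are relevant, but the structure of the computation is more delicate than your sketch suggests.

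Second, even at order $k^{n-1}$ the term $\alpha_1$ is not simply a positive multiple of $\int_{B} \tr(F\wedge F)\wedge\omega_B^{n-1}$: there is an additional contribution from a representative of $c_2(\cF)$ pulled back from $B$. The paper deals with this by writing it as $\eta_0 + d\zeta$ with $\eta_0$ a representative of $c_2(\cE_0)$ (whose fibre integral over $\cV$ vanishes) and $\zeta$ a $v$-dependent three-form with $\zeta|_{v=0}=0$, so this extra term is small after shrinking $\cV$. Without addressing this term, positivity does not follow.

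Finally, your extension argument is vaguer than the paper's and partly unjustified: deducing $(1,1)$-type on $\cV$ from its validity on $\cV^{\mrm{int}}$ ``together with $G$-equivariance'' is not a valid inference in general. The paper instead invokes Fujiki--Schumacher \cite{FujikiSchumacher1990} to identify $\alpha_k$ on $\cV^{\mrm{int}}$ with the K\"ahler form of an explicit Riemannian metric $\langle a_1,a_2\rangle_x = \int_{\cU_x}\tr(a_1\bar{a_2})\alpha_k^{n+r}$, which extends directly to all of $\cV$ as a Hermitian metric on the tangent space. This gives both the extension and the $(1,1)$-type simultaneously, without appealing to almost-K\"ahler Chern--Ricci forms.
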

\begin{proof}
The relatively symplectic form $\omega_{\m{U},k}$ is constant in the horizontal direction with respect to the projection $\mrm{pr}_1:\m{U} \to \m{V}^{\mrm{int}}$. So the second term in the expression of $\alpha_k$ vanishes.
Thus we concentrate on the first term.
We expand in powers of $k$ the expression of $\rho_{\m{U},k}$ as follows:
\[
\begin{aligned}
\rho_{\m{U},k} &= i\del\delbar\log \left(k^n\widehat{\omega}^r\wedge\omega_B^n + \Ok{n-1}\right)\\
&= i\del\delbar\log \left(\widehat{\omega}^r\wedge\omega_B^n\right) + \Ok{-1}\\
&= i\del\delbar \log \det(\widehat{\omega}) + i \del \delbar \log\det(\omega_B) + \Ok{-1}.
\end{aligned}
\]
The $k^{-1}$-term is exact on $\m{U}$ because $\omega_{\m{U},k}^{n+r}$ and $\widehat{\omega}^r\wedge\omega_B^n$ induce the same class on $-K_{\m{U}/\m{V}^{\mrm{int}}}$.
Therefore we can write
\[
\alpha_k = -\int_{\m{U}/\m{V}^{\mrm{int}}} \rho_{\m{U},0} \wedge \omega_{\m{U},k}^{n+r}.
\]
Using the expansion of Equation \eqref{eq:adiabaticvolform} for the volume form, the leading order term of $\alpha_k$ is
\[
\alpha_0 = -\int_{\m{U}/\m{V}^{\mrm{int}}} i\del\delbar\log\det(\widehat{\omega}) \wedge \widehat{\omega}^r\wedge\omega_B^n,
\]
which vanishes because, as before, the form $\widehat{\omega}$ is constant in the horizontal-with-respect-to-$\pi_{\m{U}}$ direction.
This is consistent with the fact that the leading order term of the Weil--Petersson metric is the Weil--Petersson metric of the fibres of $\m{U} \to B \times \m{V}^{\mrm{int}}$, which are projective spaces, hence rigid.

We then turn our attention to the subleading order term
\[
\alpha_1 = -\int_{\m{U}/\m{V}^{\mrm{int}}}i\del\delbar\log\det(\widehat{\omega}) \wedge \widehat{\omega}^{r+1}\wedge\omega_B^{n-1}-\int_{\m{U}/\m{V}^{\mrm{int}}} \mrm{Scal}(\omega_B) \widehat{\omega}^{r+1}\wedge\omega_B^n.
\]
Again, the second addendum vanishes because the form $\widehat{\omega}$ is constant in the horizontal-with-respect-to-$\pi_{\m{U}}$ direction.
We have proven that
\[
\alpha_k = -k^{n-1}\int_{\m{U}/\m{V}^{\mrm{int}}}i\del\delbar\log\det(\widehat{\omega}) \wedge \widehat{\omega}^{r+1}\wedge\omega_B^{n-1} + \Ok{n-2}
\]
It remains to prove that the $k^{-1}$-term is equal - up to a multiplicative constant - to the Weil--Petersson metric on the Kuranishi space $\m{V}^{\mrm{int}}$ of vector bundles centred about the polystable vector bundle $\m{E}_0\to B$. Let $\m{F} \to \m{V}^{\mrm{int}} \times B$ the universal vector bundle. We consider the Weil--Petersson metric
\[
\alpha_{WP, \m{F}} = \int_{\m{V}^{\mrm{int}}\times B/\m{V}^{\mrm{int}}} \mrm{tr}(F \wedge F) \wedge \omega_B^{n-1},
\]
where $F$ is the curvature of the universal connection on $\m{F} \to \m{V}^{\mrm{int}} \times B$.
Going back to the expression of $\alpha_k$, we note that
\[
i\del\delbar\log\det(\widehat{\omega}) \wedge \widehat{\omega}^{r+1}\wedge\omega_B^{n-1} = \left(i\del\delbar\log\det(\widehat{\omega})\right)_{\m{H}} \wedge \widehat{\omega}_{\m{H}}\wedge\widehat\omega^{r}\wedge\omega_B^{n-1}
\]
where the foot index $_\m{H}$ indicates the horizontal restriction \emph{with respect to $B$}.
It follows from \cite[Lemma 3.2]{dervansektnan21a} that
\[
\widehat{\omega}_{\m{H}} = m^*F + \pi^*\beta,
\]
for some 2-form $\beta$ on $B$.
Since the fibres are projective spaces,
\[
\left(i\del\delbar\log\det(\widehat{\omega})\right)_{\m{H}} = m^* F,
\]
see \cite[Proposition 3.17]{dervansektnan21a}, \cite[Lemma 5.4]{Bronnle_PhDthesis}.
Using the expression of $m$ \eqref{Eq:CinftyE_correspondence} we see that, as a $2$-form on $\m{V}^{\mrm{int}}\times B$,
\[
m^*F = \mrm{Tr}(F)
\]
so
\[
\begin{aligned}
\alpha_1 &=\int_{\m{U}/\m{V}^{\mrm{int}}}i\del\delbar\log\det(\widehat{\omega})_{\m{H}} \wedge \widehat{\omega}_{\m{H}}\wedge \widehat{\omega}^{r}\wedge\omega_B^{n-1} =\\
&=\int_{\m{V}^{\mrm{int}}\times B/\m{V}^{\mrm{int}}} \mrm{Vol}(\bb{P}(\m{E}_b)) \mrm{Tr}(F)\wedge \mrm{Tr}(F)\wedge \omega_B^{n-1} =\\
&=\mrm{Vol}(\bb{P}(\m{E}_b)) \int_{\m{V}\times B/\m{V}^{\mrm{int}}} \mrm{Tr}(F\wedge F)\wedge \omega_B^{n-1} + \mrm{Vol}(\bb{P}(\m{E}_b)) \int_{\m{V}^{\mrm{int}}\times B/\m{V}^{\mrm{int}}} \eta\wedge \omega_B^{n-1},
\end{aligned}
\]
where $\eta$ is a representative of the second Chern class $c_2(\m{F})$.
The first term is the Weil--Petersson metric induced by the universal vector bundle $\m{F}\to B \times \m{V}^{\mrm{int}}$.
As for the second term, since the second Chern class is pulled back from $B$, we can write
\[
\eta = \eta_0 + \dd\zeta
\]
where $\eta_0$ is a representative of $c_2(\m{E}_0)$ and $\zeta$ is a three-form on $B$ that depends on $\m{V}^{\mrm{int}}$.
In particular
\[
\int_{\m{V}^{\mrm{int}}\times B/\m{V}^{\mrm{int}}} \eta_0\wedge \omega_B^{n-1} = 0.
\]
Since the Weil--Petersson metric $\alpha_{WP, \m{F}}$ is positive on $\m{V}^{\mrm{int}}$, and $\zeta\vert_{v=0}=0$, up to shrinking $\m{V}^{\mrm{int}}$ we have that $\alpha_1$ is positive.
So $\alpha_k$ is a positive form on $\m{V}^{\mrm{int}}$.

From \cite[\S8]{FujikiSchumacher1990}, $\alpha_k$ is the K\"ahler form associated to the Riemannian metric
\[
\langle a_1, a_2\rangle_x = \int_{\m{U}_x} \mrm{Tr}(a_1\overline{a_2})\alpha_k^{n+r}, \quad a_1, a_2 \in T_x\m{V}^{\mrm{int}}
\]
which can be extended to the whole $\m{V}$. In conclusion, we see that $\alpha_k$ is a positive form on $\m{V}$.
\end{proof}

To understand what the moment map on $\cV$ with respect to $\alpha_k$ is, we need to understand the moment map of the $K$-action on $\cU$ with respect to $\omega_k$. The starting point for this is that we can view $\mathfrak{k} \subset C^{\infty} (M)$, as explained in \S \ref{Subsec:diffgeomvectorbundles}. 
Thus 
$$
\langle \tau(u), \sigma \rangle = h_{\sigma} (\mrm{pr}_2(u))
$$
is a moment map with respect to $\omega_k$ for the $K$-action on $\cU$ for \emph{all} $k$. Here $\mrm{pr}_2 : \cU \to M$ is the projection to $M$, using that $\cU = \cV \times M$ as a smooth manifold.

We are now ready to understand the moment map condition on $\cV$ induced from $\omega_k$ on $\cU$.
\begin{lemma}
\label{lem:momentmapzero}
A point $v \in \cV$ is a zero of the moment map $\mu_k$ if and only if the projection of the scalar curvature $S(\omega_k)$ of $\omega_k$ is orthogonal to the $v$-dependent embedding $\mathfrak{k} \subset C^{\infty}(M)$.
\end{lemma}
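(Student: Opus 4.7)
The plan is to recognise Lemma \ref{lem:momentmapzero} as an immediate consequence of Dervan--Hallam's Theorem \ref{thm:dervanhallam}, applied to the family $\pi_{\m{U}}: \m{U} \to \m{V}$ with its relatively K\"ahler form $\omega_{\m{U},k}$ and the natural $K$-action from \S\ref{subsubsec:kuranishi_proj}. As suggested in the paragraph preceding the lemma, the candidate moment map on $\m{U}$ is
\[
\langle \tau(u), \sigma \rangle = h_\sigma(\mrm{pr}_2(u)),
\]
where $h_\sigma \in C^{\infty}_W(M)$ is the Hamiltonian produced by the map $m : \mathfrak{k} \to C^{\infty}_W(M)$ of Lemma \ref{Lemma:CinftyE_correspondence}.

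The first step is to verify that $\tau$ is in fact a moment map for $(\m{U}, \omega_{\m{U},k})$. By Lemma \ref{Lemma:CinftyE_correspondence}, $h_\sigma$ is a fibrewise Hamiltonian for $\xi_\sigma$ on $M$ with respect to $\omega$; since $\xi_\sigma$ is vertical, $k\omega_B$ annihilates it, so $h_\sigma$ remains a Hamiltonian with respect to $\omega_k = \omega + k\omega_B$. Pulling back via $\mrm{pr}_2$, and using that the $K$-action on $\m{U} = \m{V} \times M$ is a product action and that $\omega_{\m{U},k} = \mrm{pr}_2^* \omega_k$, one sees that $h_\sigma \circ \mrm{pr}_2$ is a Hamiltonian for $\nu_\sigma$ on $\m{U}$. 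Equivariance of $\tau$ follows from $m$ being a Lie algebra homomorphism.

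Plugging $\tau$ into Theorem \ref{thm:dervanhallam} then yields
\[
\langle \mu_k(v), \sigma \rangle = \int_{M} h_\sigma\, (\Scal(\omega_k, J_v) - \widehat{S})\, \omega_k^{n+r}
\]
as the moment map on $\m{V}$ with respect to $\alpha_k$. Hence $\mu_k(v) = 0$ precisely when this integral vanishes for every $\sigma \in \mathfrak{k}$, i.e.\ when $\Scal(\omega_k, J_v) - \widehat{S}$ is $L^2(\omega_k)$-orthogonal to $\bar{\mathfrak{k}}^M = m(\mathfrak{k}) \subset C^{\infty}(M)$. Since $\Scal(\omega_k, J_v) - \widehat{S}$ has mean zero by definition of $\widehat{S}$ and constants lie in $\bar{\mathfrak{k}}^M$ (as $m(\Id_E) = -1$), this orthogonality is equivalent to the $L^2$-projection of $\Scal(\omega_k, J_v)$ onto the embedded copy of $\mathfrak{k}$ in $C^\infty(M)$ being zero modulo constants, which is the content of the lemma. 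There is essentially no obstacle here: the proof is a direct unwinding of the Dervan--Hallam formula once the moment map $\tau$ on the total space $\m{U}$ has been identified.
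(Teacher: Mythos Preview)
Your proposal is correct and follows essentially the same route as the paper's proof: apply Theorem~\ref{thm:dervanhallam} with the moment map $\tau(u)=h_{\sigma}(\mathrm{pr}_2(u))$ on $\mathcal{U}$, identify the fibre $\mathcal{U}_v$ with $(M,J_v)$, and read off that $\mu_k(v)=0$ exactly when $\Scal(\omega_k,J_v)-\widehat S$ is $L^2$-orthogonal to $\bar\mfk^M=m(\mfk)$. The extra verification you give that $\tau$ really is a moment map is content the paper places in the paragraph just before the lemma, and your closing remarks about constants and mean-zero functions are harmless but unnecessary for matching the (admittedly loosely phrased) statement.
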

\begin{proof}
By Theorem \ref{thm:dervanhallam}, the moment map $\mu_k$ with respect to the Weil--Petersson form $\alpha_k$ on $\cV^{\mrm{int}}$ is given by
$$
\langle \mu_k(v), \sigma \rangle  = \int_{u \in \cU_v} \langle \tau(u), \sigma \rangle (\mrm{Scal}(\omega_k)\big|_{\cU_v} - \widehat S) \omega_{k,v}^{n+r},
$$
and it can be extended to a unique moment map $\mu_k$ on $\m{V}$.
By the above,
$$
\langle \tau(u), \sigma \rangle = h_{\sigma} (\pi_M (u)).
$$
Since $\cU_v \cong M$ via the projection $\pi_M$ and the holomorphic structure is that of $\P(\cE_v)$, we therefore get that
$$
\langle \mu_k(v), \sigma \rangle = \int_{\P(\cE_v)} h_{\sigma}(m) (\mrm{Scal}(\omega_k) - \hat S) \omega_k^{n+r}.
$$
Thus $v$ is a zero of the moment map if and only if $\mrm{Scal}(\omega_k, J_v) - \widehat S$ is orthogonal to the space of functions spanned by the $h_{\sigma}$. This is precisely the realisation of $\mathfrak{k}$ as a subspace of $C^{\infty}(M)$ which depends on $v$.
\end{proof}

\subsection{Approximate solutions}
Our goal now is to change the relatively K\"ahler structure on $\cU$ so that a zero of the corresponding moment map on $\cU$ genuinely corresponds to a cscK metric. In this section, as a step towards this, we compute the expansion of the scalar curvature of $(\omega + k\omega_B, J_v)$ in inverse powers of $k$ and then alter the K\"ahler structure to construct approximate solutions to solving that the scalar curvature lies in $\mfk$.

We begin by making the following assumptions on the geometry of the base:
\begin{enumerate}
\item the scalar curvature of $\omega_B$ is constant:
\[
\mrm{Scal}(\omega_B) = \widehat{S}_B;
\]
\item the automorphism group $\mrm{Aut}(B, L)$ is discrete.
\end{enumerate}

Using the isomorphism $m$ described in Lemma \ref{Lemma:CinftyE_correspondence}, the expansion \eqref{Eq:expansion_curvature_vb} of the curvature of the vector bundle, and the computations in \cite[Lemma 18]{bronnle15}, we obtain the following expansion of the scalar curvature of $(\omega_k := \omega + k\omega_B, J_v)$ to sub-leading order in $k$:
\begin{equation}\label{Eq:expansion_scalar_curvature}
\mrm{Scal}(\omega_k, J_v) = \widehat{S}_b + k^{-1} \left(\widehat{S}_B + m^*(\Lambda_{\omega_B}F_v) \right) + \Ok{-2}.
\end{equation}
In particular, the leading order term is constant since the fibres of $\m{E}_0 \to B$ all have constant scalar curvature given by the fibrewise Fubini-Study metric.
The following lemma describes the $k$-expansion of the linearisation of the scalar curvature.

\begin{lemma}\label{Lemma:approximate_linearisation}
Let $\m{L}_{k,v}$ be the linearisation of the scalar curvature at a K\"ahler potential $\varphi$. Then
\[
\m{L}_{k,v}(\varphi) = \m{L}_0(\varphi) + k^{-1} \m{L}_1(\varphi) + k^{-2}\m{L}_2 + \Ok{-3},
\]
where:
\begin{enumerate}
\item $\m{L}_0(\varphi) =- \m{D}_{\m{V}}^*\m{D}_{\m{V}}(\varphi)$;
\item if $\varphi_B \in C^\infty(B)$, then $\m{L}_0(\varphi_B) = \m{L}_1(\varphi_B)=0$ and
\[
\int_{X/B}\m{L}_2(\varphi_B) \omega^{r}= -\m{D}_B^*\m{D}_B(\varphi_B);
\]
\end{enumerate}
\end{lemma}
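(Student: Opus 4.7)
The plan is to treat the two contributions to the linearisation separately. Recall from \eqref{eq:linearisation_scalar_curvature} that
\[
\m{L}_{k,v}(\varphi) = -\m{D}_{k,v}^*\m{D}_{k,v}(\varphi) + \tfrac{1}{2}\langle \nabla \mrm{Scal}(\omega_k,J_v),\nabla\varphi\rangle_{g_{k,v}},
\]
so I would first analyse each term. From \eqref{Eq:expansion_scalar_curvature} we already know $\mrm{Scal}(\omega_k,J_v) = \widehat S_b + O(k^{-1})$, so $\nabla\mrm{Scal}(\omega_k,J_v) = O(k^{-1})$; combined with the fact that the inverse metric $g_{k,v}^{-1}$ scales the horizontal directions by $k^{-1}$, this shows the second term starts at order $k^{-1}$, and in fact only contributes to $\m{L}_2(\varphi_B)$ (and higher) when $\varphi$ is a base pullback, since for such $\varphi_B$ the gradient is purely horizontal and hence $O(k^{-1})$. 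Consequently the leading piece of $\m{L}_{k,v}$ is purely Lichnerowicz.

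For the Lichnerowicz expansion, I would use the splitting $TX = V \oplus H_v$ induced by $\omega$ and $J_v$, so that $g_{k,v} = g_V \oplus k\, g_B$ schematically. The $(1,0)$-gradient then splits as
\[
\nabla_{g_{k,v}}^{1,0}\varphi = \nabla_V^{1,0}\varphi + k^{-1}\nabla_{H_v}^{1,0}\varphi,
\]
and applying $\bar\partial_{J_v}$ and tracking powers of $k$ gives an expansion
\[
\m{D}_{k,v} = \m{D}_V + k^{-1}\m{D}_{\mathrm{mix}} + k^{-2}\m{D}_H + O(k^{-3}).
\]
Taking the $L^2(g_{k,v})$-adjoint, and using that the volume form expands as in \eqref{eq:adiabaticvolform}, gives a corresponding expansion of $\m{D}_{k,v}^*\m{D}_{k,v}$ whose leading term is the fibrewise operator $\m{D}_V^*\m{D}_V$. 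This proves item (1). For a base pullback $\varphi_B$, all vertical derivatives vanish, so $\m{D}_V\varphi_B = 0$ and hence $\m{L}_0(\varphi_B) = 0$. The $k^{-1}$-term $\m{D}_{\mathrm{mix}}$, when acting on $\varphi_B$, involves either $\nabla_V$ applied to a fibrewise constant quantity or the contraction of a horizontal gradient with a vertical $\bar\partial$; both vanish on fibrewise constant input, giving $\m{L}_1(\varphi_B) = 0$.

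The remaining, and hardest, step is the explicit identification of the $k^{-2}$ term. The contribution from the horizontal-horizontal piece $-\m{D}_H^*\m{D}_H \varphi_B$ matches the base Lichnerowicz after averaging along the fibre, by the standard adiabatic calculations of Hong \cite{hong98}, Fine \cite{fine04} and Br\"onnle \cite{bronnle15}, while the cross-term $\tfrac{1}{2}\langle \nabla(\widehat S_B + m^*\Lambda_{\omega_B}F_v),\nabla\varphi_B\rangle$ integrates to zero over each fibre: the $\widehat S_B$ piece is constant, and the fibrewise integral of $m^*(\Lambda_{\omega_B}F_v)$ vanishes by the Lie algebra description of $C^{\infty}_W$ in Lemma \ref{Lemma:CinftyE_correspondence} (these functions are fibrewise mean-value zero). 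Putting these together yields $\int_{X/B} \m{L}_2(\varphi_B)\omega^r = -\m{D}_B^*\m{D}_B(\varphi_B)$. I expect the main obstacle to be the careful bookkeeping in this last step: one must verify that the various terms involving the second fundamental form of the fibration, the curvature of the Hermite-Einstein metric $h$, and the deformation class $\gamma_v$ either produce the base Lichnerowicz operator upon integration or are fibrewise exact modulo terms that pair trivially with $\varphi_B$.
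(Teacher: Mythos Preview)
Your argument for item (1) is essentially the same as the paper's: since the leading term of $\mrm{Scal}(\omega_k,J_v)$ is the constant fibre scalar curvature, the leading term of its linearisation is the fibrewise Lichnerowicz operator.

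For item (2), however, the paper takes a shorter route that bypasses precisely the bookkeeping you flag as the ``main obstacle''. The key observation is the identity
\[
\omega_k + i\del\delbar\varphi_B \;=\; \omega + k\bigl(\omega_B + k^{-1}\, i\del\delbar\varphi_B\bigr),
\]
so perturbing $\omega_k$ by a base potential is exactly the same as leaving the relative metric $\omega$ untouched and perturbing the base metric $\omega_B$ by $k^{-1}\varphi_B$. Now feed this into the scalar curvature expansion \eqref{Eq:expansion_scalar_curvature}: the $k^0$ term $\widehat S_b$ is independent of $\omega_B$, so $\m{L}_0(\varphi_B)=0$; the base metric first enters at order $k^{-1}$, and a perturbation of size $k^{-1}$ there produces an effect of order $k^{-2}$, giving $\m{L}_1(\varphi_B)=0$ at once. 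At order $k^{-2}$ one simply linearises the $k^{-1}$ coefficient $\widehat S_B + m^*(\Lambda_{\omega_B}F_v)$ in $\omega_B$; the $C^\infty_W$ part has vanishing fibre integral, and the linearisation of $\mrm{Scal}(\omega_B)$ at the cscK metric $\omega_B$ is $-\m{D}_B^*\m{D}_B$, yielding the stated formula.

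Your direct expansion of $\m{D}_{k,v}^*\m{D}_{k,v}$ into vertical, mixed and horizontal pieces is in principle valid (and is closer to what Fine actually carries out in \cite[\S3]{fine04}), but the step where you assert that $\m{D}_{\mathrm{mix}}\varphi_B$ vanishes is not quite justified as written: the vertical component of $\bar\partial$ applied to the horizontal lift of $\nabla_B^{1,0}\varphi_B$ involves the second fundamental form of the fibration and need not vanish pointwise. One can push this through with more care, but the paper's rescaling trick makes the vanishing of $\m{L}_0(\varphi_B)$ and $\m{L}_1(\varphi_B)$ immediate and makes the identification of $\m{L}_2$ a one-line consequence of the base being cscK.
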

\begin{proof}
Both results follow as in Fine \cite[\S3]{fine04}. Since the leading order term of the expansion \eqref{Eq:expansion_scalar_curvature} is the scalar curvature of the fibres, which is constant, the leading order term of its linearisation \eqref{eq:linearisation_scalar_curvature} is the Lichnerowicz operator of the fibres.
The same observation holds for the second claim. In fact, adding a potential $\phi_B$ to $\omega_{k,1}$ amounts to adding the potential $k^{-1}\phi_B$ to the base metric $\omega_B$.
Since the scalar curvature of the base affects the order $k^{-1}$-term and not the order zero term, the combined effect on the linearisation is of order $k^{-2}$.
\end{proof}

We are now ready to prove that the K\"ahler metric can be approximately modified by adding K\"ahler potentials so that the scalar curvature lies in the Lie algebra $\mf{k}$ up to any given order in $k$.
\begin{proposition}
\label{prop:approxsolns}
For every $v \in \m{V}$ and for every $j$, there exist sections
\[
\sigma_1, \dots, \sigma_j \in \mf{k} \subset \Gamma(\End(\m{E})), \quad s_1, \dots, s_j \in \mf{k}^\perp \subset \Gamma(\End(\m{E})),
\]
functions
\[
\quad l_1, \dots, l_j \in C^\infty_R(X), \quad b_1, \dots, b_j \in C^\infty(B)
\]
and constants $c_1, \dots, c_j$ such that if
\[
\phi_{k,j} = \sum_{i=1}^j k^{-i}l_i + k^{-i+2}b_i,
\]
and 
\[
\dbar_{k,v,j} = \exp({k^{-j+1}s_r})\cdot \ldots \cdot \exp({k^{-1}s_2}) \cdot \exp{s_1}\cdot \dbar_v,
\]
then we have
\[
\mrm{Scal}(\omega_k + i\del\delbar (\phi_{k,j}), J_{k,v,j}) = \widehat{S}_b + \sum_{i=1}^j k^{-i} \left(h^{\phi_{k,j}}_i + c_i\right)  + \Ok{-j-1},
\]
where $J_{k,v,j}$ is the almost complex structure on $M$ induced by $\dbar_{k,v,j}$, for each $i$ the function $h_i$ is the Hamiltonian function corresponding to $\sigma_i$ and $h^{\phi_{k,j}}_i$ is given by Lemma \ref{lem:changeinholpot}.
\end{proposition}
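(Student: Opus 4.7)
The proof proceeds by induction on $j$. The base case $j = 0$ is immediate from the expansion \eqref{Eq:expansion_scalar_curvature}: $\mrm{Scal}(\omega_k, J_v) = \widehat{S}_b + O(k^{-1})$. For the inductive step, suppose data $\sigma_i, s_i, l_i, b_i, c_i$ have been chosen for $i = 1, \dots, j-1$ so that the conclusion holds to order $k^{-(j-1)}$. Denote by $F_j \in C^\infty(X)$ the coefficient of $k^{-j}$ in the scalar curvature expansion of the current approximate solution. We need to choose $\sigma_j, s_j, l_j, b_j, c_j$ such that adding them makes the $k^{-j}$ coefficient equal to $h_{\sigma_j} + c_j$ modulo $O(k^{-1})$ (which gets absorbed into the error term, recalling that the discrepancy $h_{\sigma_j}^{\phi_{k,j}} - h_{\sigma_j} = \nu_{\sigma_j}(\phi_{k,j})$ from Lemma \ref{lem:changeinholpot} is itself of order $k^{-1}$).

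The contributions of the new data to the $k^{-j}$ coefficient can be computed from Lemmas \ref{Lemma:linearisationHE} and \ref{Lemma:approximate_linearisation} together with the expansion \eqref{Eq:expansion_scalar_curvature}. Changing $\dbar$ by $\exp(k^{-j+1}s_j)$ modifies $\Lambda_{\omega_B} F_v$ by $k^{-j+1}\Delta_v(s_j)$ to leading order, which via the $k^{-1}$-term in \eqref{Eq:expansion_scalar_curvature} and the map $m$ contributes $k^{-j} m^*(\Delta_v(s_j))$. Adding $k^{-j}l_j$ to the potential contributes $k^{-j}\mathcal{L}_0(l_j) = -k^{-j}\mathcal{D}_V^*\mathcal{D}_V(l_j)$. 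Adding $k^{-j+2} b_j$ with $b_j \in C^\infty(B)$ contributes $k^{-j}\mathcal{L}_2(b_j)$, since $b_j$ first enters at the second-order term in the linearisation. Hence we must solve
\begin{equation*}
F_j + m^*(\Delta_v(s_j)) - \mathcal{D}_V^*\mathcal{D}_V(l_j) + \mathcal{L}_2(b_j) = h_{\sigma_j} + c_j \quad (*)
\end{equation*}
in $C^\infty(X)$ for $(\sigma_j, s_j, l_j, b_j, c_j)$.

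The plan is to exploit the splitting $C^\infty(X) = C^\infty(B) \oplus C^\infty_W(X) \oplus C^\infty_R(X)$ by solving $(*)$ in three stages. First, fibre-integrate $(*)$; the operators $m^*(\Delta_v(s_j))$ and $\mathcal{D}_V^*\mathcal{D}_V(l_j)$ have vanishing fibre averages (being fibrewise holomorphy potentials of average zero, respectively fibrewise mean-zero by construction), while by Lemma \ref{Lemma:approximate_linearisation} the fibre integral of $\mathcal{L}_2(b_j)$ equals $-\mathcal{D}_B^*\mathcal{D}_B(b_j)$ up to a volume factor. Since $\mrm{Aut}(B,L)$ is discrete, the kernel of $\mathcal{D}_B^*\mathcal{D}_B$ on $B$ consists only of constants, so it is surjective onto mean-zero functions. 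Choosing $c_j$ so as to balance the constant mode, we can then solve for $b_j \in C^\infty(B)$. Second, with $b_j$ fixed, project $(*)$ to $C^\infty_W(X)$. Using the isomorphism $m : \Gamma(\End_0 E) \to C^\infty_W$ of Lemma \ref{Lemma:CinftyE_correspondence}, this reduces to an equation of the form $\Delta_v(s_j) - \sigma_j = \tau$ in $\Gamma(\End_0 E)$, for a given $\tau$ built from the $C^\infty_W$-parts of $F_j$ and $\mathcal{L}_2(b_j)$; Proposition \ref{prop:bundleLaplacian} furnishes the solution with $s_j \in \mathfrak{k}^\perp$ and $\sigma_j \in \mathfrak{k}$. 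Third, with $b_j, s_j, \sigma_j$ fixed, the $C^\infty_R$-projection of $(*)$ reduces to $\mathcal{D}_V^*\mathcal{D}_V(l_j) = P_R(F_j) + P_R(\mathcal{L}_2(b_j))$, which is solvable since $\mathcal{D}_V^*\mathcal{D}_V$ is invertible on $C^\infty_R$ (the latter being the orthogonal complement of its kernel $C^\infty_W \oplus C^\infty(B)$).

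The main obstacle, and the reason the argument uses all three types of modifications simultaneously, is the interaction between the base Lichnerowicz, the fibrewise Lichnerowicz, and the Hermite--Einstein linearisation. In particular, $\mathcal{L}_2(b_j)$ need not lie in $C^\infty(B)$, so it may feed non-trivially into the $C^\infty_W$ and $C^\infty_R$ projections, which is why $b_j$ must be solved for first and then accounted for in the remaining steps. Once the three stages are executed in the stated order, the operator solving $(*)$ is manifestly surjective onto $C^\infty(X)$ modulo $\overline{\mathfrak{k}} + \mathbb{R}$, closing the induction.
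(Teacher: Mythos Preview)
Your overall scheme is the right one and matches the paper for $j\geq 2$: decompose the order-$k^{-j}$ error along $C^\infty(X)=C^\infty(B)\oplus C^\infty_W\oplus C^\infty_R$, kill the base part with $\mathcal{D}_B^*\mathcal{D}_B$ (using discreteness of $\Aut(B,L)$), the $C^\infty_W$-part via the bundle Laplacian and Proposition~\ref{prop:bundleLaplacian}, and the $C^\infty_R$-part via the vertical Lichnerowicz operator. The order you choose (base first, so that the non-base components of $\mathcal{L}_2(b_j)$ can be fed into the remaining two equations) is also the one the paper uses.

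There is, however, a genuine gap at the step $j=1$. You argue that changing $\dbar_v$ to $\exp(k^{-j+1}s_j)\cdot\dbar_v$ modifies $\Lambda_{\omega_B}F_v$ by $k^{-j+1}\Delta_v(s_j)$ ``to leading order'', and then solve the \emph{linear} equation $\Delta_v(s_j)-\sigma_j=\tau$. For $j\geq 2$ this is legitimate: the nonlinear remainder in the curvature is $O(k^{-2(j-1)})$, hence contributes $O(k^{-2j+1})$ to the scalar curvature, which is absorbed into the $O(k^{-j-1})$ error. But for $j=1$ the scaling factor is $k^{0}=1$: the gauge change is $\exp(s_1)\cdot\dbar_v$ with no $k$-damping, and the expansion
\[
\Lambda_{\omega_B}F_{\exp(s_1)\cdot\dbar_v}=\Lambda_{\omega_B}F_v+\Delta_v(s_1)+O(|s_1|^2)
\]
produces a nonlinear remainder that sits at order $k^{-1}$ in the scalar curvature and does not decay as $k\to\infty$. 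Solving only your linear equation $(*)$ therefore leaves an uncontrolled $O(k^{-1})$ error, and the induction does not close.

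The paper repairs this by treating $j=1$ differently: it solves the \emph{nonlinear} equation
\[
\Lambda_{\omega_B}F_{\exp(s)\cdot\dbar_v}=\sigma\in\mfk
\]
exactly, via the implicit function theorem. Proposition~\ref{prop:bundleLaplacian} supplies the invertibility of the linearisation $(s,\sigma)\mapsto\Delta_v(s)-\sigma$, and smallness of $v\in\cV$ gives the required smallness of the initial error $\Lambda_{\omega_B}F_v-c\,\Id$. After this, the $k^{-1}$ coefficient of the scalar curvature is \emph{exactly} $\widehat{S}_B+m^*(\sigma_1)$, with no residual, so $l_1=b_1=0$ suffice. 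Your linear use of Proposition~\ref{prop:bundleLaplacian} is then correct from $j=2$ onward, where the $k$-scaling does the work.
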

\begin{proof}
The proof is by induction on $j$. We begin with the base step $j=1$, i.e.\ with $k^{-1}$-term, and then we describe the induction step $j=2$.
Let $s \in \Gamma(\End E)$. Changing $\dbar_v$ to $\exp(s)\cdot \dbar_v$ changes the bundle curvature to
\[
\Lambda_{\omega_B}F_{\mrm{exp}(s)\cdot \dbar_{v}}.
\]
Note first that we can always solve the equation
\begin{equation}\label{Eq:bundlecurvature_in_k}
\Lambda_{\omega_B}F_{\mrm{exp}(s)\cdot \dbar_{v}} = \sigma \in \mf{k}
\end{equation}
for $s$ and $\sigma$, for all sufficiently small $v$. The linearisation of the condition \eqref{Eq:bundlecurvature_in_k} is given by the map
\[
(s,\sigma) \mapsto \Delta_v(s) -\sigma.
\]
Using Proposition \ref{prop:bundleLaplacian}, the implicit function theorem implies that there exists in fact a solution $(s_1, \sigma_1)$ to equation \eqref{Eq:bundlecurvature_in_k}.
In particular, $\sigma_1$ corresponds via the map $m$ \eqref{Eq:CinftyE_correspondence} to a fibrewise holomorphy potential $\widetilde{h}_1 \in \mf{k}$ for the metric $(\omega_k, J_v)$.
Therefore, letting $J_{k,v,1}$ be the complex structure induced by $\dbar_{k,v,1} = \exp(s_1)\cdot \dbar_v$ (which is actually independent of $k$), we have that
\begin{align*}
\mrm{Scal}(\omega_k, J_{\exp(s_1)\cdot v}) =&  \widehat{S}_b + k^{-1} \left(\widehat{S}_B + m^*(\Lambda_{\omega_B}F_{\exp(s_1)\cdot \dbar_v}) \right) + \Ok{-2} \\
=& \widehat{S}_b + k^{-1} \left(\widehat{S}_B + \widetilde{h}_1 \right) + \Ok{-2}.
\end{align*}
By Lemma \ref{lem:changeinholpot}, the function
\begin{align*}
h^{\phi_{k,1}}_1 =& \widetilde{h}_1 + \langle \nabla \widetilde{h}_1 , \nabla {\phi_{k,1}} \rangle \\
=& \widetilde{h}_1 + \langle \nabla \widetilde{h}_1, \nabla k^{-1}l_1\rangle
\end{align*}
is a Hamiltonian with respect to $\omega_k+ i\del\delbar {\phi_{k,1}}$, where $\phi_{k,1} = k^{-1}l_1$. Let
\[
\omega_{k,1} =  \omega_k + i\del\delbar \phi_{k,1}.
\]
Since $h^{\phi_{k,1}}_1 - \widetilde{h}_1$ is $O(k^{-1})$, we can write
\[
\mrm{Scal}(\omega_{k,1}, J_{k,v,1}) = \widehat{S}_b + k^{-1} \left(\widehat{S}_B + h^{\phi_{k,1}}_1 \right) + \Ok{-2},
\]
where $h^{\phi_{k,1}}_1$ is a Hamiltonian for a vector field in $\mfk$ with respect to $\omega_{k,1}$.

We next describe the induction step in the case $j=2$. 
We write explicitly
\[
\mrm{Scal}(\omega_{k,1}, J_{k,v,1}) = \widehat{S}_b + k^{-1} \left(\widehat{S}_B + h^{\phi_{k,1}}_1 \right) + k^{-2}\left(\psi_{2, B} + \psi_{2, E} + \psi_{2, R}\right)+ \Ok{-3}.
\]
We first perturb the K\"ahler metric so that the base term can be made constant. Let $b \in C^\infty(B)$.
From Lemma \ref{Lemma:approximate_linearisation}, the change of $\omega_{k,1}$ by the K\"ahler potential $b$ affects the scalar curvature as
\[
\mrm{Scal}(\omega_{k,1}+ i \del\delbar b, J_{k,v,1}) = \widehat{S}_b + k^{-1} \left(\widehat{S}_B + h^{\phi_{k,1}}_1 \right) + k^{-2}\left(\psi_{2, B} -\m{D}_B^*\m{D}_B(b) + \psi_{2, E}' + \psi_{2, R}'\right)+ \Ok{-3}.
\]
Since the automorphism group of $(B,L)$ is discrete, the equation
\[
\psi_{2,B} - \m{D}_B^*\m{D}_B(b) = constant
\]
admits a solution, which we denote by $b_2$. This makes the $C^\infty(B)$-term constant to order $k^{-2}$.

The corrections to the $C^\infty_W$-term and to the $C^\infty_R$-term now work as in the case $r=1$: we first produce a section $s_2 \in \mf{k}$ such that if we let $\dbar_{k,v,2} = \exp(s_2)\cdot\dbar_{k,v,1}$ and let $J_{k,v,2}$ be the corresponding almost complex structure on $M$, then
\[
\begin{split}
\mrm{Scal}(\omega_{k,1} + i\del\delbar b_2, J_{k,v,2}) = \widehat{S}_b + k^{-1} \left(\widehat{S}_B + h^{\phi_{k,1}}_1 \right) 
+ k^{-2}\left(c_{2} + \widetilde{h}_2 + \psi_{2,R}''\right)+ \Ok{-3},
\end{split}
\]
where $\widetilde{h}_2$ is a holomorphy potential for $(\omega_k, J_v)$.
Next, we eliminate the $C^\infty_R$-term, 
and we obtain an $l_2$ such that
\[
\begin{aligned}
\mrm{Scal}\left(\omega_{k,1} + i\del\delbar \left( b_2 + k^{-2}l_2 \right), J_{\exp(k^{-1}s_2)\cdot v_{k,1}}\right) = \widehat{S}_b &+ k^{-1} \left(\widehat{S}_B + h^{\phi_{k,1}}_1 \right) +\\
&+ k^{-2}\left(c_{2} + \widetilde{h}_2\right)+ \Ok{-3},
\end{aligned}
\]
where the change of the K\"ahler form by the potential $k^{-2}l_2$ does not affect the $C^\infty(B)$ and the $C^\infty_W$-terms at order $k^{-2}$.

Finally, we replace $h^{\phi_{k,1}}_1$ and $\widetilde{h}_2$ with the holomorphy potentials with respect to $\omega_{k,1} + i\del\delbar \left( b_2 + k^{-2}l_2\right)$, i.e. with $h_2^{\phi_{k,2}}$ and $h_2^{\phi_{k,2}}$, where $\phi_{k,2} = \phi_{k,1} + b_2 + k^{-2}l_2$. To do this, we first observe that, since $\sigma_j$ are vertical vector fields, $\sigma_j(b_2)= 0$, so the K\"ahler potential $b_2$ does not change the expression of the holomorphy potential. Therefore, using that
\begin{align*}
k^{-1} h_1^{\phi_{k,1}} - k^{-1} h_1^{\phi_{k,2}} = O(k^{-3})
\end{align*}
and
\begin{align*}
k^{-2} h_2^{\phi_{k,2}} - k^{-2}\widetilde{h}_2 = O(k^{-3}),
\end{align*}
we can replace $h_1^{\phi_{k,1}}$ by $h_1^{\phi_{k,2}}$ and $\widetilde{h}_2$ by $h_2^{\phi_{k,2}}$ in the above expansions. Setting
\[
\omega_{k,2} = \omega_{k,1} + i\del\delbar (b_2 + k^{-2}l_2)
\]
we then have that 
\[
\begin{split}
\mrm{Scal}(\omega_{k,2}, J_{ v_{k,2}}) = \widehat{S}_b + k^{-1} \left(\widehat{S}_B + h^{\phi_{k,2}}_1 \right) 
+ k^{-2}\left(c_{2} +h^{\phi_{k,2}}_2\right)+ \Ok{-3},
\end{split}
\]
which gives the result for $j=2$.

Proceeding in the same way by induction on $j$ concludes the proof.
\end{proof}
\begin{remark}
The above approximate solutions depend smoothly on $v \in \cV$, as the operators to which we have applied the implicit function theorem depend smoothly on $v$.
\end{remark}

The above results are pointwise, but in fact can be shown to hold in any desired Sobolev space.
\begin{proposition}
\label{prop:approxsolnsSobolev}
Let $(\omega_k + i\del\delbar (\phi_{k,v,j}), J_{k,v,j})$ be the K\"ahler structure constructed  in Proposition \ref{prop:approxsolns}, and $h_i$ and $c_i$ the corresponding hamiltonians and constants. For each $j$ and for each $q$, there exists a $C>0$ such that for all sufficiently small $v$ and sufficiently large $k$,
\[
\| \mrm{Scal}(\omega_k + i\del\delbar (\phi_{k,v,j}), J_{k,v,j}) - \widehat{S}_b - \sum_{i=1}^j k^{-i} \left(h^{\phi_{k,j}}_i + c_i\right)  \|_{L^2_{q}(g_{k,v})} \leq  Ck^{\frac{n}{2}-j-1}.
\]
\end{proposition}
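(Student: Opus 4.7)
The plan is to upgrade the implicitly pointwise estimate of Proposition \ref{prop:approxsolns} to the Sobolev estimate claimed here. The factor $k^{n/2}$ in the target bound accounts precisely for the scaling of the volume form: since $\mrm{vol}(\omega_k) = O(k^n)\mrm{vol}(\omega)$, a pointwise $O(k^{-j-1})$ error has $L^2$-norm of order $k^{n/2}\cdot k^{-j-1}=k^{n/2-j-1}$ with respect to the adiabatic volume. The argument thus splits naturally into two steps: (i) showing that all covariant derivatives of the remainder are uniformly $O(k^{-j-1})$ in a fixed reference metric, uniformly in $v$ in a bounded region; and (ii) a scaling comparison between $L^2_q(g_{k,v})$-norms and $L^2_q(h_1)$-norms.

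For (i), let $R_{k,v,j}:=\mrm{Scal}(\omega_k + i\del\delbar \phi_{k,v,j}, J_{k,v,j}) - \widehat S_b - \sum_{i=1}^j k^{-i}(h_i^{\phi_{k,j}} + c_i)$. Inspecting the inductive construction of $\phi_{k,v,j}$ and $\dbar_{k,v,j}$, one sees that $R_{k,v,j}$ is a universal polynomial expression in $k^{-1}$, in $\gamma_v$ and its derivatives, in the corrections $l_i,b_i,s_i,\sigma_i$ and their derivatives, and in the coefficients of the fixed tensors $\omega$ and $\omega_B$. The corrections themselves are obtained by inverting either $\Delta_v$ on $\End E$ (Proposition \ref{prop:bundleLaplacian}) or the base Lichnerowicz operator $\m{D}_B^*\m{D}_B$, whose right inverse exists and is bounded in every Sobolev space on $B$ because $\mrm{Aut}(B,L)$ is discrete. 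Elliptic regularity and the uniformity in $v$ of these inverses produce smooth corrections whose reference Sobolev norms are uniformly bounded in $v$, so that $R_{k,v,j}$ is smooth with $\| R_{k,v,j}\|_{L^2_q(h_1)}\leq Ck^{-j-1}$ for each $q$, uniformly for small $v$ and large $k$; the factor $k^{-j-1}$ is the Taylor-remainder in $k^{-1}$ of the smooth $k^{-1}$-family produced by the construction, and applies to all derivatives as well as to the function itself.

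For (ii), I would prove the scaling inequality
\[
\| f \|_{L^2_q(g_{k,v})} \leq C k^{n/2} \| f \|_{L^2_q(h_1)}
\]
for all smooth $f\in C^\infty(X)$, with $C$ depending on $q$ but not on $v$ or $k$. This uses Lemma \ref{lem:uniformmetrics} to replace $g_{k,v}$ by the product metric $h_k$, Lemma \ref{lem:scalingcomparison} to control the scaling of co-vectors (horizontal co-vectors shrink by $k^{-1/2}$ in $h_k$ relative to $h_1$, while vertical ones are unchanged, so pointwise norms of covariant derivatives in $g_{k,v}$ are bounded by those in $h_1$), and the identity $\mrm{vol}(h_k)=k^n\mrm{vol}(h_1)$. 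Combining (i) and (ii) yields the desired estimate.

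The principal difficulty is not analytic but organisational: carefully tracking the scaling of co-tensors and Christoffel symbols of $g_{k,v}$ relative to $h_1$, and verifying that the constants arising in the iterative implicit-function-theorem construction remain bounded uniformly in $v\in\cV$. Both ingredients are essentially present already in Section \ref{sec:linear}, so no genuinely new analytic estimate is required beyond the two-step package just described.
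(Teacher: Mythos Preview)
Your proposal is correct and follows essentially the same two-step strategy as the paper: first control the remainder and its derivatives at order $k^{-j-1}$ in a $k$-independent norm, then pick up the factor $k^{n/2}$ from the volume scaling $\vol(g_{k,v})\sim k^n$. The only cosmetic difference is that the paper routes step (i) through the $C^q(g_{k,v,j})$-norm directly (invoking Fine's observation that a fixed smooth function has $C^q(g_{k,v,j})$-norm which is $O(1)$), and then uses the trivial inequality $\|f\|_{L^2_q(g_{k,v,j})}\leq C\,\Vol(X,g_{k,v,j})^{1/2}\|f\|_{C^q(g_{k,v,j})}$, whereas you work with $L^2_q(h_1)$ as the reference and prove a separate scaling inequality; both packagings yield the same bound for the same reason.
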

\begin{proof}
This follows as in \cite[Lemma 5.7]{fine04}. The norm of a fixed function is $O(1)$ in the $C^k(g_{k,v,j})$-norms \cite[Lemma 5.6]{fine04}, where $g_{k,v,j}$ is the Riemannian metric coming from the K\"ahler structure $(\omega_k + i\del\delbar (\phi_{k,v,j}), J_{k,v,j})$. From this, it follows that
$$
\| \mrm{Scal}(\omega_k + i\del\delbar (\phi_{k,v,j}), J_{k,v,j}) - \widehat{S}_b - \sum_{i=1}^j k^{-i} \left(h^{\phi_{k,j}}_i + c_i\right) \|_{C^q(g_{k,v,j})} \leq C k^{-j-1}.
$$ 
Incorporating the volume form, we have a general bound
$$
\| f \|_{L^2_{q}(g_{k,v,j})} \leq C \cdot \Vol(X, g_{k,v,j})^{\frac{1}{2}} \cdot \| f \|_{C^{q}(g_{k,v,j})},
$$
the square root coming from the fact that we are working with $L^2$-norms. Since the volume is bounded above by a constant multiple of $k^n$, the result follows.
\end{proof}

\subsection{Perturbing the K\"ahler metric}

Ultimately, we want to solve the cscK equation on $\P(\cE)$, not just that the scalar curvature is orthogonal to $\mathfrak{k}$. We now perturb the K\"ahler metrics $\omega_k$ to metrics whose scalar curvature is in $\mathfrak{k} \subset C^{\infty}(M)$. This is done so that having a zero of the corresponding moment map at $v \in \cV$ is equivalent to having a genuine cscK metric on $\mathbb{P}(\cE_v)$.

Recall from \S\ref{sec:mfk} that $\overline \mfk^M_{\psi} = \{ \widetilde h^{\psi}_{\sigma}  : \sigma \in \mfk \}$ and let $\Pi_{k,\psi}^{\perp}$ denote the projection to the $L^2$-orthogonal complement of $\overline \mfk^M_{\psi}$ in $C^{\infty}(M)$ with respect to $\omega_k + \ddb \psi$. Our goal is to show the following.
\begin{proposition}
\label{prop:findimreduction}
Let $J_{k,v} = J_{k,v,j}$ for a suitably chosen $j$. For all sufficiently large $k$ and sufficiently small $v$, there exists a $\phi_{k,v}$ such that
$$
\Pi^{\perp}_{k,\phi_{k,v}} (\Scal(\omega_k + \ddb \phi_{k,v}), J_{k,v}) - \hat S_k) = 0.
$$
\end{proposition}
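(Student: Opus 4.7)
The plan is to apply a quantitative inverse function theorem, taking the approximate solutions constructed in Proposition \ref{prop:approxsolnsSobolev} as the starting point and solving a slightly stronger equation whose vanishing implies the desired orthogonality. Namely, let $\phi_{k,v,j}$, $\sigma_{k,v,j} = \sum_{i=1}^{j} k^{-i}\sigma_i$ and $C_{k,v,j} = \hat S_b + \sum_{i=1}^{j} k^{-i} c_i$ denote the data produced by Proposition \ref{prop:approxsolns}, and define
\[
F_{k,v}(\phi,\sigma) = \Scal\bigl(\omega_k + \ddb(\phi_{k,v,j}+\phi),\, J_{k,v,j}\bigr) - \hat S_k - h^{\phi_{k,v,j}+\phi}_{\sigma_{k,v,j}+\sigma} - C_{k,v,j},
\]
viewed as a map $L^2_{q+4}(g_{k,v,j}) \times \mfk \to L^2_q(g_{k,v,j})$. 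Because $i\,\Id_E \in \mfk$ gives rise to the constant Hamiltonian, the space $\bar{\mfk}^M_{\psi}$ contains all constants; consequently, any solution of $F_{k,v}(\phi,\sigma) = 0$ satisfies $\Scal(\omega_k + \ddb\phi_{k,v}, J_{k,v}) - \hat S_k \in \bar{\mfk}^M_{\phi_{k,v}}$ with $\phi_{k,v} = \phi_{k,v,j} + \phi$, which is exactly the vanishing of $\Pi^\perp_{k,\phi_{k,v}}$ applied to $\Scal - \hat S_k$.

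The three ingredients needed for the implicit function theorem are then in place. First, Proposition \ref{prop:approxsolnsSobolev} gives the error bound $\|F_{k,v}(0,0)\|_{L^2_q(g_{k,v,j})} \leq C k^{n/2 - j - 1}$, which can be made as small as desired by choosing $j$ large. Second, the linearisation of $F_{k,v}$ at $(0,0)$ in a direction $(\psi,\tau)$ is
\[
-\mathcal{D}^*_{g_{k,v,j}}\mathcal{D}_{g_{k,v,j}}(\psi) + \tfrac{1}{2}\langle \nabla \Scal(\omega_{k,v,j}, J_{k,v,j}), \nabla\psi\rangle - h^{\phi_{k,v,j}}_\tau - \nu_{\sigma_{k,v,j}+\tau}(\psi),
\]
which differs from the operator $L_{k,v}$ of Proposition \ref{prop:lichnerowicz} only by lower-order terms in $k$ (because $\Scal(\omega_{k,v,j},J_{k,v,j}) = \hat S_b + O(k^{-1})$ and $\sigma_{k,v,j} = O(k^{-1})$); thus for $k$ sufficiently large it still has a right inverse of operator norm $O(k^3)$. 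Third, a standard quadratic estimate for the scalar curvature operator, combined with Sobolev embeddings with respect to $g_{k,v,j}$ (as in \cite{fine04,bronnle15,dervansektnan20}), controls the nonlinearity with a bound of the form $k^{A}\,\bigl(\|(\phi_1,\sigma_1)\| + \|(\phi_2,\sigma_2)\|\bigr)\,\|(\phi_1,\sigma_1)-(\phi_2,\sigma_2)\|$ in $L^2_{q+4}(g_{k,v,j})$ for a fixed exponent $A$.

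Choosing $j$ sufficiently large so that the product of the initial error, the inverse norm, and the quadratic factor remains below the contraction threshold uniformly in $k \gg 0$ and $|v| \leq \delta$, the Banach fixed point theorem yields a unique small solution $(\phi, \sigma)$, and we set $\phi_{k,v} = \phi_{k,v,j} + \phi$. The main technical obstacle is extracting the precise $k$-dependence of the quadratic estimate in the weighted Sobolev norms $L^2_{q+4}(g_{k,v,j})$, and ensuring that this dependence combined with the $O(k^3)$ right inverse from Proposition \ref{prop:lichnerowicz} still permits a contraction; this is the point at which the freedom to take $j$ arbitrarily large in Proposition \ref{prop:approxsolnsSobolev} is essential. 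All other steps, including the smooth dependence on $v$, follow from the usual Banach-space implicit function theorem applied uniformly in the parameter $v$ in a small ball about the origin of $\cV$.
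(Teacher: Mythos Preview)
Your approach is essentially the same as the paper's: apply a quantitative implicit function theorem with the approximate solutions of Proposition~\ref{prop:approxsolns} as starting point, the $O(k^3)$ right-inverse bound from Proposition~\ref{prop:lichnerowicz} (transferred to the perturbed metrics as in Proposition~\ref{prop:linearisation}), and a Lipschitz/quadratic estimate for the nonlinear part (Lemma~\ref{lem:mvt} in the paper), then choose $j$ large enough so that the initial error beats the required threshold.

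One small bookkeeping slip: with your definition of $F_{k,v}$ you subtract \emph{both} $\hat S_k$ and $C_{k,v,j}=\hat S_b+\sum k^{-i}c_i$, so $F_{k,v}(0,0)$ contains the constant $-\hat S_k$ (of order one) in addition to the $O(k^{n/2-j-1})$ remainder from Proposition~\ref{prop:approxsolnsSobolev}; the stated bound $\|F_{k,v}(0,0)\|\le Ck^{n/2-j-1}$ is therefore false as written. This is harmless since, as you note, $i\,\Id_E\in\mfk$ makes constants live in $\overline\mfk^M_\psi$: simply drop the redundant $\hat S_k$ from $F_{k,v}$, or shift the base point in the $\sigma$-variable by the appropriate multiple of $i\,\Id_E$. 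The paper handles this by linearising $\Phi_{k,v,j}$ at $(0,\sigma_{k,v,j})$ rather than at the origin. Also, the paper's Lemma~\ref{lem:mvt} gives a Lipschitz constant \emph{uniform} in $k$ (your $k^A$ is unnecessarily pessimistic, though still sufficient), which yields the explicit threshold $j\ge 6+\tfrac{n}{2}$.
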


We will prove this using the implicit function theorem. This uses the following.
\begin{proposition}
\label{prop:linearisation}
Let $(\omega_{k,v,j}= \omega_k + \ddb \phi_{k,v,j}, J_{k,v,j})$ be the K\"ahler structure on $M$ constructed in Proposition \ref{prop:approxsolns} and let $\sigma_{k,v,j} \in \mfk$ be such that $m^* \sigma_{k,v,j} = \sum_{i=1}^j k^{-i} h_i$. For each $j \gg 0$, there exists $C, k_0, \delta > 0$ such that for all $v$ such that $|v| \leq \delta$ and $k\geq k_0$ the linearisation $L_{k,v,j}$ of the map $$\Phi_{k,v,j} : L^2_{q+4}(g_{k,v}) \times \mfk \to L^2_{q}(g_{k,v})$$ given by
$$
(\phi, \sigma) \mapsto \Scal(\omega_{k,v,j} + \ddb(\phi), J_{k,v,j}) - \hat S_k - h^{\phi_{k,v,j}}_{\sigma} - \nu_{\sigma} (\phi)
$$
at $(0, \sigma_{k,v,j})$ is surjective and has right inverse $Q_{k,v,j}$ satisfying 
$$
\| Q_{k,v,j} \| \leq C k^3.
$$
\end{proposition}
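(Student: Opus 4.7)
The plan is to compute the linearisation $L_{k,v,j}$ explicitly and exhibit it as a controlled perturbation of the operator $L_{k,v}$ of Proposition \ref{prop:lichnerowicz}, so that a direct adaptation of that proof delivers a right inverse of norm $O(k^3)$. Differentiating each of the four terms of $\Phi_{k,v,j}$ at $(0, \sigma_{k,v,j})$ and using the standard linearisation \eqref{eq:linearisation_scalar_curvature} of the scalar curvature, one arrives at
\[
L_{k,v,j}(\dot\phi, \dot\sigma) = -\mathcal{D}^*_{g_{k,v,j}}\mathcal{D}_{g_{k,v,j}}(\dot\phi) + \tfrac{1}{2}\langle \nabla \Scal(\omega_{k,v,j}), \nabla \dot\phi\rangle - h^{\phi_{k,v,j}}_{\dot\sigma} - \nu_{\sigma_{k,v,j}}(\dot\phi).
\]
This has exactly the shape of $L_{k,v}$, with the modified metric and complex structure and with the base point shifted to $\sigma_{k,v,j}$, except for the additional first-order term $\tfrac{1}{2}\langle\nabla \Scal, \nabla\dot\phi\rangle$, which is absent from $L_{k,v}$ because it vanishes when the scalar curvature is constant.

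I would then mirror the construction of the right inverse in Proposition \ref{prop:lichnerowicz}: for $\phi \in L^2_q$, take $\dot\sigma$ to be the $L^2(g_{k,v,j})$-orthogonal projection of $\phi$ onto $\overline{\mfk}^{\phi_{k,v,j}}$ and $\dot\psi$ to be the orthogonal representative. The potentials $\phi_{k,v,j}$ and endomorphisms $s_i$ used in the approximate solution are uniformly bounded in $C^\infty$ and decay with $k$, so $g_{k,v,j}$ remains uniformly equivalent to the product metric $h_k$ of Lemma \ref{lem:uniformmetrics} and the complex structures $J_{k,v,j}$ stay close to $J_v$. The Poincar\'e-type bounds of Lemmas \ref{lem:dPI}, \ref{lem:dbarestimate} and \ref{lem:DPI} therefore carry over verbatim, giving $\|\mathcal{D}_{g_{k,v,j}}\dot\psi\|^2_{L^2(g_{k,v,j})} \geq Ck^{-3}\|\dot\psi\|^2_{L^2(g_{k,v,j})}$ on the orthogonal complement. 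In the resulting energy identity, the term $h^{\phi_{k,v,j}}_{\dot\sigma}$ lies in $\overline{\mfk}^{\phi_{k,v,j}}$ and drops out by orthogonality, while $\langle\nu_{\sigma_{k,v,j}}(\dot\psi), \dot\psi\rangle=0$ because $\nu_{\sigma_{k,v,j}}$ is Hamiltonian with respect to $\omega_{k,v,j}$ by the $K$-equivariance of the construction. A Schauder estimate analogous to Lemma \ref{lem:schauder} then bootstraps the resulting $L^2$-bound on $\dot\psi$ to the full $L^2_{q+4}$-bound, yielding $\|Q_{k,v,j}\| \leq Ck^3$.

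The hard part is the extra first-order term $\tfrac{1}{2}\langle\nabla\Scal, \nabla\dot\phi\rangle$. Its coefficient has $L^\infty$-norm of order $k^{-1}$, while the spectral lower bound on the Lichnerowicz part is only of order $k^{-3}$, so a Neumann-series perturbation of $L_{k,v}$ cannot close. The way around this is to exploit the structure of the approximate solution: by Proposition \ref{prop:approxsolns}, $\Scal(\omega_{k,v,j})-\widehat{S}_k$ equals the Hamiltonian $h^{\phi_{k,v,j}}_{\sigma_{k,v,j}}$ up to an error of order $k^{-j-1}$, which we make arbitrarily small by taking $j$ large. Integration by parts reduces the extra term in the energy identity to $-\tfrac{1}{4}\int (\Delta\Scal)\dot\psi^2$, and the splitting $C^{\infty}(X)=C^{\infty}(B)\oplus C^{\infty}_W(X)\oplus C^{\infty}_R(X)$ lets one separate the contributions: on $C^{\infty}_R$ the fibrewise Poincar\'e bound for $\mathcal{D}_V^*\mathcal{D}_V$ is of order $1$, dominating any $O(k^{-1})$-perturbation, while on the $C^{\infty}_W$-component the multiplication by $\Delta\Scal$ acts (up to the $O(k^{-j-1})$-error) as the fibrewise Fubini-Study Laplacian applied to a holomorphy potential, so its effect can be absorbed into a shift of $\sigma_{k,v,j}$ at the cost of an $O(k^{-1})$-change that is still compatible with the target $Ck^3$-bound on $Q_{k,v,j}$. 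The base component is then handled by the usual base-direction Poincar\'e argument, using that $\mrm{Aut}(B,L)$ is discrete.
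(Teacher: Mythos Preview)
Your computation of the linearisation is correct, and you correctly isolate the obstacle: the first-order term $\tfrac{1}{2}\langle\nabla\Scal,\nabla\dot\phi\rangle$ is only $O(k^{-1})$ while the Poincar\'e bound is $O(k^{-3})$. But you miss the observation that makes the paper's proof short: this term \emph{cancels pointwise} against $-\nu_{\sigma_{k,v,j}}(\dot\phi)$, up to $O(k^{-j-1})$. Indeed, you already note that $\Scal(\omega_{k,v,j},J_{k,v,j}) = \hat S_k + h^{\phi_{k,v,j}}_{\sigma_{k,v,j}} + O(k^{-j-1})$; since $h^{\phi_{k,v,j}}_{\sigma_{k,v,j}}$ is by construction a Hamiltonian for $\nu_{\sigma_{k,v,j}}$ with respect to $\omega_{k,v,j}$, its gradient \emph{is} $\nu_{\sigma_{k,v,j}}$ (in the paper's conventions), and so $\langle\nabla\Scal,\nabla\dot\phi\rangle - \nu_{\sigma_{k,v,j}}(\dot\phi) = O(k^{-j-1})$ as an operator on $\dot\phi$. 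This is precisely why the map $\Phi_{k,v,j}$ subtracts $\nu_\sigma(\phi)$ and is linearised at $\sigma=\sigma_{k,v,j}$ rather than at $\sigma=0$. After the cancellation, $L_{k,v,j}$ differs from the Lichnerowicz-type operator of Proposition~\ref{prop:lichnerowicz} (transferred to the perturbed metric) by an $O(k^{-j-1})$ operator, which is absorbed by the $O(k^3)$ right inverse once $j$ is large enough.

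Your alternative route through the splitting $C^\infty(B)\oplus C^\infty_W\oplus C^\infty_R$ is not only unnecessary but, as written, does not close. You handle $\tfrac{1}{2}\langle\nabla\Scal,\nabla\dot\phi\rangle$ and $\nu_{\sigma_{k,v,j}}(\dot\phi)$ separately, throwing away the latter in the energy identity and trying to absorb the former block by block. On the $C^\infty_W$-block orthogonal to $\overline\mfk$ the Lichnerowicz eigenvalue is only $O(k^{-1})$, the same order as the error, so the proposed ``shift of $\sigma_{k,v,j}$'' has no meaning at a fixed linearisation point and cannot repair this; the cross terms between the blocks are also not addressed. The fix is simply to notice the cancellation above.
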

\begin{proof}
Proposition \ref{prop:lichnerowicz} applies to the initial K\"ahler structure $(\omega_k, J_v)$. However, from the construction in Proposition \ref{prop:approxsolns} of $(\omega_{k,v,j}, J_{k,v,j})$, we see that for each $j$, the corresponding Lichnerowicz operators satisfy a similar bound as in Proposition \ref{prop:lichnerowicz}. Thus the required bound holds for the Lichnerowicz operator of $(\omega_{k,v,j}, J_{k,v,j})$. The actual linearised operator in the $\phi$ variable differs from the negative of the Lichnerowicz operator by the term
$$
\langle \nabla_{g_{k,v,j}} \Scal(\omega_{k,v,j}, J_{k,v,j}) , \nabla_{g_{k,v,j}} \phi \rangle_{g_{k,v,j}}.
$$
Now, 
\begin{align*} 
\Scal(\omega_{k,v,j}, J_{k,v,j}) =& \hat S_k + \sum_{i=1}^j k^{-i} h^{\phi_{k,v,j}}_i + O(k^{-j-1}) \\
=&  \hat S_k + h_{\sigma_{k,v,j}} + \nu_{\sigma_{k,v,j}}(\phi_{k,v,j}) + O(k^{-j-1}),
\end{align*}
and so
\begin{align*} 
\nabla_{g_{k,v,j}} \Scal(\omega_{k,v,j}, J_{k,v,j}) =& \nu_{\sigma_{k,v,j}} + O(k^{-j-1}).
\end{align*}
So when we linearise at $\sigma = \sigma_{k,v,j}$, we see that $L_{k,v,j}$ differs from the negative of the Lichnerowicz operator by a term which is $O(k^{-j-1})$. Thus, if $j$ is chosen sufficiently large, the linearised operator admits a similar bound, which is what we wanted to show.
\end{proof}
We will also need the following application of the mean value theorem.
\begin{lemma}
\label{lem:mvt}
Let $\Psi_{k,v,j} = \Phi_{k,v,j} - L_{k,v,j}$. There exists $c,C > 0$ such that for all sufficiently small $v$ and all $k \gg 0$, if $x,y \in L^2_{q+4}(g_{k,v}) \times \mfk$ satisfy $\| x \|_{L^2_{q+4}} < c$ and $\| y \|_{L^2_{q+4}} < c$, then 
\begin{align*}
\| \Psi_{k,v,j} (x) - \Psi_{k,v,j} (y) \|_{L^2_{q}} \leq C \left( \| x \|_{L^2_{q+4}} + \| y \|_{L^2_{q+4}} \right) \| x - y \|_{L^2_{q+4}}.
\end{align*}
\end{lemma}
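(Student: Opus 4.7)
The plan is to reduce this estimate to a uniform bound on the second derivative $D^2\Phi_{k,v,j}$ near the basepoint $(0,\sigma_{k,v,j})$. Since $\Psi_{k,v,j}$ is obtained by subtracting from $\Phi_{k,v,j}$ its affine approximation at $(0,\sigma_{k,v,j})$, the derivative $D\Psi_{k,v,j}$ vanishes at that point. Two applications of the fundamental theorem of calculus along line segments then give
\begin{align*}
\Psi_{k,v,j}(x) - \Psi_{k,v,j}(y) = \int_0^1\!\!\int_0^1 D^2\Phi_{k,v,j}\big|_{z_{s,t}}\!\bigl(y+t(x-y)-(0,\sigma_{k,v,j}),\, x-y\bigr)\, ds\, dt,
\end{align*}
where $z_{s,t} = (0,\sigma_{k,v,j}) + s\bigl(y+t(x-y)-(0,\sigma_{k,v,j})\bigr)$. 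Since $\sigma_{k,v,j} = \sum_{i=1}^{j} k^{-i}\sigma_i$ has norm $O(k^{-1})$ in $\mfk$, the factor $\|y+t(x-y)-(0,\sigma_{k,v,j})\|_{L^2_{q+4}}$ is controlled by $\|x\|_{L^2_{q+4}}+\|y\|_{L^2_{q+4}}$ up to a small additive error that can be absorbed by shrinking $c$ and enlarging $C$ for $k$ sufficiently large. It therefore suffices to prove a uniform bound
\[
\|D^2\Phi_{k,v,j}\big|_z(u,w)\|_{L^2_q(g_{k,v})} \leq C \|u\|_{L^2_{q+4}(g_{k,v})}\|w\|_{L^2_{q+4}(g_{k,v})}
\]
for $z$ in a fixed small neighbourhood of $(0,\sigma_{k,v,j})$, uniformly in $v$ small and $k$ large.

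To establish this bound, decompose $\Phi_{k,v,j} = \Phi_1 + \Phi_2$, where $\Phi_1(\phi,\sigma) = \Scal(\omega_{k,v,j} + \ddb\phi, J_{k,v,j}) - \hat S_k$ and $\Phi_2(\phi,\sigma) = -h^{\phi_{k,v,j}}_\sigma - \nu_\sigma(\phi)$. The map $\Phi_2$ is affine in $\sigma$ and at most bilinear in $(\phi,\sigma)$, so its second derivative consists only of mixed terms; since $\mfk$ is finite-dimensional and acts by first-order differential operators with smooth coefficients, these terms are bounded directly. For $\Phi_1$, the scalar curvature is a fully nonlinear fourth-order operator in $\phi$, and its second variation is, in local charts, a polynomial expression in the jets of $u$ and $w$ up to order four, with coefficients that are smooth functions of $\phi$ and its derivatives of order at most two (entering through the inverse of $g_{k,v,j}+\ddb\phi$ and its determinant). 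Standard Sobolev multiplication theorems, combined with the embedding $L^2_{q+2}\hookrightarrow C^0$ for $q$ sufficiently large, then yield the required pointwise bound.

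The principal obstacle is the uniformity of $C$ in $k$: the Sobolev norms and the relevant differential operators depend on the adiabatically scaled metric $g_{k,v}$. Following the strategy of \cite[\S 5]{fine04}, this is handled by passing via Lemma \ref{lem:uniformmetrics} to the product metric $h_k$, which is uniformly equivalent to $g_{k,v}$, and tracking the scaling between $h_k$- and $h_1$-Sobolev norms on each tensor bundle as in Lemma \ref{lem:scalingcomparison}. By Proposition \ref{prop:approxsolnsSobolev} the background potential $\phi_{k,v,j}$ has uniformly controlled Sobolev norm with respect to $g_{k,v}$, so the implicit coefficients in the second variation of $\Phi_1$ are uniformly bounded in $C^0$, and the degree counting arising from the fourth-order character of $\Scal$ combined with the adiabatic scaling works out to give a uniform constant in the stated multiplication estimate.
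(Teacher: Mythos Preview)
The paper states this lemma without proof, describing it only as ``an application of the mean value theorem'', so your approach via a double integral of $D^2\Phi_{k,v,j}$ is exactly the argument the authors have in mind. The decomposition $\Phi_{k,v,j}=\Phi_1+\Phi_2$ into the scalar curvature part and the bilinear holomorphy-potential part, together with the appeal to Fine's adiabatic Sobolev machinery for the $k$-uniformity of the second-variation bound, is the standard way to flesh this out and is correct.

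One step does not quite go through as written. Your claim that the $O(k^{-1})$ contribution from $\sigma_{k,v,j}$ ``can be absorbed by shrinking $c$ and enlarging $C$'' fails: writing out your integral, the residual term is of the form $C'\,\|\sigma_{k,v,j}\|\,\|x-y\| = O(k^{-1})\|x-y\|$, and this cannot be dominated by $C(\|x\|+\|y\|)\|x-y\|$ uniformly when $\|x\|,\|y\|$ are permitted to be arbitrarily small (in the eventual application one takes the ball radius to be $O(k^{-3})$, much smaller than $k^{-1}$). The issue is not with your method but with the way the ball is centred: the clean statement, and the one actually needed for Theorem~\ref{thm:IFT}, has the ball centred at the basepoint $x_0=(0,\sigma_{k,v,j})$ and replaces $\|x\|,\|y\|$ on the right-hand side by $\|x-x_0\|,\|y-x_0\|$. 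With that adjustment your double-integral representation gives the estimate directly with no residual term, and the proof of Proposition~\ref{prop:findimreduction} goes through verbatim. This imprecision is shared by the paper's own statement of the lemma, so you should simply note the recentring rather than attempt the absorption.
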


The above allows us to use the following quantitative implicit function theorem to prove Proposition \ref{prop:findimreduction}.
\begin{theorem}[{\cite[Theorem 25]{bronnle15}}]
\label{thm:IFT}
Suppose $F : \cB_1 \to \cB_2$ is a differentiable map of Banach spaces whose derivative at $x_0$ is surjective with right-inverse $Q$. Let
\begin{itemize}
\item $\delta'$ denote the radius of a ball in $\cB_1$ such that $F - D_{x_0}F$ is Lipshitz with constant $\frac{1}{2\norm{Q}}$;
\item $\delta = \frac{\delta'}{2 \norm{Q}}$.
\end{itemize}
Then for all $z \in \cB_2$ such that $\| z - F(x_0) \|_{\cB_2} < \delta$ there exists an $x \in \cB_1$ with $\| x - x_0 \|_{\cB_1} < \delta'$ such that $F(x) = z.$
\end{theorem}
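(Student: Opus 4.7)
\emph{Proof proposal.} The plan is to recast $F(x) = z$ as a fixed-point problem on $\cB_2$ and invoke the Banach fixed-point theorem. Because $Q$ is only a right inverse of $D_{x_0}F$, I cannot work directly with the increment $x - x_0 \in \cB_1$; instead I would parametrise candidate solutions by $w \in \cB_2$ through $x = x_0 + Q(w)$, so that the unknown lives in the codomain, where the smallness hypothesis $\norm{z - F(x_0)} < \delta$ is naturally phrased.

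Concretely, let
\[
R(w) \;:=\; F(x_0 + Q(w)) - F(x_0) - D_{x_0}F(Q(w))
\]
denote the nonlinear remainder of $F$ along the image of $Q$. Using the right-inverse identity $D_{x_0}F \circ Q = \Id$, the equation $F(x_0 + Q(w)) = z$ rearranges as
\[
w \;=\; \bigl(z - F(x_0)\bigr) - R(w) \;=:\; T(w),
\]
so fixed points of $T$ produce genuine solutions of the original equation. Moreover, any $w$ with $\norm{w}_{\cB_2} < \delta'/\norm{Q}$ gives an $x$ with $\norm{x - x_0}_{\cB_1} < \delta'$, so it suffices to produce a fixed point of $T$ on the closed ball $\overline{B}$ of radius $\delta'/\norm{Q}$ in $\cB_2$.

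Next I would verify that $T$ is a $\tfrac{1}{2}$-contraction on $\overline{B}$. For $w_1, w_2 \in \overline{B}$, setting $x_i := x_0 + Q(w_i)$ gives points with $\norm{x_i - x_0}_{\cB_1} \leq \delta'$, and a direct computation yields
\[
T(w_1) - T(w_2) \;=\; -\bigl[(F - D_{x_0}F)(x_1) - (F - D_{x_0}F)(x_2)\bigr].
\]
The Lipschitz hypothesis on $F - D_{x_0}F$ then gives
\[
\norm{T(w_1) - T(w_2)}_{\cB_2} \;\leq\; \tfrac{1}{2\norm{Q}}\norm{x_1 - x_2}_{\cB_1} \;\leq\; \tfrac{1}{2}\norm{w_1 - w_2}_{\cB_2}.
\]
Invariance of the ball, which is where the hypothesis $\norm{z - F(x_0)} < \delta$ enters, follows from $\norm{T(0)}_{\cB_2} = \norm{z - F(x_0)}_{\cB_2} < \delta = \delta'/(2\norm{Q})$ combined with the contraction estimate: for $w \in \overline{B}$,
\[
\norm{T(w)}_{\cB_2} \;\leq\; \norm{T(0)}_{\cB_2} + \tfrac{1}{2}\norm{w}_{\cB_2} \;<\; \tfrac{\delta'}{2\norm{Q}} + \tfrac{\delta'}{2\norm{Q}} \;=\; \tfrac{\delta'}{\norm{Q}}.
\]
The Banach fixed-point theorem then supplies a unique $w \in \overline{B}$ with $T(w) = w$, and $x := x_0 + Q(w)$ satisfies $F(x) = z$ with $\norm{x - x_0}_{\cB_1} < \delta'$. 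The only genuine subtlety I see is at the very first step: parametrising via $Q$ rather than via the increment in $\cB_1$ is what lets the right-inverse structure be used cleanly, after which the argument is the standard Newton--Kantorovich iteration.
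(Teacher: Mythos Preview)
Your argument is correct and is the standard Newton--Kantorovich contraction-mapping proof of this quantitative implicit function theorem. Note that the paper does not supply its own proof of this statement: Theorem~\ref{thm:IFT} is quoted from \cite[Theorem~25]{bronnle15} and used as a black box in the proof of Proposition~\ref{prop:findimreduction}, so there is no in-paper argument to compare against. Your parametrisation $x = x_0 + Q(w)$ through the codomain is exactly the right device when only a right inverse is available, and the contraction and ball-invariance estimates are carried out cleanly; the fixed point $w$ then satisfies $\norm{w}_{\cB_2} = \norm{T(w)}_{\cB_2} < \delta'/\norm{Q}$ strictly, which is what yields the strict bound $\norm{x - x_0}_{\cB_1} < \delta'$ claimed in the statement.
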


We can now prove Proposition \ref{prop:findimreduction}.
\begin{proof}[Proof of Proposition \ref{prop:findimreduction}]
We apply Theorem \ref{thm:IFT} to $\cB_1 = L^2_{q+4} \times \mfk$ and $\cB_2 = L^2_q$, with $F=\Phi_{k,v,j}$ and $x_0 = (0, \sigma_{k,v,j})$ for a sufficiently large $j$. We will let $v$ be sufficiently small and $k$ sufficiently large so that Proposition \ref{prop:approxsolnsSobolev}, Proposition \ref{prop:linearisation} and Lemma \ref{lem:mvt} apply. Note that this depends on $j$, but we will shortly fix which $j$ we will use.

To see that Theorem \ref{thm:IFT} gives a solution when $j$ is chosen sufficiently large, we first of all note that Lemma \ref{lem:mvt} implies that for all sufficiently small $\zeta$, $F - dF=\Phi_{k,v,j}-L_{k,v,j}$ is Lipschitz of Lipschitz constant $C \zeta$ on the ball of radius $\zeta$ in $L^2_{q+4} \times \mfk$. By Proposition \ref{prop:linearisation}, there is a $C'>0$ such that $\frac{1}{2\|Q_{k,v,j}\|} > C' k^{-3}$. Thus on the ball of radius $\delta'_k = \frac{C'}{Ck^3}$ in $L^2_{q+4} \times \mfk$, $\Phi_{k,v,j}-L_{k,v,j}$ is Lipschitz of Lipschitz constant $\frac{1}{2\|Q_{k,v,j}\|}$. Using the bound on $\frac{1}{2\|Q_{k,v,j}\|}$ again, the corresponding $\delta$ to which is Theorem \ref{thm:IFT} applies is therefore bounded below by $\delta_k = C'' k^{-6}$ for some $C''>0$. 

The result follows if we then chose $j$ from the start so that Proposition \ref{prop:linearisation} applies, and such that Proposition \ref{prop:approxsolnsSobolev} gives an $O(k^{-7})$ bound for the $L^2_q(g_{k,v,j})$-norm of $\Phi_{k,v,j}(0,\sigma_{k,v,j})$ (which holds if $j \geq 6+\frac{n}{2}$).
\end{proof}

The above gives a new K\"ahler structure on the fibration $\cU \to \cV$, and therefore a new moment map on $\cU$ for the $K$-action, for each $k$. Applying the Dervan-Hallam mechanism of Theorem \ref{thm:dervanhallam} to these metrics, we get a new moment map on $\cV$ for the $K$-action. We end this section by showing that zeros of the moment map we have now produced genuinely correspond to cscK metrics. We first explain what we have solved so far using the notation introduced in \S\ref{sec:mfk}. 

For every $v \in \cV$, we have produced a $\phi_{k,v} \in C^{\infty}(M)$ and an almost complex structure $J_{k,v}$ on $M$ such that 
$$
\Scal(\omega_k + \ddb \phi_{k,v}, J_{k,v}) - \hat S_k \in \overline \mfk^M_{\phi_{k,v}},
$$
where $J_{k,v}$ is the complex structure $J_{k,v,j}$ which we used when we proved Proposition \ref{prop:findimreduction}. By the smooth dependence on $v$ for the $\sigma_{k,v,j}$, the $J_{k,v}$ are the restrictions to the fibre over $v \in \cV$ of an almost complex structure $J_k$ on the underlying smooth manifold of $\cU$. We denote the corresponding complex manifold $\cU_k$ -- as a smooth manifold these are all isomorphic to $\cU$, i.e.\ to $M \times \cV$. The projection $\cU_k \to \cV$ is thus still a $K$-equivariant holomorphic map. 

The $\phi_{k,v}$ then glue to a function $\phi_k$ on $\cU_k$ which is actually smooth, again because of the smooth dependence of the $\phi_{k,v}$ on $v$. Thus we have shown that
$$
\Scal_{\mrm{}}((\omega_k + \ddb \phi_{k})\big|_{\cU_{k,v}}) - \hat S_k \in  \pr_2^* (\overline \mfk^M_{\phi_{k,v}})
$$
for every $v \in \cV$, where $\cU_{k,v}$ is the fibre at $v$ of $\cU_k \to \cV$.
However, the moment map of the $K$-action on $\cU_k$ takes values in the dual of $ \overline \mfk_{\phi_k} \big|_{\cU_{k,v}}$  when restricted to a fibre. If these were equal, we would immediately obtain that a zero of the moment map corresponds to a cscK metric on the fibre, but $\overline \mfk^M_{\phi_{k,v}} \neq \overline \mfk_{\phi_k} \big|_{\cU_{k,v}}$ in general.

\begin{definition}\label{Def:momentmap_muk}
Let $\tau_k : \cU_k \to \mfk^*$ be the sequence of moment maps for the $K$-action on $\cU_k$ with respect to $\omega_k + \ddb \phi_k$. We denote by $\mu_k$  the corresponding sequence of moment maps on $\cV$, given by Theorem \ref{thm:dervanhallam}.
\end{definition}

\begin{lemma}\label{Lemma:injective_projection}
A point $v \in \cV$ is a zero of the moment map $\mu_k$ if and only if the scalar curvature of $\omega_k$ on $\P(\cE_v)$ is constant.
\end{lemma}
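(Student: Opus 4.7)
The ``if'' direction is immediate: by Theorem \ref{thm:dervanhallam}, if the scalar curvature of $(\omega_k + \ddb\phi_{k,v}, J_{k,v})$ equals $\widehat{S}_k$, then the integrand defining $\langle \mu_k(v), \tau \rangle$ vanishes identically, so $\mu_k(v) = 0$. The content of the lemma is the converse.

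By Proposition \ref{prop:findimreduction}, $\Scal(\omega_k + \ddb\phi_{k,v}, J_{k,v}) - \widehat{S}_k$ lies in the finite-dimensional space $\overline \mfk^M_{\phi_{k,v}}$; write it as $f_{\sigma_0}$, where $f_\sigma := \widetilde h_\sigma + \xi_\sigma(\phi_{k,v})$ and $\sigma_0 \in \mfk$. The map $\sigma \mapsto f_\sigma$ is injective on $\mfk$: if $f_\sigma = 0$ then $\xi_\sigma = 0$, forcing $\sigma \in \bb{R}\cdot i\Id_E$, and then $f_\sigma$ is a nonzero constant unless $\sigma = 0$. Combining Theorem \ref{thm:dervanhallam} with Lemma \ref{lem:changeinholpot}, the condition $\mu_k(v) = 0$ unwinds to
$$
B_{k,v}(\tau, \sigma_0) := \int_{\cU_{k,v}} g_\tau\, f_{\sigma_0}\,(\omega_k + \ddb\phi_{k,v})^{n+r} = 0 \qquad \text{for all } \tau \in \mfk,
$$
where $g_\tau := \widetilde h_\tau + \nu_\tau(\phi_k)|_{\cU_{k,v}}$. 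It therefore suffices to prove that $B_{k,v}$ is a non-degenerate bilinear form on $\mfk$ for large $k$ and small $v$, so that $\sigma_0 = 0$ and hence $\Scal(\omega_k + \ddb \phi_{k,v}, J_{k,v}) = \widehat{S}_k$.

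To establish non-degeneracy, use the horizontal-vertical decomposition $\nu_\tau = \pr_1^*\eta_\tau + \pr_2^*\xi_\tau$ from \S\ref{sec:mfk} to split
$$
B_{k,v}(\tau, \sigma) = \int_M f_\tau f_\sigma\, \omega_{k,v}^{n+r} + \int_M \bigl(\eta_\tau(\phi_k)|_{\cU_{k,v}}\bigr) f_\sigma\, \omega_{k,v}^{n+r}.
$$
The first (symmetric) piece is, to leading order, $\tfrac{(n+r)!}{n!\,r!}\, k^n \int_B \bigl(\int_{X/B} \widetilde h_\tau \widetilde h_\sigma\, \omega^r\bigr)\omega_B^n$, which is positive-definite on $\mfk$ by injectivity of $\sigma \mapsto \widetilde h_\sigma$ and positivity of the fibrewise Fubini--Study $L^2$ pairing. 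To control the second piece, split $\mfk = (\mfk \cap \Gamma(\End_0 E)) \oplus \bb{R}\cdot i\Id$; since $i\Id$ acts trivially on both $\cV$ and $M$, one has $\nu_{i\Id} = 0$ and hence $g_{i\Id} = -1$, so $B_{k,v}(i\Id, \sigma) = -\int_M f_\sigma\,\omega_{k,v}^{n+r}$, and a fibrewise Stokes argument (using that $\xi_\sigma$ is a vertical Hamiltonian vector field preserving $\omega^r$) shows this is $O(k^{n-1})$ for trace-free $\sigma$. In a basis adapted to this splitting, the normalised matrix $k^{-n}B_{k,v}$ is therefore block upper-triangular with positive diagonal blocks to leading order, and the subleading correction is $O(k^{-1})$ uniformly for $v$ small. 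This forces non-degeneracy for $k \gg 0$ and $v$ in a sufficiently small neighbourhood of $0$. The delicate point is the uniform-in-$v$ control of the subleading error, which follows from the smooth dependence of $\phi_k$ on $v$ established in the approximate-solution construction.
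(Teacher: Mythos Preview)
Your overall strategy matches the paper's: reduce to showing that the bilinear pairing $B_{k,v}(\tau,\sigma)=\int g_\tau f_\sigma$ is non-degenerate, so that $f_{\sigma_0}\perp g_\tau$ for all $\tau$ forces $\sigma_0=0$. Where you diverge is in the mechanism for non-degeneracy. The paper observes that $g_\tau-f_\tau=\eta_\tau(\phi_k)|_{\cU_{k,v}}$ and that the vector field $\eta_\tau$ on $\cV$ vanishes at the origin (a $K$-fixed point). Hence $g_\tau=f_\tau+O(|v|)$, the projection from $\iota_{2,k,v}(\mfk)$ to $\iota_{1,k,v}(\mfk)$ is the identity plus $O(|v|)$, and non-degeneracy follows by continuity in $v$ alone, with no analysis of the $k$-behaviour of $\phi_k$ required.

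You instead try to use $k$-largeness, asserting that $k^{-n}B_{k,v}$ is block upper-triangular with positive diagonal blocks up to $O(k^{-1})$. The gap is in the trace-free block: your argument only handles the $(i\Id,\mfk_0)$ row, not the $(\mfk_0,\mfk_0)$ block. For the latter you need $\int_M \eta_\tau(\phi_k)\,f_\sigma\,\omega_{k,v}^{n+r}=O(k^{n-1})$ for trace-free $\tau,\sigma$. This is in fact true, but it requires knowing that the $O(1)$-in-$k$ part of $\phi_k$ is a base function (so that it pairs trivially with the fibrewise-mean-zero $\widetilde h_\sigma$), and that the implicit-function-theorem correction from Proposition~\ref{prop:findimreduction} has $\cV$-derivative controlled in $k$. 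Neither is justified by what you have written, and the second is genuinely delicate: smooth dependence of $\phi_k$ on $v$ does not by itself give uniform-in-$k$ bounds on the $\cV$-derivative of an output of an implicit function theorem whose constants degenerate with $k$. The paper's route via $|v|$-smallness sidesteps all of this: $\eta_\tau(\phi_k)|_v\to 0$ as $v\to 0$ regardless of how $\phi_k$ behaves in $k$.
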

\begin{proof}
Let $\iota_{1,k,v}, \iota_{2,k,v} : \mfk \to C^{\infty} (M)$ be given by
$$
\iota_{1,k,v} (\sigma) = \left( h_{\sigma} + \nu_{\sigma}(\phi_k) \right)\big|_{\cU_v}
$$
and
$$
\iota_{2,k,v} (\sigma) = \widetilde h_{\sigma} + \xi_{\sigma}(\phi_{k,v}) .
$$
The $\iota_{j,k,v}$ are injective linear maps. 
The corresponding images are then isomorphic to $\mfk$. We then have that $\iota_{1,k,v}(\mfk) = \overline \mfk_{\phi_k}\big|_{\cU_v},$ which is the image of the moment map $\mu_{\phi_k}$ with respect to $\omega_k + \ddb \phi_k$ associated to the $K$-action on $\cU$, thought of as a map from $\mfk$ to $C^{\infty}(\cU)$, restricted to the fibre $\cU_v$. Moreover, $\iota_{2,k,v}(\mfk) = \overline \mfk^M_{\phi_{k,v}}$.

Arguing as in the proof of Lemma \ref{lem:momentmapzero}, we see that $v \in \cV$ is a zero of the moment map if and only if $\Scal_V(\omega_k+\ddb\phi_{k,v}, J_v) - \hat S_k$ is orthogonal to the image of $\mu_{\phi_k}$, i.e. to $\iota_{1,k,v}(\mfk)$. Let $\Pi_{k,v}$ denote the $L^2(g_{k,v})$-orthogonal projection to $\iota_{1,k,v}(\mfk)$. Note that we have ensured that $\Scal(\omega_k+\ddb\phi_{k,v}, J_v) - \hat S_k \in \iota_{2,k,v}(\mfk)$ for all $v$ and $k$. The result will now follow from the claim that $\Pi_{k,v}$ restricted to $\iota_{2,k,v}(\mfk)$ is an isomorphism, as this then implies that $\Scal(\omega_k+\ddb\phi_{k,v}, J_v) - \hat S_k = 0$ if and only if $\Pi_{k,v} (\Scal(\omega_k+\ddb\phi_{k,v}, J_v) - \hat S_k) =0$. 

To see this, first note that
\begin{align*}
\iota_{1,k,v} (\sigma) =&\iota_{2,k,v} (\sigma) + \eta_{\sigma}(\phi_k)\big|_{\cU_v}
\end{align*}
since the restriction of $h_{\sigma}$ to $\cU_v$ is $\widetilde h_{\sigma}$ and the vertical part of $\nu_{\sigma}$ is $\xi_{\sigma}$. Now, let $\sigma^1, \ldots, \sigma^d$ denote a basis of $\mfk$ such that the $\widetilde h_i = \widetilde h_{\sigma^i}$ form an orthonormal basis with respect to the volume form $\omega^r \wedge \omega_B^n$. 
We regard the restriction of $\Pi_{k,v}$ to $\iota_{2,k,v}(\mfk)$ as a linear map from $\R^d$ to itself, through the isomorphisms $\iota_{j,k,v} : \mfk \to \iota_{j,k,v}(\mfk)$ and by using the above basis for $\mfk$.  The map is then 
$$
(\lambda_1, \ldots, \lambda_d) \mapsto (c_1, \ldots, c_d),
$$
where
\begin{align*}
c_j =& \sum_{i=1}^d \lambda_i \left( \frac{\int_M \iota_{2,k,v} (\sigma_i) \cdot \iota_{1,k,v} (\sigma_j)  (\omega_k+\ddb \phi_{k,v})^{r+n}}{\|\iota_{1,k,v} (\sigma_j)  \|_{L^2(\omega_{k}+\ddb \phi_{k,v})} } \right) .
\end{align*}
Now, as $\eta_{\sigma}$ vanishes when $v=0$ and $\phi_k = O(|v|)$, we see that 
$$
 \iota_{2,k,v} (\sigma_i) \cdot \iota_{1,k,v} (\sigma_j) =  \iota_{2,k,v} (\sigma_i) \cdot \iota_{2,k,v} (\sigma_j)  + O(|v|^2).
$$
Similarly, the contribution from $\xi_{\sigma}$ is $O(|v|)$ (in fact, $O(|v|k^{-1})$ since $\xi_{\sigma}$ is vertical with respect to the fibration structure $M \to B$). So, we see that
$$
 \iota_{2,k,v} (\sigma_i) \cdot \iota_{1,k,v} (\sigma_j) = \widetilde h_i \cdot \widetilde h_j + O(|v|).
$$
Expanding the volume forms and using the above, we then see that
\begin{align*}
\frac{\int_M \iota_{2,k,v} (\sigma_i) \cdot \iota_{1,k,v} (\sigma_j)  (\omega_k+\ddb \phi_{k,v})^{r+n}}{\|\iota_{1,k,v} (\sigma_j)  \|_{L^2(\omega_{k}+\ddb \phi_{k,v})} } = \int_M \widetilde h_i \cdot \widetilde h_j \omega^r \wedge \omega_B^n + O(|v|).
\end{align*}
Since the $\widetilde h_i$'s form an orthonormal basis with respect to $\omega^r \wedge \omega_B^n$, the upshot is that
\begin{align*}
c_j =& \sum_{i=1}^d \delta_{j}^i \lambda_i + O(|v|) ,
\end{align*}
which is a perturbation of the identity map $\mfk \to \mfk$. Thus for all sufficiently small $v$, $\Pi_{k,v}$ is an isomorphism as well, giving the result.
\end{proof}

\begin{remark} Lemma \ref{Lemma:injective_projection} can also be applied to resolve an analogous issue about the discrepancy in the projection onto the different Lie algebra that appears in an argument of Dervan \cite{dervan23}, in the proof of their Corollary 3.49. We thank R.\ Dervan for pointing out this issue to us. This method should apply generally to semistable perturbation problems to allow one to solve an easier fibrewise problem instead of a global problem on the (non-compact) universal family.
\end{remark}

\section{Adiabatic slope stability and the moment map flow}
\label{sec:findimsoln}
To construct constant scalar curvature metrics on the total space we use the flow associated with the sequence of moment maps of Definition \ref{Def:momentmap_muk}, following \cite[\S4.2]{DervanMcCarthySektnan}, which builds on \cite{GeorgoulasRobbinSalamon_GITbook,ChenSun_CalabiFlow}.
Let $\m{E} \to B$ be a semistable vector bundle, let $v \in \m{V}$ be the corresponding point in the Kuranishi space and let us denote by $(\omega_{k,v}, J_{k,v})$ the K\"ahler metric on $\bb{P}(\m{E}_v) \to B$ obtained from Proposition \ref{prop:findimreduction}.
Let $\mu_{k}$ be the associated moment on $\m{V}$ of Definition \ref{Def:momentmap_muk}. For each $k$, the moment map flow associated to the moment map $\mu_k$ is given by 
\[
\begin{aligned}
&\frac{\dd}{\dd t} v_t = J \eta({\mu_{k}(v_t)})\\
&v_{0} = v,
\end{aligned}
\]
where $\eta({\mu_{k}(v_t)})$ is the infinitesimal vector field induced by $\mu_k(v_t)$.
This is the gradient flow of the norm squared of the moment map \cite[Chapter 3]{GeorgoulasRobbinSalamon_GITbook}.
Up to shrinking $\m{V}$, since the origin of $\m{V}$ is in the closure of the orbit of $v$, for each $k$ the moment map flow starting at a point $v$ converges to a unique point $v_{k,\infty}$, which lies in $\m{V}$ \cite[\S4.2]{DervanMcCarthySektnan}.
Moreover, since $v$ represents an integrable complex structure and integrability is a closed condition, the limit point is also integrable.

We need the following characterisation \cite[Corollary 4.14]{DervanMcCarthySektnan}.
\begin{proposition}\label{Prop:zero_in_orbit}
Let $\mu_k$ be a sequence of moment maps on $\m{V}$ with respect to a sequence of K\"ahler forms $\omega_k$. Let $x \in \m{V}$ with finite stabiliser such that $x \in \overline{K^{\bb{C}}\cdot 0}$ and let $x_{k,\infty}$ be the limit of the moment map flow starting at $x$. Then either one of the following happens:
\begin{enumerate}
\item\label{Item:inorbit} $\mu_k(x_{k,\infty}) = 0$ and $x_{k,\infty} \in K^{\bb{C}}\cdot x$; 
\item\label{Item:notinorbit} $x_{k,\infty} \notin K^{\bb{C}}\cdot x$ and there exists an element $\eta_k \in \mf{k}$ and a point $\widetilde{x}_k$ such that
\[
\lim_{t \to \infty} \exp(-it\eta_k) \cdot x = \widetilde{x}_k
\]
and $\langle \mu_k (\widetilde{x}_k), \eta_k \rangle\ge 0$.
\end{enumerate}
\end{proposition}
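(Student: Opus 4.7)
Since Proposition~\ref{Prop:zero_in_orbit} is a general statement about gradient flows of moment map norms on finite-dimensional K\"ahler manifolds, essentially a restatement of \cite[Corollary~4.14]{DervanMcCarthySektnan}, the plan is to adapt the analysis of such flows developed in \cite{GeorgoulasRobbinSalamon_GITbook,ChenSun_CalabiFlow,DervanMcCarthySektnan}. The starting observation is that the moment map flow $\dot{v}_t = J\eta(\mu_k(v_t))$ is the downward gradient flow of $f_k := \tfrac{1}{2}\|\mu_k\|^2$ on $\mathcal{V}$, so its limit $x_{k,\infty}$ is a critical point of $f_k$. A short computation using the moment map condition shows that the vector field $\eta(\mu_k(x_{k,\infty}))$ vanishes at $x_{k,\infty}$; equivalently, $\mu_k(x_{k,\infty}) \in \mathfrak{k}$ fixes $x_{k,\infty}$.

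If $\mu_k(x_{k,\infty}) = 0$, alternative~(\ref{Item:inorbit}) asserts $x_{k,\infty} \in K^{\mathbb{C}} \cdot x$. Since every trajectory of the flow lies in the orbit $K^{\mathbb{C}} \cdot x$, its limit $x_{k,\infty}$ lies in $\overline{K^{\mathbb{C}} \cdot x}$; a finite-dimensional Kempf--Ness type argument (using that $x$ has finite stabiliser) then implies that a zero of the moment map inside this closure must in fact lie in the open orbit $K^{\mathbb{C}} \cdot x$, giving~(\ref{Item:inorbit}).

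Otherwise, set $\eta_k := \mu_k(x_{k,\infty}) \in \mathfrak{k}$. A Lojasiewicz-type argument comparing the moment map trajectory with the one-parameter subgroup flow of $\eta_k$ in a neighbourhood of $x_{k,\infty}$, as in \cite[\S4.2]{DervanMcCarthySektnan}, shows that $t \mapsto \exp(-it\eta_k) \cdot x$ converges in $\mathcal{V}$ to a point $\widetilde{x}_k$ fixed by $\eta_k$. The weight bound $\langle \mu_k(\widetilde{x}_k), \eta_k \rangle \ge 0$ then follows from the standard monotonicity of $t \mapsto \langle \mu_k(\exp(-it\eta_k) \cdot x), \eta_k \rangle$ along one-parameter subgroup orbits, whose time derivative is non-negative as it equals the squared norm of the infinitesimal action of $\eta_k$ at $\exp(-it\eta_k) \cdot x$ by the moment map identity, combined with the identification of the limit value as $\|\mu_k(x_{k,\infty})\|^2$ provided by the Lojasiewicz comparison.

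The main technical obstacle is the Lojasiewicz-type convergence argument in case~(\ref{Item:notinorbit}): establishing simultaneously that $\exp(-it\eta_k) \cdot x$ converges to a point $\widetilde{x}_k \in \mathcal{V}$ (in particular that no trajectory escapes the neighbourhood on which the Kuranishi family is defined) and that this limit matches sufficiently closely the moment map flow limit $x_{k,\infty}$ to identify the pairing $\langle \mu_k(\widetilde{x}_k), \eta_k\rangle$ with the non-negative quantity $\|\mu_k(x_{k,\infty})\|^2$. Once this identification is in place, together with the standard monotonicity of moment map pairings along one-parameter subgroups, the remainder of the argument is purely formal.
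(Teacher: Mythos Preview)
The paper does not prove this proposition at all: it is simply quoted as \cite[Corollary~4.14]{DervanMcCarthySektnan} and then used as a black box in the proof of Theorem~\ref{Thm:stability}. So there is no ``paper's own proof'' to compare against; your sketch is effectively an outline of the argument in the cited reference, and at that level it is broadly correct and identifies the right ingredients (gradient flow interpretation, critical point structure, Lojasiewicz-type comparison with the one-parameter subgroup generated by $\mu_k(x_{k,\infty})$).

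That said, your case division has a genuine gap. You split according to whether $\mu_k(x_{k,\infty})=0$ and then claim, in the first case, that finite stabiliser of $x$ forces the zero of the moment map in $\overline{K^{\mathbb{C}}\cdot x}$ to lie in $K^{\mathbb{C}}\cdot x$. This is false in general: take $\mathbb{C}^*$ acting linearly on $\mathbb{C}$ with $\mu(z)=\tfrac{1}{2}|z|^2$; the flow from $z=1$ (which has trivial stabiliser) converges to $0$, where $\mu=0$, yet $0\notin\mathbb{C}^*\cdot 1$. In this situation alternative~(\ref{Item:notinorbit}), not~(\ref{Item:inorbit}), holds, with weight exactly zero. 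The correct dichotomy is on whether $x_{k,\infty}\in K^{\mathbb{C}}\cdot x$: if it is, the finite-stabiliser hypothesis forces $\mu_k(x_{k,\infty})=0$ (since a critical point of $\|\mu_k\|^2$ is fixed by $\mu_k$ of that point, and nothing nonzero in $\mathfrak{k}$ fixes a point with finite stabiliser), giving~(\ref{Item:inorbit}); if it is not, one must produce the destabilising $\eta_k$, and your choice $\eta_k=\mu_k(x_{k,\infty})$ only works when this is nonzero. The boundary case $\mu_k(x_{k,\infty})=0$ with $x_{k,\infty}\notin K^{\mathbb{C}}\cdot x$ requires a separate argument to exhibit a nontrivial $\eta_k$ degenerating $x$ out of its orbit with nonnegative weight; this is handled in \cite{DervanMcCarthySektnan} but is not covered by your sketch.
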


For each $k$, let $v_{k, \infty}$ be the limit point of the moment map flow starting at $v$. Let $\eta_{k, \infty}$ be the corresponding vector field in $\mf{k}$, i.e.\
\[
\lim_{t \to 0} \exp(-it\eta_{k,\infty})\cdot v = v_{k, \infty}.
\]
Let $\m{E}_{k,\infty}$ be the vector bundle with holomorphic structure $\delbar_{k,\infty}$ induced by the deformation $\widetilde\Phi(v_{k,\infty})$, where $\widetilde \Phi$ is the Kuranishi map \eqref{eq:Kuranishimap_vb}.
Then $\eta_{k,\infty}$ induces a test configuration for $(\bb{P}(\m{E}_v), \omega_k)$ with central fibre $\bb{P}(\m{E}_{\widetilde{v}_k})$, as explained in Remark \ref{Rmk:explicit_testconf}. We denote it by $\m{X}_{\eta_{k,\infty}}$.

We next describe how $\eta_{k,\infty}$ induces a filtration for $\m{E}_v \to B$ following \cite[\S 4.2.3]{DervanMcCarthySektnan}. For each $\lambda \in \bb{R}$, denoted by $\sigma_{k,\infty} \in \mf{k}$ the endomorphism of $\m{E}_v \to B$ corresponding to $\eta_{k,\infty}$,
\[
\m{P}_\lambda = \ker(i \sigma_{k,\infty}-\lambda \mrm{Id}_{\m{E}_v})
\]
is a holomorphic subbundle of $\m{E}_{k, \infty}$.
In particular, we split $\sigma_{k, \infty} = \sum_{i=1}^d \lambda_i \sigma_{\lambda_i}$, where $\lambda_i$ are eigenvalues of $\sigma_{k, \infty}$ such that
\[
\lambda_1 > \ldots > \lambda_d.
\]
Note that $d \geq 2$, as otherwise $\m{E}_{k, \infty} \cong \m{E}_v$ and $\sigma_{k,\infty}$ is then the identity, since $\m{E}_v$ is simple. We can write $\m{E}_{k, \infty}$ as the direct sum of the $\m{P}_i = \m{P}_{\lambda_i}$, and there is at least two such components.
The $\m{P}_i$'s induce in turn a filtration of $\m{E}_v$ by holomorphic subbundles \cite[Corollary 4.18]{DervanMcCarthySektnan}
\[
0 \subset \m{S}_1 \subset \dots \subset \m{S}_d = \m{E}_v
\]
where $\m{S}_j = \oplus_{i\le j} \m{P}_{i}$.
Therefore, each sub-filtration $0 \subset \m{S}_j \subset \m{E}_v$ induces a test configuration $(\m{X}_j, \m{H}+ kL)$ as explained in Remark \ref{Rmk:explicit_testconf}, via the vector field $\eta_i = \eta_{\sigma_{\lambda_i}}$.

\begin{lemma}
\label{lem:DFexpansion}
The pairing of the moment map $\mu_{k}$ with the vector field $\eta_{k,\infty}$ is given by
\[
\langle\mu_k(v_{k,\infty}), \eta_{k,\infty} \rangle = - \frac{\mrm{DF}(\m{X}_{\eta_{k,\infty}}, \m{H}+ kL)}{(n+r)!\pi}
\]
where $\m{H} = \m{O}_{\bb{P}(\m{E}_v)}(1)$. Furthermore, 
\[
\begin{split}
\mrm{DF}(\m{X}_{\eta_{\infty,k}}, \m{H}+ kL) = (\lambda_{d-1}-\lambda_{d}) \mrm{DF}(\m{X}_d, \m{H}+ kL) +&(\lambda_{d-2}-\lambda_{d-1}) \mrm{DF}(\m{X}_{d-1}, \m{H}+ kL)+\\ & \dots + (\lambda_1-\lambda_{2}) \mrm{DF}(\m{X}_1, \m{H}+ kL).
\end{split}
\]
\end{lemma}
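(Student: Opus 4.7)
The proof breaks into two parts, matching the two displayed formulas in the statement.

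For the first identity, the plan is to unravel the Dervan--Hallam moment map formula and invoke the classical Futaki--Donaldson relationship. As in the proof of Lemma~\ref{lem:momentmapzero}, but applied to the perturbed K\"ahler structure of Definition~\ref{Def:momentmap_muk}, the pairing $\langle \mu_k(v_{k,\infty}), \eta_{k,\infty} \rangle$ unfolds to an integral over the fibre $\m{U}_{v_{k,\infty}} = \bb{P}(\m{E}_{v_{k,\infty}})$ of the Hamiltonian $h_{\eta_{k,\infty}}^{\phi_k}$ against $\mrm{Scal}(\omega_k + \ddb\phi_{k,v_{k,\infty}}, J_{k,v_{k,\infty}}) - \widehat{S}_k$, with respect to the volume form of the perturbed K\"ahler metric. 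Since $\bb{P}(\m{E}_{v_{k,\infty}})$ is by construction the central fibre of $\m{X}_{\eta_{k,\infty}}$ and the restriction of $h_{\eta_{k,\infty}}^{\phi_k}$ to this fibre is exactly the Hamiltonian for the generator of the induced $\bb{C}^*$-action, the integral coincides with the classical Futaki invariant of $J\eta_{k,\infty}$. Theorem~\ref{Thm:classical_Futaki_equal_DF} then converts this to the Donaldson--Futaki invariant, with the factor $(n+r)!\pi$ coming from $\dim \bb{P}(\m{E}_{v_{k,\infty}}) = n+r$ and the sign reflecting the convention noted in the remark following Theorem~\ref{thm:dervanhallam}.

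For the second identity, the plan is to exploit additivity of the Donaldson--Futaki invariant in the generating endomorphism. Using $\Pi_{\m{S}_j} = \sum_{i \leq j} \Pi_{\m{P}_i}$, Abel summation gives
\[
i\sigma_{k,\infty} = \sum_{i=1}^{d} \lambda_i \Pi_{\m{P}_i} = \sum_{j=1}^{d-1}(\lambda_j - \lambda_{j+1})\,\Pi_{\m{S}_j} + \lambda_d\, \mrm{Id}_{\m{E}_{k,\infty}}.
\]
By Remark~\ref{Rmk:explicit_testconf}, each $-i\Pi_{\m{S}_j}$ is precisely the element of $\mf{k}$ that generates the test configuration $\m{X}_j$, while the $\mrm{Id}$ component acts trivially on the projectivisation and has no effect on the Donaldson--Futaki invariant. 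Writing $\mrm{DF} = (a_1 b_0 - a_0 b_1)/a_0^2$ in terms of Hilbert and weight polynomials on the central fibre, the $\lambda_d \mrm{Id}$ shift contributes $\lambda_d a_0$ to $b_0$ and $\lambda_d a_1$ to $b_1$, and these cancel in the formula; the remaining decomposition then transfers directly to the Donaldson--Futaki invariants, since the $b_i$ are linear in the generating endomorphism and the $a_i$ are common to all the test configurations involved (all central fibres being flat degenerations of $\bb{P}(\m{E}_v)$ polarised by $\m{H}+kL$, hence sharing the same Hilbert polynomial). Combined with the first part of the lemma, this produces the asserted linear combination.

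The main technical point I expect to require care is the justification that the linearity of $\mrm{DF}$ in the generator genuinely holds across the test configurations $\m{X}_j$, whose central fibres are a priori only partially graded and distinct from one another. The cleanest route is to work at the level of weight polynomials on $H^0(X_0, j\cdot (\m{H}+kL)|_{X_0})$, where additivity of the $\bb{C}^*$-weight in the generator is immediate and the coefficients $a_0, a_1$ are common; alternatively, one can appeal to Theorem~\ref{Thm:classical_Futaki_equal_DF} to interpret each $\mrm{DF}(\m{X}_j)$ as a classical Futaki invariant on its respective smooth central fibre (guaranteed smooth by the sufficiently smooth hypothesis) and then pass to the fully graded central fibre $\bb{P}(\m{E}_{k,\infty})$, on which the linearity of the Futaki invariant in the vector field is manifest through the linearity of the Hamiltonian map $m$ of Lemma~\ref{Lemma:CinftyE_correspondence}.
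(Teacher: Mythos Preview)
Your proposal is correct and essentially matches the paper's argument. For the first identity both you and the paper invoke Theorem~\ref{Thm:classical_Futaki_equal_DF} to pass between the moment-map pairing, the classical Futaki invariant on the central fibre, and $\mrm{DF}$; for the second, both perform the Abel summation $\sigma_{k,\infty}=\lambda_d\,\Id_{\cE_v}+\sum_{j=1}^{d-1}(\lambda_j-\lambda_{j+1})\,\Id_{S_j}$ and drop the $\Id_{\cE_v}$ term as it acts trivially on the projectivisation.

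The only difference is one of route for the linearity step: the paper goes directly through what you list as your \emph{alternative}, namely linearity of the classical Futaki invariant in the vector field on the fixed smooth central fibre $\bb{P}(\cE_{k,\infty})$, and then converts each $\Fut(\eta_j)$ back to $\mrm{DF}(\m{X}_j,\m{H}+kL)$ via Theorem~\ref{Thm:classical_Futaki_equal_DF}. Your primary route via weight polynomials is also sound, but as you correctly flag it needs the extra observation that the Futaki invariant is unchanged when one further degenerates the partially graded central fibre of $\m{X}_j$ to the fully graded $\bb{P}(\cE_{k,\infty})$; the paper's route sidesteps this by working on a single central fibre throughout.
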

\begin{proof}
Since the test configurations are smooth, from Theorem \ref{Thm:classical_Futaki_equal_DF} the Donaldson-Futaki invariant is equal to the negative of the classical Futaki invariant of the central fibre, computed on the vector field inducing the test configuration:
\[
-\frac{\mrm{DF}(\m{X}_{\eta_{\infty,k}}, \m{H}+ kL)}{(n+r)!\pi} = \mrm{Fut}(\eta_{k,\infty}) = \langle\mu_k(v_{k,\infty}), \eta_{k,\infty} \rangle.
\]
Moreover, since $\Id_{P_i} = \Id_{S_i} - \Id_{S_{i-1}}$, we can write 
\[
\sigma_{k,\infty} = \sum_{i=1}^d \lambda_i \mrm{Id}_{P_{i}} = \lambda_d \Id_{S_d} + \sum_{i=1}^{d-1} (\lambda_i-\lambda_{i+1}) \mrm{Id}_{S_i},
\]
so in particular, since $\m{S}_d = \cE_v$ and $\eta_{d} = \eta_{\Id_{\cE_v}} = 0$ as the identity on $\dbar_v$ acts trivially,
\[
\eta_{k,\infty} =  \sum_{i=1}^{d-1} (\lambda_i-\lambda_{i+1}) \eta_{i}.
\]
Therefore, 
\begin{align*}
\frac{\mrm{DF}(\m{X}_{\m{F}}, \m{H}+ kL)}{(n+r)! } =& -\pi \Fut(\eta_{k,\infty}) \\
=&-\pi \sum_{i=1}^{d-1} (\lambda_i-\lambda_{i+1}) \Fut(\eta_{\lambda_i} ) \\
=& \sum_{i=1}^{d-1} (\lambda_i-\lambda_{i+1}) \frac{\mrm{DF}(\m{X}_i, \m{H}+ kL)}{(n+r)! },
\end{align*}
where each coefficient $\lambda_i -\lambda_{i+1}$ is positive.
\end{proof}

\begin{theorem}\label{Thm:stability}
Let $(\bb{P}(\m{E}_v), \omega_k) \to B$ be adiabatically slope stable. Then there is $v_{k,\infty} \in \m{V}$ in the $K^{\bb{C}}$-orbit of $v$ such that
\[
\mrm{Scal}(\omega_k,J_{v_{k,\infty}}) = constant
\]
for every $k \gg 0$.
\end{theorem}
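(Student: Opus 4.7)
The plan is to apply the moment map flow for the sequence $\mu_k$ starting at $v$, and use adiabatic slope stability to rule out the non-convergent branch of the dichotomy in Proposition \ref{Prop:zero_in_orbit}. Since $\cV$ is the Kuranishi space about the polystable graded object $\cE_0$, the origin lies in $\overline{K^{\bb{C}} \cdot v}$, and therefore for each $k \gg 0$ the moment map flow starting at $v$ converges to a limit $v_{k,\infty} \in \cV$ as recalled at the beginning of the section. Applying Proposition \ref{Prop:zero_in_orbit} with $x = v$ gives two alternatives: either (a) $v_{k,\infty}$ is a zero of $\mu_k$ lying in the $K^{\bb{C}}$-orbit of $v$; or (b) there exists $\eta_{k,\infty} \in \mfk$ with $\exp(-it\eta_{k,\infty}) \cdot v \to v_{k,\infty}$ and $\langle \mu_k(v_{k,\infty}), \eta_{k,\infty}\rangle \geq 0$. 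In case (a), since $v_{k,\infty} \in K^{\bb{C}} \cdot v$ the complex manifold $\P(\cE_{v_{k,\infty}})$ is biholomorphic to $\P(\cE_v)$, and Lemma \ref{Lemma:injective_projection} yields that the K\"ahler metric $\omega_k + \ddb \phi_k$ has constant scalar curvature on this fibre, which is what we want to prove.

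The proof therefore reduces to ruling out (b). Suppose (b) holds, and let $\sigma_{k,\infty} = m^{-1}(\eta_{k,\infty}) \in \mfk \subset \Gamma(\End \cE)$ with eigenvalues $\lambda_1 > \dots > \lambda_d$ and associated eigenspace decomposition $\cE_{k, \infty} = \bigoplus_i \m{P}_{\lambda_i}$, giving the filtration $\m{S}_i = \bigoplus_{j \leq i} \m{P}_{\lambda_j} \subset \cE_v = \cE$ described before Lemma \ref{lem:DFexpansion}. By Lemma \ref{lem:DFexpansion}, the non-negative weight can be rewritten as
\[
\langle \mu_k(v_{k,\infty}), \eta_{k,\infty} \rangle = -\frac{1}{(n+r)!\, \pi} \sum_{i=1}^{d-1} (\lambda_i - \lambda_{i+1})\, \mrm{DF}(\m{X}_i, \m{H}+kL),
\]
where each coefficient $\lambda_i - \lambda_{i+1}$ is strictly positive. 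If we knew that each $\mrm{DF}(\m{X}_i, \m{H}+kL) > 0$, the right-hand side would be strictly negative, producing a contradiction with (b). Since each $\m{S}_i$ is a holomorphic subbundle of $\cE$, and $\cE$ is assumed adiabatically slope stable with respect to $L$, this positivity holds for $k$ sufficiently large depending on $\m{S}_i$.

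The main obstacle is making this final step uniform in $k$: a priori, the filtration $\{\m{S}_i\}$ produced by $\eta_{k,\infty}$ depends on $k$, whereas adiabatic slope stability only guarantees positivity of the Donaldson-Futaki invariant for $k$ large depending on the chosen subsheaf. We will handle this by exploiting that $\eta_{k,\infty}$ lives in the finite-dimensional Lie algebra $\mfk$. Since scaling $\eta_{k,\infty}$ by a positive constant does not change $v_{k,\infty}$ and scales the weight positively, we may normalise to $\|\eta_{k,\infty}\| = 1$; by compactness of the unit sphere in $\mfk$, a subsequence converges to a limit $\eta_\infty \in \mfk$, whose eigenspace decomposition of $\cE$ yields a fixed finite filtration $\{\m{S}_i^\infty\}$. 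Adiabatic slope stability applied to each of the finitely many subbundles $\m{S}_i^\infty$ then gives $\mrm{DF}(\m{X}_i^\infty, \m{H}+kL) > 0$ for $k \gg 0$. A continuity argument in $\eta$ for the filtration and the Donaldson-Futaki invariant (which is polynomial in $k$ of degree at most $n$) transfers this positivity to the nearby filtrations $\{\m{S}_i\}$ corresponding to large enough $k$, ruling out (b) and concluding the proof.
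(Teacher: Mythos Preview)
Your overall strategy coincides with the paper's: apply the dichotomy of Proposition~\ref{Prop:zero_in_orbit}, use Lemma~\ref{lem:DFexpansion} to rewrite the weight as a positive combination of Donaldson--Futaki invariants of the test configurations $(\cX_i,\cH+kL)$, and invoke adiabatic slope stability to force these to be positive, contradicting branch~(b). The paper's proof is essentially identical up to this point and then concludes via Lemma~\ref{Lemma:injective_projection}, as you do.

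You go further than the paper in one respect: you explicitly flag the uniformity issue, namely that the filtration $\{\m{S}_i\}$ produced by $\eta_{k,\infty}$ depends on $k$, while adiabatic slope stability only gives $\mrm{DF}(\cX_{\m{S}},\cH+kL)>0$ for $k>k_0(\m{S})$. The paper does not spell this out. However, your proposed resolution via compactness in $\mfk$ and a ``continuity argument for the filtration'' is not sound as written. Eigenspace decompositions do \emph{not} vary continuously in $\eta$: when distinct eigenvalues of $\eta_{k,\infty}$ merge in the limit $\eta_\infty$, the limiting filtration is strictly coarser than the filtrations along the sequence, and the intermediate subbundles that appear for large $k$ are invisible at $\eta_\infty$. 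So positivity of $\mrm{DF}(\cX_i^\infty,\cH+kL)$ says nothing about those intermediate pieces, and the gaps $\lambda_i-\lambda_{i+1}$ weighting them may also tend to zero.

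The clean way to close the gap is finiteness rather than continuity. Since $\cE_0=\bigoplus_j \m{Q}_j^{\oplus n_j}$ with the $\m{Q}_j$ pairwise non-isomorphic stable, one has $\mfk\cong\bigoplus_j\mf{u}(n_j)$, and every eigenbundle of any $\sigma\in\mfk$ is isomorphic to $\bigoplus_j \m{Q}_j^{\oplus d_j}$ for some $0\le d_j\le n_j$. Hence each $\m{S}_i$ arising from the flow has one of finitely many smooth isomorphism types, and its Chern classes---and therefore $\mrm{DF}(\cX_{\m{S}_i},\cH+kL)$ as a polynomial in $k$---take only finitely many values. Taking $k_0$ to be the maximum of $k_0(\m{S})$ over this finite list gives the required uniformity, and the contradiction with branch~(b) goes through for every $k\ge k_0$.
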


\begin{proof}
Assume that condition \eqref{Item:notinorbit} of Proposition \ref{Prop:zero_in_orbit} occurs. 
We can therefore find $\widetilde{v}_k \in \cV$ and $\eta_k \in \mf{k}$ such that $\lim_{t \to \infty} \exp(-i t \eta_k) \cdot v =  \widetilde{v}_k$ and
\[
\langle \mu_{k,v} (\widetilde{v}_k), \eta_k\rangle\ge 0.
\]
Then $\eta_{k}$ induces a test configuration for $(\bb{P}(\m{E}_v), \omega_k)$ with central fibre $\bb{P}(\m{E}_{\widetilde{v}_k})$.
By Lemma \ref{lem:DFexpansion}, the pairing
\[
\langle \mu_{k,v}(\widetilde{v}_k), \eta_k\rangle
\]
is equal to the negative of the Donaldson-Futaki invariant of said test configuration. By assumption $\bb{P}(\m{E}_v) \to B$ is adiabatically slope stable, so using Lemma \ref{lem:DFexpansion} again, we see that 
\[
\langle \mu_{k,v}(\widetilde{v}_k), \eta_k \rangle < 0
\]
unless the test configuration induced by $\eta_k$ is a product test configuration. But $\widetilde{v}_k$ is not in the orbit of $v$, and since $\widetilde{v}_k$ has non-trivial stabiliser, but $v$ corresponds to a simple bundle, $J_{\widetilde{v}_k}$ is not biholomorphic to $J_v$. So the test configuration is not a product test configuration.

Hence from Proposition \ref{Prop:zero_in_orbit} we obtain that the condition \eqref{Item:inorbit} must hold, so there is a $v_{k,\infty} \in K^{\bb{C}}\cdot v$ such that $\mu_{k,v}(v_{k,\infty}) = 0$ and $\Phi(v_{k,\infty})$ is an integrable complex structure.
From Lemma \ref{Lemma:injective_projection}, $v_{k, \infty}$ is such that the scalar curvature of the K\"ahler metric on $\bb{P}(\m{E}_{v_{k, \infty}})$ is constant.
\end{proof}

We deduce the following corollary.
\begin{corollary}
With the same assumptions on $\m{E}$ and $B$, suppose $\bb{P}(\m{E}) \to B$ is K-stable with respect to $H+kL$ for all sufficiently large $k$. Then $\bb{P}(\m{E})$ admits a cscK metric in $c_1(H+kL)$ for all sufficiently large $k$. 
\end{corollary}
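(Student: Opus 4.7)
The plan is to derive this corollary directly from Theorem \ref{thm:main} by showing that K-stability of $(\bb{P}(\cE), H+kL)$ for all sufficiently large $k$ implies adiabatic slope stability of $\cE$ with respect to $L$. Since adiabatic slope stability is the positivity condition $\mrm{DF}(\cX_{\m{S}}, \cH+kL) > 0$ for a distinguished subclass of test configurations, namely those $(\cX_{\m{S}}, \cH+kL)$ arising from reflexive subsheaves $\m{S} \subset \cE$ as in \S\ref{sec:adiabaticstability}, this is \emph{a priori} a weaker condition than K-stability, and Theorem \ref{thm:main} will provide the existence of a cscK metric as soon as it is verified. So it suffices to show that each such test configuration satisfies $\mrm{DF}(\cX_{\m{S}}, \cH+kL) > 0$ for $k \gg 0$.

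First I would observe that the $(\cX_{\m{S}}, \cH+kL)$ constructed in \S\ref{sec:adiabaticstability} are genuine test configurations for $(\bb{P}(\cE), H+kL)$, with generic fibre $\bb{P}(\cE)$ and central fibre $\bb{P}(\m{S} \oplus \cE/\m{S})$. Under the K-stability hypothesis, the Donaldson--Futaki invariant is non-negative for each $k\gg 0$, and vanishes only if the test configuration normalises to the trivial one.

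The main step is then to rule out that $(\cX_{\m{S}}, \cH+kL)$ normalises to the trivial test configuration. Here I would use that $\cE$ is simple: were $\cE$ isomorphic to $\m{S} \oplus \cE/\m{S}$ as a holomorphic vector bundle, composing the splitting with the projection onto $\m{S}$ would yield a nontrivial idempotent in $\End(\cE)$, contradicting simplicity. Hence the central fibre $\bb{P}(\m{S} \oplus \cE/\m{S})$ is not isomorphic to the generic fibre $\bb{P}(\cE)$, and this property persists under normalisation since normalisation is a birational modification that preserves the generic and, on the fibre over $0$, refines rather than alters the underlying scheme-theoretic central fibre up to isomorphism class. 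Consequently $(\cX_{\m{S}}, \cH+kL)$ does not normalise to the trivial test configuration, K-stability forces $\mrm{DF}(\cX_{\m{S}}, \cH+kL) > 0$ for every $k \gg 0$, and this gives adiabatic slope stability. The corollary then follows from Theorem \ref{thm:main}. The only genuine subtlety is the normalisation step, which is rendered essentially automatic by the simplicity of $\cE$; without that hypothesis one would have to work harder to exclude degenerate test configurations.
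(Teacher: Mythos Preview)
Your proposal is correct and follows the same approach as the paper, which leaves this corollary without explicit proof as an immediate consequence of Theorem~\ref{Thm:stability} (equivalently Theorem~\ref{thm:main}): K-stability checks strictly more test configurations than adiabatic slope stability, so the implication is formal once one knows the relevant test configurations are not trivial. One small remark: your discussion of normalisation is more cautious than necessary here, since under the sufficiently smooth hypothesis the test configurations actually used in the proof of Theorem~\ref{Thm:stability} come from subbundles and therefore have smooth (hence already normal) total space; combined with simplicity of $\cE$, which as you note forces $\bb{P}(\cE)\not\cong\bb{P}(\m{S}\oplus\cE/\m{S})$, this disposes of the triviality issue without any further analysis of normalisation.
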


While our methods are analytical, we obtain the following algebro-geometric statement.
\begin{corollary}
Suppose $\cE$ is a sufficiently smooth adiabatically slope stable vector bundle over a base $B$ with discrete automorphism group admitting a cscK metric in $c_1(L)$. Then $(\P(\cE), H+kL)$ is K-stable for all sufficiently large $k$.
\end{corollary}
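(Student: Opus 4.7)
The strategy is to combine Theorem \ref{thm:main} with the Berman--Darvas--Lu theorem that a polarised manifold admitting a cscK metric is K-polystable, and then upgrade to K-stability using discreteness of the automorphism group. The main obstacle is the existence of the cscK metric itself, which is precisely the content of Theorem \ref{thm:main}, i.e.\ of the entire paper; once this is granted, the remaining arguments are standard.

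First, I apply Theorem \ref{thm:main} directly: under the hypotheses of the corollary (sufficiently smooth, adiabatically slope stable $\cE$ over a base $B$ with discrete $\Aut(B,L)$ admitting a cscK metric in $c_1(L)$, with $\cE$ simple as tacitly assumed throughout the paper), the projectivisation $\P(\cE)$ admits a cscK metric in $c_1(H+kL)$ for all sufficiently large $k$.

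Second, I verify that $\Aut(\P(\cE), H+kL)$ has trivial identity component for $k \gg 0$. Any holomorphic vector field $X$ on $\P(\cE)$ projects under $\pi_*$ to a holomorphic vector field on $B$, which must vanish since $\Aut(B,L)$ is discrete. Hence $X$ is vertical, and via the Euler sequence identification $T_{\P(\cE)/B}\big|_{\P(\cE_b)} \cong \End_0(\cE_b)$ it corresponds to a global section of $\End_0(\cE)$. Since $\cE$ is simple, $H^0(\End_0 \cE) = 0$, so $X=0$. This shows the real Lie algebra of $\Aut(\P(\cE), H+kL)$ is trivial, and the group is discrete.

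Third, I invoke the Berman--Darvas--Lu theorem: the existence of a cscK metric on $(\P(\cE), H+kL)$ implies that this polarised manifold is K-polystable, meaning the Donaldson--Futaki invariant vanishes only for test configurations normalising to a product. A non-trivial product test configuration, however, would be induced by a non-trivial $\bb{C}^*$-subgroup of $\Aut(\P(\cE), H+kL)$, which does not exist by the previous step. Hence every product test configuration is trivial, so K-polystability coincides with K-stability for each such $k$, concluding the proof.
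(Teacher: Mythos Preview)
Your argument is correct and is precisely the intended one: the paper does not spell out a proof of this corollary, but the implicit route is exactly to combine Theorem~\ref{thm:main} (adiabatic slope stability $\Rightarrow$ cscK) with the known forward direction of the Yau--Tian--Donaldson conjecture, together with the discreteness of $\Aut(\P(\cE), H+kL)$, which the paper has already established in Lemma~\ref{lem:automs}. One minor remark: since you verify discreteness of the automorphism group \emph{before} invoking the cscK $\Rightarrow$ K-stability step, Stoppa's earlier theorem already gives K-stability directly, so the full strength of Berman--Darvas--Lu (K-polystability) is not needed here.
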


We also have that for sufficiently smooth vector bundles, it suffices to check adiabatic slope stability on subbundles. 
\begin{corollary}
\label{cor:adiabaticsubbundles}
Suppose $\cE$ is a sufficiently smooth vector bundle over a base $B$ with a discrete automorphism group admitting a cscK metric in $c_1(L)$, which is adiabatically slope stable with respect to subbundles. Then $\cE$ is adiabatically slope stable.
\end{corollary}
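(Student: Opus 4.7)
The plan is to combine the cscK existence half of Theorem \ref{thm:main} (i.e.\ Theorem \ref{Thm:stability}) with its converse---that cscK implies K-stability---in order to bridge the gap between adiabatic slope stability against subbundles and against arbitrary reflexive subsheaves. The key observation at the outset is that the proof of Theorem \ref{Thm:stability} only invokes the stability hypothesis on the test configurations $(\m{X}_j, \m{H}+kL)$ coming from the filtration $0 \subset \m{S}_1 \subset \cdots \subset \m{S}_d = \cE_v$ produced by the limit of the moment map flow, and each $\m{S}_j$ is a holomorphic \emph{subbundle} of $\cE_v$ (cf.\ the construction preceding Theorem \ref{Thm:stability} and Remark \ref{Rmk:explicit_testconf}). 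Consequently, the hypothesis that $\cE$ is adiabatically slope stable with respect to subbundles is already enough to run the argument of Theorem \ref{Thm:stability} and conclude that $\P(\cE)$ admits a cscK metric in the class $c_1(H)+kc_1(L)$ for all $k \gg 0$.

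Given this cscK metric, I would next invoke the standard implication that the existence of a cscK metric in a polarised class, together with discreteness of the reduced automorphism group of the polarised variety, implies K-stability. Discreteness of $\Aut(\P(\cE), H+kL)$ follows from simplicity of $\cE$---which forces $H^0(\End \cE) = \C \cdot \Id$---combined with the hypothesis that $\Aut(B,L)$ is discrete. Hence $(\P(\cE), H+kL)$ is K-stable for all sufficiently large $k$.

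Finally, for any proper nonzero reflexive subsheaf $\m{S} \subset \cE$, the test configuration $\cX_{\m{S}}$ has central fibre $\P(\m{S}\oplus\cE/\m{S})$, which cannot be isomorphic to $\P(\cE)$, since simplicity forces $\cE$ to be indecomposable. In particular $\cX_{\m{S}}$ does not normalise to the trivial test configuration, so K-stability yields $\mrm{DF}(\cX_{\m{S}}, \cH+kL) > 0$ for all $k \gg 0$, which is precisely adiabatic slope stability. The main obstacle in this plan is the very first step: one must carefully verify that throughout the proof of Theorem \ref{Thm:stability} the subsheaves appearing in the stability hypothesis are genuinely subbundles of $\cE$, rather than more general reflexive subsheaves, which relies on the sufficiently smooth hypothesis together with the subbundle structure of the filtration produced by the moment map flow limit in $\mf{k}$.
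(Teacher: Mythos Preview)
Your proposal is correct and is precisely the implicit argument the paper has in mind: the only place the stability hypothesis enters in the proof of Theorem \ref{Thm:stability} is through the test configurations $(\cX_j,\cH+kL)$ induced by the filtration $0\subset\m{S}_1\subset\cdots\subset\m{S}_d=\cE_v$, and each $\m{S}_j$ is a holomorphic subbundle by \cite[Corollary 4.18]{DervanMcCarthySektnan}; so subbundle stability already yields cscK, after which the previous corollary (cscK $\Rightarrow$ K-stable, using simplicity of $\cE$ and discreteness of $\Aut(B,L)$ to kill automorphisms of $\P(\cE)$) closes the loop. The paper does not spell this out, but your chain of implications is exactly the intended one.
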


\section{Asymptotic expansion of the Donaldson-Futaki invariant}
\label{sec:example}
In Section \ref{sec:findimsoln} we have seen that to check adiabatic slope stability it is enough to check on the test configurations induced by subbundles.
In this section, we write an asymptotic adiabatic expansion of the Donaldson-Futaki invariant of said test configuration and we exhibit an example of a bundle satisfying our assumptions.

\subsection{Asymptotic expansion in general}

In this section, we will give a generating formula for the asymptotic expansion of the Donaldson-Futaki invariant of the test configuration $(\cX_{\m{S}}, \cH+kL)$ considered in \S\ref{sec:adiabaticstability} when $\m{S} \subset \cE$ is a subbundle. In order to obtain the formula, we will use Legendre's localisation formula for the Donaldson-Futaki invariant \cite{LegendreLocalisation21}.

First of all, note that $(\cX_{\m{S}}, \cH+kL)$ is obtained by flowing along the vector field generated by $m(\Id_{S})$. Indeed, if we let $\m{Q}=\frac{\cE}{\m{S}}$ and $\dbar_0$ be the holomorphic structure of $\m{S} \oplus \m{Q}$, then the holomorphic structure on $\cE$ is given by $\dbar = \dbar_0 + \gamma$, where $\gamma \in \Omega^{0,1}(Q^* \otimes S)$. It follows that 
$$
\exp(t \Id_{S}) \cdot \dbar = \dbar_0 + e^{-t} \gamma
$$
and so
\begin{align*}
\lim_{t \to \infty} \exp(t \Id_{S}) \dbar = \dbar_0.
\end{align*}
Thus $\cE$ degenerates to $\m{S} \oplus \m{Q}$ via $\Id_{S}$ and therefore the projectivisation $\mathbb{P}(\cE)$ degenerates to $\mathbb{P}(\m{S} \oplus \m{Q})$ via the vector field generated by $m(\Id_{S})$.

By Theorem \ref{Thm:classical_Futaki_equal_DF}, we can therefore compute the Donaldson-Futaki invariant of $(\cX_{\m{S}}, \cH+kL)$ via the Futaki invariant on the central fibre, which is $\mathbb{P}(\m{S} \oplus \m{Q})$. This is where we apply the localisation formula of Legendre, which we now describe. 

Let $(Y, \omega_Y)$ be a compact K\"ahler manifold of dimension $m$.
Suppose $f$ is a Hamiltonian for a $S^1$-action generated by a holomorphic vector field $\nu \in \mf{k}$. Let $Z_1, \ldots, Z_l$ be the connected components of the fixed point set of this $S^1$-action: on each such component, the restriction $f_i = f\big|_{Z_i}$ is a constant.

Let $N_i$ be the normal bundle of $Z_i$ in $Y$, for each $i$; then $\omega_Y$ induces a hermitian metric on $N_i$, with curvature $F_i$.

Let $\Delta = \dd^*\dd$ be the Laplacian on functions on $Y$, so that on each $Z_i$
\begin{equation}\label{Eq:EvelinesLaplacian_weights}
\Delta f = -2 \lambda_i f
\end{equation}
where the constant $\lambda_i$ on $Z_i$ is the weight of the action of $K$ on the normal bundle $N_i$.

\begin{definition}
The equivariant Euler class of $N_i$ is a $\mrm{rk} N_i$-equivariant differential form on $Y$.
Its Chern-Weil representative at $\nu$ is
\[
e_{eq}(N_i)_\nu = P_{\mrm{rk}N_i} (F_i + f),
\]
where $P_{\mrm{rk}N_i}$ is the $\mrm{rk}N_i$-th homogeneous component of $\det (\mrm{id} + F_i)$.
\end{definition}
Thought of as represented by an element in the cohomology ring of $Z$, the equivariant Euler class has a non-zero degree $0$ component, and can thus be inverted -- when we use division notation below, we mean multiplication by this inverse.

The form in which we will use the localisation formula is given by the following.
\begin{proposition}[{\cite[Equation (9)]{LegendreLocalisation21}}]
\label{thm:FutLoc}
With the notation above, 
$$
\begin{multlined}
\Fut(\nu) =  \sum_{i=1}^l \frac{m}{(m+1)!} \widehat S_Y\left\langle\frac{([\omega_Y]-f_1)^{m+1}}{e_{eq}(N_i)}, [Z_i] \right\rangle -\sum_{i=1}^l \frac{1}{m!} \left\langle \frac{\left(c_1(Y) + \lambda_if_i \right).([\omega_Y] - f_{i})^m}{e_{eq}(N_i)} , [Z_i] \right\rangle
\end{multlined}
$$
where $\widehat S_Y$ is the average scalar curvature and with the brakets we denote the intersection product computed on the class of the subvariety $Z_i$.
\end{proposition}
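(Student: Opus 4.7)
The plan is to recast the classical Futaki invariant as a sum of two integrals of equivariantly closed differential forms in Cartan's model of equivariant cohomology (with differential $d_\nu = d - \iota_\nu$), and then to apply the Atiyah--Bott--Berline--Vergne (ABBV) localisation theorem to each. Two equivariantly closed forms drive the argument: the equivariant symplectic form $\omega_Y - f$, whose $d_\nu$-closedness is immediate from $\iota_\nu \omega_Y = -df$; and an equivariant representative $\rho + h$ of $c_1(Y)$, where $\rho$ is the (suitably normalised) Ricci form and $h \in C^\infty(Y)$ is determined by $dh = \iota_\nu \rho$. The standard Bochner-type identity $\iota_\nu \rho = -\tfrac{1}{2}d(\Delta f)$, valid for Hamiltonian holomorphic Killing fields on a K\"ahler manifold, pins down $h$ uniquely up to a constant as $h = -\tfrac{1}{2}\Delta f$; restricted to a fixed component $Z_i$, this becomes $h|_{Z_i} = -\tfrac{1}{2}\Delta f|_{Z_i} = \lambda_i f_i$ by the defining relation \eqref{Eq:EvelinesLaplacian_weights}. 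Consequently $(\rho + h)|_{Z_i} = c_1(Y)|_{Z_i} + \lambda_i f_i$, which is precisely the combination appearing in Proposition~\ref{thm:FutLoc}.

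Once these representatives are in hand, the rewriting step is elementary. Using the K\"ahler identity $\Scal(\omega_Y)\,\omega_Y^m = 2m\,\rho \wedge \omega_Y^{m-1}$ together with the binomial identities
\[
\int_Y f\,\rho \wedge \omega_Y^{m-1} = -\tfrac{1}{m}\int_Y (\rho + h)\wedge (\omega_Y - f)^m, \qquad \int_Y f\,\omega_Y^m = -\tfrac{1}{m+1}\int_Y (\omega_Y - f)^{m+1},
\]
which follow by extracting the top-degree pieces from the binomial expansions of $(\omega_Y - f)^m$ and $(\omega_Y - f)^{m+1}$ and using $\int_Y h\,\omega_Y^m = -\tfrac{1}{2}\int_Y (\Delta f)\,\omega_Y^m = 0$, one expresses $\Fut(\nu) = \int_Y f\,(\Scal(\omega_Y) - \widehat{S}_Y)\,\omega_Y^m/m!$ as an explicit linear combination (with coefficients depending on the chosen normalisation of the Ricci form) of the equivariant integrals $\int_Y (\rho + h)\wedge(\omega_Y - f)^m$ and $\int_Y (\omega_Y - f)^{m+1}$. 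Both integrands are $d_\nu$-closed by construction.

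The final step is to apply ABBV localisation $\int_Y \alpha = \sum_i \int_{Z_i} \alpha|_{Z_i}/e_{eq}(N_i)$ to each of these equivariant integrals, substituting the restrictions $(\rho + h)|_{Z_i} = c_1(Y)|_{Z_i} + \lambda_i f_i$ and $(\omega_Y - f)|_{Z_i} = [\omega_Y]|_{Z_i} - f_i$, and recognising $e_{eq}(N_i) = P_{\rk N_i}(F_i + f)$ in the denominator. Collecting coefficients then reproduces the two sums in the statement. The main obstacle along this route is the construction and identification of the equivariantly closed representative $\rho + h$ of $c_1(Y)$, and in particular the verification that $h|_{Z_i} = \lambda_i f_i$; once this identification is secured, the binomial bookkeeping and the ABBV localisation step are both mechanical.
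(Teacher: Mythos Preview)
The paper does not prove this proposition at all: it is quoted directly from Legendre \cite[Equation (9)]{LegendreLocalisation21} and used as a black box in the subsequent computations. Your sketch is the standard derivation, and it is essentially the argument Legendre gives in her paper: rewrite $\Fut(\nu)$ as a combination of the two equivariant integrals $\int_Y(\omega_Y-f)^{m+1}$ and $\int_Y(\rho+h)\wedge(\omega_Y-f)^m$ via the binomial identities you state, then apply Atiyah--Bott--Berline--Vergne localisation and use the identification $h|_{Z_i}=\lambda_i f_i$ coming from the Bochner formula and \eqref{Eq:EvelinesLaplacian_weights}.

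One small caution on bookkeeping: the factor of $2$ in your K\"ahler identity $\Scal(\omega_Y)\,\omega_Y^m = 2m\,\rho\wedge\omega_Y^{m-1}$ depends on the normalisation of the Ricci form, and with the paper's convention $\Scal=\Lambda_\omega\Ric$ one has $\Scal\,\omega_Y^m = m\,\Ric\wedge\omega_Y^{m-1}$ without the $2$. Similarly, the precise constants $\tfrac{m}{(m+1)!}$ and $\tfrac{1}{m!}$ in the statement reflect a volume-form normalisation of the Futaki invariant (dividing by $m!$) that you should match when collecting coefficients at the end. These are purely cosmetic issues; the structure of your argument is correct.
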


We compute the localisation formula for the action generated by $f= m(\Id_{S})$ with $Y=\mathbb{P}(\m{S}\oplus \m{Q})$ and $\omega_Y = \omega_k=\omega + k \omega_B$.
With this choice of $f$, the fixed point set is $Z_1 \cup Z_2$, where $Z_1 = \mathbb{P}(\m{S})$ and $Z_2 = \mathbb{P}(\m{Q}).$ Moreover
$$
m(\Id_{S}) =
\begin{cases}
-1 & \textnormal{on } \mathbb{P}(\m{S})\\
0 & \textnormal{on } \mathbb{P}(\m{Q})
\end{cases}
$$

We start by multiplying the Futaki invariant by the volume of $\omega_k$ to obtain a polynomial in $k$., This does not change the sign.
Since $f$ vanishes on $\bb{P}(\m{Q})$, the only addendum in the Futaki invariant is the one evaluated on $\bb{P}(\m{S})$. Therefore, the quantity we compute is
\[
\begin{multlined}
[\omega_k]^{n+r}\Fut(\nu) =  \frac{n+r}{(n+r+1)!} \left\langle c_1(X) \cdot [\omega_k]^{n+r-1}, X \right\rangle\left\langle\frac{([\omega_k]+1)^{n+r+1}}{e_{eq}(N_{\m{S}})}, [\bb{P}(\m{S})] \right\rangle +\\
 -\frac{1}{(n+r)!}\langle[\omega_k]^{n+r}, X \rangle \left\langle \frac{\left(c_1(X) - \lambda_{\m{S}} \right).([\omega_k] + 1)^{n+r}}{e_{eq}(N_{\m{S}})} , [\bb{P}(\m{S})] \right\rangle.
\end{multlined}
\]
Here we continue using the bracket notation to denote the object on which we compute the intersection product.
We denote
\[
q = \rk\m{Q}, \quad s+1 = \rk\m{S}.
\]

We first compute the constant $\lambda_{\m{S}}$.
The eigenfunctions of the Laplacian with respect to the metric induced by the canonical polarisation $O_{\bb{P}^{r}}(r+1)$ with eigenvalue $2$ \cite[Lemma 3.16]{dervansektnan21a} are induced by a traceless element of $\mf{k}$, such as the function
\[
m\left(\mrm{Id}_S-\frac{\rk \m{S}}{\rk \m{E}}\mrm{Id}_{\m{E}}\right).
\]
However, we consider the polarisation $O_{\bb{P}^{r}}(1)$ instead of the canonical polarisation, and
\[
c_1(O(r+1)) = (r+1)c_1(O(1)).
\]
Therefore from equation \eqref{Eq:EvelinesLaplacian_weights} it follows that on $\bb{P}(\m{S})$,
\[
\begin{aligned}
\Delta (m(\Id_S)) =& \Delta \left( m\left(\mrm{Id}_S-\frac{\rk \m{S}}{\rk \m{E}}\mrm{Id}_{\m{E}} \right)\right)\\
=& -2 (r+1) m\left(\mrm{Id}_S-\frac{\rk \m{S}}{\rk \m{E}}\mrm{Id}_{\m{E}} \right) \\
=&-2 (r+1) \left( \left( 1-\frac{\rk \m{S}}{\rk \m{E}}\right) m(\Id_S) + \frac{\rk S}{\rk \m{E}} m (\Id_Q)\right)\\
=& -2 (r+1)\left( 1-\frac{\rk \m{S}}{\rk \m{E}}\right) m(\Id_S),
\end{aligned}
\]
where we have used that $m(\Id_Q)$ vanishes on $\bb{P}(\m{S})$. Hence
\[
\lambda_S =-\rk \m{Q}.
\] 

Next, to apply the formula we need to compute the equivariant Euler class of the normal bundle. The normal bundle of $\mathbb{P}(\m{S})$ in $\mathbb{P}(\m{S} \oplus \m{Q})$ is given by 
$$
N_{\m{S}} = H \otimes \pi^*\left(\m{Q} \right),
$$
see \cite[Proposition 9.13]{3264}.
By the equivariant splitting principle \cite[Theorem 8.6.2]{GuilleminSternberg99}, we can write $\pi^*\left(\m{Q} \right)$ as sum of line bundles
\[
\pi^*\left(\m{Q} \right) = \bigoplus_{i=1}^q \m{L}_i.
\]
The hermitian metric on $\m{S} \oplus \m{Q}$ induces a Chern connection $\nabla_1$ which is equivariant with respect to the $S^1$-action on the norma bundle $N_1$. The equivariant Euler class of $N_1$, which equals the top equivariant Chern class of the bundle since it is a complex vector bundle, is then represented by
\[
\begin{split}
e_{eq}(N_{\m{S}}) &= \prod_{i=1}^q \left( c_1 (H \otimes \m{L}_i) + 1\right)\\
&
=1 + \sum_{i=1}^q c_1(H \otimes \m{L}_i) + \sum_{i, j=1, i< j}^q c_1(H \otimes \m{L}_i)c_1(H \otimes \m{L}_j)  +\dots + \prod_{i=1}^qc_1(H \otimes \m{L}_i) \\
&=1 + c_1(H \otimes \pi^*\m{Q}) + c_2(H \otimes \pi^*\m{Q}) + \dots + c_q(H \otimes \pi^*\m{Q}) \\
&= c(H \otimes \pi^*\m{Q}).
\end{split}
\]
Its inverse is the total Segre class \cite[Chapter 10]{3264}:
\[
e_{eq}(N_{\m{S}})^{-1} = s(N_{\m{S}}).
\]

Finally, we need to compute the first Chern class of $X$. From the relative Euler sequence, and since $\cE$ has rank $r+1$ and $c_1(\cE)=c_1(\m{S}\oplus \m{Q})$, one sees that 
\begin{align*}
c_1(X) =& c_1(B)+c_1(H \otimes \pi^*\cE) \\
=& c_1(B) + (r+1) c_1(H) + c_1(\cE).
\end{align*}

Putting everything together
\begin{equation}\label{Eq:Fut_expression}
\begin{aligned}
\begin{multlined}
[\omega_k]^{n+r}\mrm{Fut} (\sigma) = \frac{n+r}{(n+r+1)!}\left\langle c_1(X) \cdot [\omega_k]^{n+r-1}, X\right\rangle\left\langle([\omega_k]+1)^{n+r+1}. s(H \otimes \pi^*\m{Q}), [\bb{P}(\m{S})] \right\rangle
\end{multlined}\\
-\frac{1}{(n+r)!}\left\langle[\omega_k]^{n+r}, X\right\rangle \left\langle\left(c_1(X) + q\right). ([\omega_k]+1)^{n+r} . s(H \otimes \pi^*\m{Q}), [\bb{P}(\m{S})] \right\rangle
\end{aligned}
\end{equation}

\subsection{Example}
We will now compute the first three terms in the $k$-expansion of the Futaki invariant \eqref{Eq:Fut_expression} explicitly under the simplifying assumption that $q=1$.  We will frequently use the following result \cite{diverio2016segre, Guler2012}.
\begin{lemma}\label{Lemma:diverio}
Let $\bb{P}(\m{E}) \to B$ be the projectivisation of a vector bundle of rank $r+1$. Let $\omega$ be a relatively Fubini-Study metric on $\bb{P}(\m{E})$ and $\beta$ be a cholomology class of degree $2j$ on $B$. Then
\[
\int_{\bb{P}(\m{E})} \widetilde \beta \wedge \omega^{r+j} = \beta \cdot s_j(\m{E}),
\]
where $\widetilde \beta$ is a representative of $\beta$ and $s_j(\m{E})$ is the $j$-th Segre class of $\m{E}$.
\end{lemma}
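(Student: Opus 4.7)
The plan is to identify Lemma \ref{Lemma:diverio} as a direct consequence of the characterisation of Segre classes via projectivised bundles, combined with the projection formula. In the setup of the paper, the relatively Fubini-Study form $\omega$ is globally closed on $\bb{P}(\m{E})$ and represents $c_1(H) = c_1(\m{O}_{\bb{P}(\m{E})}(1))$, as was explained in the introduction via $i F_{h^*} = \omega \in c_1(H)$. Consequently, the integral $\int_{\bb{P}(\m{E})} \widetilde\beta \wedge \omega^{r+j}$ is a topological quantity depending only on $\beta$ and on the class $c_1(H)^{r+j}$.

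With this in place, I would take a representative $\widetilde\beta$ on $B$ and consider its pull-back $\pi^*\widetilde\beta$ to $\bb{P}(\m{E})$, which by abuse of notation we identify with $\widetilde\beta$. The projection formula for integration along the fibres of $\pi:\bb{P}(\m{E}) \to B$ then gives
\[
\int_{\bb{P}(\m{E})} \pi^*\widetilde\beta \wedge \omega^{r+j} = \int_B \widetilde\beta \wedge \pi_*\left(\omega^{r+j}\right).
\]
The core step is then to identify the fibre integral $\pi_*\left( c_1(H)^{r+j}\right)$ with the $j$-th Segre class $s_j(\m{E})$. This is one of the standard characterisations of the Segre class (see Fulton, \emph{Intersection Theory}, Chapter 3): starting from the Grothendieck relation
\[
\sum_{i=0}^{r+1} \pi^*c_i(\m{E}) \cdot c_1(H)^{r+1-i} = 0
\]
in $H^*(\bb{P}(\m{E}))$, pushing forward via $\pi_*$ and using $c(\m{E}) \cdot s(\m{E}) = 1$ yields $\pi_*\left( c_1(H)^{r+j}\right) = s_j(\m{E})$ for every $j \geq 0$. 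Plugging this back in gives
\[
\int_{\bb{P}(\m{E})} \widetilde\beta \wedge \omega^{r+j} = \int_B \widetilde\beta \wedge s_j(\m{E}) = \beta \cdot s_j(\m{E}),
\]
which is the desired identity.

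Since the lemma is essentially a restatement of the definition of the Segre classes in terms of projectivised bundles, there is no real obstacle; the only non-trivial input is the standard computation of $\pi_*(c_1(H)^{r+j})$, which we import from Fulton and which is already attributed to \cite{diverio2016segre,Guler2012} in the paper.
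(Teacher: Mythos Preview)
Your argument is correct and is exactly the standard proof: the projection formula together with the identity $\pi_*\big(c_1(H)^{r+j}\big)=s_j(\m{E})$ (which is essentially the definition of the Segre classes in Fulton's formulation) immediately gives the result. The paper itself does not supply a proof of this lemma but simply quotes it from \cite{diverio2016segre, Guler2012}, so there is nothing further to compare.
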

We denote
\begin{align*}
\left\langle[\omega_k]^{n+r}, X\right\rangle =& \sum_{i=0}^n \alpha_i k^{n-i} \\
\left\langle c_1(X) . [\omega_k]^{n+r-1}, X\right\rangle =& \sum_{i=0}^n \beta_i k^{n-i} \\
\left\langle([\omega_k]+1)^{n+r+1}.s(H \otimes \pi^*\m{Q}), [\bb{P}(\m{S})] \right\rangle =& \sum_{i=0}^n \gamma_i k^{n-i} \\
 \left\langle\left(c_1(X) +q \right). ([\omega_k]+1)^{n+r} .s(H \otimes \pi^*\m{Q}), [\bb{P}(\m{S})] \right\rangle=& \sum_{i=0}^n \delta_i k^{n-i}.
\end{align*}
Thus the expansion of the Futaki invariant is
\begin{align*}
[\omega_k]^{n+r}\mrm{Fut} (\sigma) 
=&  \left( \frac{n+r}{(n+r+1)!} \beta_0 \gamma_0 - \frac{1}{(n+r)!}\alpha_0 \delta_0 \right) k^{2n} \\
&+ \left( \frac{n+r}{(n+r+1)!}  (\beta_0 \gamma_1 + \beta_1 \gamma_0)- \frac{1}{(n+r)!}(\alpha_0 \delta_1 + \alpha_1 \delta_0)\right) k^{2n-1} \\
&+ \left( \frac{n+r}{(n+r+1)!}  (\beta_0 \gamma_2 + \beta_1 \gamma_1 + \beta_2 \gamma_0) - \frac{1}{(n+r)!}(\alpha_0 \delta_2 + \alpha_1 \delta_1 + \alpha_2 \delta_0)\right) k^{2n-2} \\
&+O(k^{2n-3}).
\end{align*}

First we compute the expansion of the volume, i.e.\ the $\alpha$-terms.
Using that $L^{n+1} = 0$, Lemma \ref{Lemma:diverio} and the relations $s_1(\m{E}) = -c_1(\cE)$ and that $s_2(\cE) = c_1(\cE)^2 - c_2(\cE)$ one sees that
\begin{align*}
\alpha_0=& {n+r \choose n} L^n \\
\alpha_1=& - {n+r \choose n-1} L^{n-1}.c_1(\cE) \\
\alpha_2=& {n+r \choose n-2}  L^{n-2}.\left( c_1(\cE)^2 - c_2(\cE)\right).
\end{align*}

Similarly, by expanding $c_1(X) .[\omega_k]^{n+r-1}$, we see that
\begin{align*}
\beta_0=& {n+r-1 \choose n} \cdot (r+1) L^n \\
\beta_1=& {n+r-1 \choose n-1} \cdot \left( c_1(B) - r c_1 (\cE) \right) L^{n-1} \\
\beta_2=&{n+r-1 \choose n-2} \cdot \left(-c_1(B) c_1(\cE) + r c_1(\cE)^2- (r+1)c_2(\cE) \right)L^{n-2}.
\end{align*}

Next, we consider the $\gamma$-terms, i.e.\ we compute
$$
\left\langle([\omega_k]+1)^{n+r+1}\cdot (s(H \otimes \pi^*\m{Q})), [\bb{P}(\m{S})] \right\rangle.
$$
We have that if $V=H \otimes \pi^*\m{Q}$, then, since this is a line bundle, we have that $c_i(V) = 0$ for $i>1$, and so one can inductively prove that
$$
s_i (V) = (-1)^i c_1(V)^i.
$$
We therefore get that
\begin{align*}
s_i(V) =& (-1)^i c_1(V)^i \\
=&(-1)^i \left(H+\pi^*\m{Q}\right)^i.
\end{align*}
Note also that $\m{Q}.L^n = 0$, and so 
$$
s_i(V).L^n = (-1)^i H^i.L^n
$$
and in particular
$$
s(V).L^n = \sum_{i=0}^n H^i.L^n.
$$
Similarly, $\m{Q}^2.L^{n-1} = 0$, and so 
\begin{align}
\label{eq:Ln-1dot}
s_i(V).L^{n-1} = (-1)^i (H^i+iH^{i-1}.\m{Q}).L^{n-1}.
\end{align}
Note also that since the dimension of $\mathbb{P}(S)$ is $n+r-1$ and we are regarding $H \otimes \pi^* \m{Q}$ as a bundle over $\mathbb{P}(\m{S})$, the final term in the total Segre class $s(V)$ is the term of degree $n+r-1$.

With this place, we then see that
\begin{align*}
\gamma_0=&  {n+r+1 \choose n}  \left\langle(1+H)^{r+1}L^n .s (V), [\bb{P}(\m{S})] \right\rangle \\
=& {n+r+1 \choose n}  \left\langle(1+H)^{r+1} \sum_{i=0}^{n+r-1} (-1)^i H^i.L^n, [\bb{P}(\m{S})] \right\rangle \\
=& {n+r+1 \choose n} \sum_{j=0}^{r+1} \sum_{i=0}^{n+r-1}  (-1)^i {r+1 \choose j}  \left\langle H^{i+j}.L^n, [\bb{P}(\m{S})] \right\rangle .
\end{align*}

Now, this term is only non-zero when $i+j=r-1$, since $\bb{P}(\m{S})$ has dimension $n+r-1$. We therefore have that for each $j$, we need to pick $i=r-1-j$, but we also need to ensure that $i \in [0, n+r-1]$. The inequality $i\geq0$ then gives that $j\leq r-1$. The inequality $i \leq n+r-1$ gives that $j \geq r-1-(n+r-1)=-n$, which is always satisfied as $j \geq 0$. Thus 
\begin{align*}
\gamma_0=&{n+r+1 \choose n} \sum_{j=0}^{r-1} (-1)^{r-1-j} {r+1 \choose j}  \left\langle H^{r-1}.L^n, [\bb{P}(\m{S})] \right\rangle \\
=& {n+r+1 \choose n} \sum_{j=0}^{r-1} (-1)^{r-1-j} {r+1 \choose j} L^n .
\end{align*}

We treat the terms $\gamma_1$ and $\gamma_2$ similarly. By using Equation \eqref{eq:Ln-1dot} for the $\gamma_1$-term and the analogous equation 
\begin{align*}
s(V).L^{n-2} = \sum_{i=0}^{n+r-1} (-1)^i (H^i+iH^{i-1}.\m{Q} + \frac{1}{2}i (i-1)H^{i-2}.\m{Q}^2).L^{n-2}.
\end{align*}
coming from $\m{Q}^3.L^{n-2} = 0$ for the $\gamma_2$-term, one can by proceeding as above  verify that
\begin{align*}
\gamma_0 =& {n+r+1 \choose n} \sum_{j=0}^{r-1} (-1)^{r-1-j} {r+1 \choose j} L^n\\
\gamma_1=& {n+r+1 \choose n-1} \sum_{j=0}^{r} {r+2 \choose j}  (-1)^{r-j} (-c_1(\m{S})+(r-j)\m{Q}).L^{n-1} \\
\gamma_2=& {n+r+1 \choose n-2}\sum_{j=0}^{r+1} {r+3 \choose j} (-1)^{r+1-j}  \left(c_1(\m{S})^2-c_2(\m{S}))-(r+1-j)c_1(\m{S}).\m{Q} + \frac{(r+1-j)(r-j)}{2} \m{Q}^2\right).L^{n-2}.
\end{align*}

We have one final expansion to consider:
\begin{align*}
\left\langle\left(c_1(X) +q \right)\cdot ([\omega_k]+1)^{n+r} \cdot (s(H \otimes \pi^*\m{Q})), [\bb{P}(\m{S})] \right\rangle = \sum_{i=0}^n \delta_i k^{n-i}
\end{align*}
The strategy is the same as the above. For the leading order term $\delta_0$, we have
\begin{align*}
\delta_0 =&{n+r \choose n} \sum_{j=0}^r \sum_{i=0}^{n+r-1} (-1)^i  {r \choose j} \left\langle\left( (r+1)H^{i+j+1} + H^{i+j}\right).L^n, [\bb{P}(\m{S})] \right\rangle.
\end{align*}
This contributes only when the power of $H$ is $r-1$. Using this, after some manipulation, we obtain that
\begin{align*}
\delta_0 =&{n+r \choose n} \left( (r+1) \sum_{j=0}^{r-2} (-1)^{r-2-j} {r \choose j} + \sum_{j=0}^{r-1} (-1)^{r-1-j}  {r \choose j} \right)L^n 
\end{align*}

The computations of $\delta_1$ and $\delta_2$ are longer, but similar. In the end, we obtain that
\begin{align*}
\delta_0 =& {n+r \choose n} \left( (r+1) \sum_{j=0}^{r-2} (-1)^{r-2-j} {r \choose j} + \sum_{j=0}^{r-1} (-1)^{r-1-j}  {r \choose j} \right)L^n \\
\delta_1=&{n+r \choose n-1} \sum_{j=0}^{r-1} {r+1 \choose j} (-1)^{r-1-j} \left( c_1(B)+c_1(\cE) \right).L^{n-1}  \\
&+ {n+r \choose n-1} (r+1) \sum_{j=0}^{r-1} {r+1 \choose j}  (-1)^{r-1-j} (-c_1(\m{S})+(r-1-j) .\m{Q}).L^{n-1}   \\
&+ {n+r \choose n-1} \sum_{j=0}^{r} {r+1 \choose j}  (-1)^{r-j} (-c_1(\m{S})+(r-j)H^{r-1}.\m{Q}).L^{n-1}  \\
\delta_2=& {n+r \choose n-2}(r+1) \sum_{j=0}^{r} {r+2 \choose j}(-1)^{r-j} \left(c_1(\m{S})^2-c_2(\m{S})-(r-j) c_1(\m{S}).\m{Q} + \frac{(r-j)(r-j-1)}{2} \m{Q}^2\right).L^{n-2}  \\
&+ {n+r \choose n-2}\sum_{j=0}^{r} {r+2 \choose j}  (-1)^{r-j} \left(-c_1(\m{S})+(r-j) Q \right).(c_1(B)+c_1(\m{E})).L^{n-2} \\
&+ {n+r \choose n-2} \sum_{j=0}^{r+1} {r+2 \choose j} (-1)^{r+1-j} \left(c_1(\m{S})^2-c_2(\m{S})- (r+1-j) c_1(\m{S}).\m{Q} + \frac{(r+1-j)(r-j)}{2} \m{Q}^2\right).L^{n-2}.
\end{align*}

\subsection{A concrete example: rank 2 vector bundles over a surface}
\label{sec:concreteeg}
We now specialise our discussion to the case when the base has dimension $n=2$ and $r=2$, so that $\cE$ is the extension of a rank $2$ slope stable vector bundle $\m{S}$ by a line bundle $\m{Q}$, over a surface. In this case, one verifies that 
\begin{align*}
  \begin{split}\alpha_0 =& 6L^2 \\
\alpha_1 =& -4 c_1(\cE).L \\
\alpha_2 =& c_1(\cE)^2 - c_2(\cE) \\
\beta_0 =& 9L^2\\
\beta_1 =& 3(c_1(B) -2c_1(\cE)).L \\
\beta_2 =& -c_1(B).c_1(\cE) + 2 c_1(\cE)^2 - 3c_2(\cE)
  \end{split}
  \begin{split}\gamma_0 =& 20 L^2 \\
\gamma_1=& -5(3c_1(\m{S})+2c_1(\m{Q})).L\\
\gamma_2 =& 4(c_1(\m{S})^2 - c_2(\m{S}))+3c_1(\m{S}).c_1(\m{Q})+2c_1(\m{Q})^2\\
\delta_0 =& 24L^2\\
\delta_1 =& 8(c_1(B)+c_1(\cE)).L -28c_1(\m{S}).L-16c_1(\m{Q}).L\\
\delta_2 =& 10(c_1(\m{S})^2-c_2(\m{S})) + 6 c_1(\m{S}).c_1(\m{Q}) + 3c_1(\m{Q})^2 \\
&- (3c_1(\m{S}) + 2c_1(\m{Q}))(c_1(B) + c_1(\cE)).
 \end{split}
\end{align*}

Denoting the expansion of the Donaldson-Futaki invariant as
\[
[\omega_k]^{n+r}\mrm{Fut}(\sigma) = a_0 k^{2n}+a_1 k^{2n-1} + a_2 k^{2n-2} + O(k^{2n-3}),
\]
one can from the relations above easily verify that
\begin{align*} \frac{(n+r)!}{L^2} a_0 =& 0
\end{align*}
and that
\begin{align*}
\frac{(n+r)!}{L^2} a_1 =& 72 \left(\mu_L(\m{S}) - \mu_L(\m{E}) \right).
\end{align*}
This gives the relationship with the slope that we already knew from the results of Ross--Thomas (Proposition \ref{prop:RossThomas}).

Finally, we consider the $O(k^{2n-2})$-term. Since we are ultimately interested in the case when $S$ is a destabilising subbundle, i.e.\ $\mu_L(\m{S}) = \mu_L(\m{E})$, we make this assumption in the computation. We then obtain the following simplification, which is straightforward to verify given the above formulae and which we expect holds in general.
\begin{lemma}
Suppose $\mu_L(\m{S}) = \mu_L(\m{E})$. Then $\frac{n+r}{n+r+1}  \beta_1 \gamma_1  - \alpha_1 \delta_1=0$.
\end{lemma}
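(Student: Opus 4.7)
The plan is a direct substitution, using the slope equality to reduce the number of independent intersection numbers involved. Since $\rk \m{S} = 2$ and $\rk \m{E} = 3$, the hypothesis $\mu_L(\m{S})=\mu_L(\m{E})$ becomes $c_1(\m{S})\cdot L = \tfrac{2}{3}\, c_1(\m{E})\cdot L$, and from the additivity of $c_1$ in the extension $0 \to \m{S} \to \m{E} \to \m{Q} \to 0$ it follows that $c_1(\m{Q})\cdot L = \tfrac{1}{3}\, c_1(\m{E})\cdot L$.

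Introducing the shorthand $b := c_1(B)\cdot L$ and $c := c_1(\m{E})\cdot L$, I would substitute these relations into the explicit formulae of the previous subsection. This gives immediately $\alpha_1 = -4c$ and $\beta_1 = 3b - 6c$, while the slope relations collapse the expressions for $\gamma_1$ and $\delta_1$ to
\[
\gamma_1 = -5\bigl(3\cdot\tfrac{2c}{3} + 2\cdot\tfrac{c}{3}\bigr) = -\tfrac{40c}{3}, \qquad \delta_1 = 8(b+c) - 28\cdot\tfrac{2c}{3} - 16\cdot\tfrac{c}{3} = 8b - 16c.
\]
With $n=r=2$, so that $\tfrac{n+r}{n+r+1} = \tfrac{4}{5}$, one finds
\[
\tfrac{4}{5}\beta_1\gamma_1 = -\tfrac{32c}{3}(3b - 6c) = -32bc + 64c^2 = (-4c)(8b - 16c) = \alpha_1\delta_1,
\]
which is the identity claimed.

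There is no real obstacle here: the statement is a numerical verification made possible by the proportionality between $c_1(\m{S})\cdot L$, $c_1(\m{Q})\cdot L$, and $c_1(\m{E})\cdot L$ enforced by the slope equality. The only care required is in bookkeeping the coefficients. For the expected generalisation beyond the case $n=r=2$, $\rk\m{Q}=1$, the same strategy ought to apply: once the slope equality is used to extract the common combination of intersection numbers appearing in both $\gamma_1$ and $\delta_1$, the expression $\tfrac{n+r}{n+r+1}\beta_1\gamma_1 - \alpha_1\delta_1$ should factor through $\mu_L(\m{S}) - \mu_L(\m{E})$ and so vanish identically on the locus where the slopes agree.
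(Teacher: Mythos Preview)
Your proposal is correct and is exactly the direct verification the paper has in mind: the paper states the lemma is ``straightforward to verify given the above formulae'' and gives no further argument, and your substitution using $c_1(\m{S})\cdot L = \tfrac{2}{3}c_1(\m{E})\cdot L$ and $c_1(\m{Q})\cdot L = \tfrac{1}{3}c_1(\m{E})\cdot L$ carries this out cleanly.
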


Thus, under this condition, the $O(k^{2n-2})$-term of the Futaki invariant is given by
\begin{align*}
\frac{(n+r)!}{L^2} a_2 =& \frac{n+r}{n+r+1}  (\beta_0 \gamma_2 + \beta_1 \gamma_1 + \beta_2 \gamma_0) - (\alpha_0 \delta_2 + \alpha_1 \delta_1 + \alpha_2 \delta_0).
\end{align*}
Manipulating the formulae above, we then obtain the following.
\begin{lemma}
If $\mu_L(\m{S}) = \mu_L(\m{E})$, the $a_2$-term in the expansion of the Donaldson-Futaki invariant is given by
\begin{align*} 
\frac{(n+r)!}{L^2} a_2 =& \frac{2}{5} \left( -51 c_1(\m{S})^2 + 78 c_2(\m{S}) -3 c_1(\m{S}).c_1(\cE) +41c_1(\cE)^2 -60c_2(\cE)\right) \\
&+12 \left(\mu_{c_1(B)}(\m{S}) - \mu_{c_1(B)}(\cE)\right).
\end{align*}
\end{lemma}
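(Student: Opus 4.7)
The plan is a direct computation by substitution, using only the concrete formulae for $\alpha_i,\beta_i,\gamma_i,\delta_i$ already tabulated in the case $n=r=2$, $q=1$. After the preceding lemma has simplified the $a_1$-term and eliminated one cross term in $a_2$, we are left with
\[
\frac{(n+r)!}{L^2}a_2 \;=\; \frac{n+r}{n+r+1}\bigl(\beta_0\gamma_2+\beta_1\gamma_1+\beta_2\gamma_0\bigr)-\bigl(\alpha_0\delta_2+\alpha_1\delta_1+\alpha_2\delta_0\bigr),
\]
with $n+r=4$. So the first step is to substitute the listed values of $\alpha_i,\beta_i,\gamma_i,\delta_i$ directly into this expression.

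Since everything is given in terms of $c_1(\m{S}), c_2(\m{S}), c_1(\m{Q})$, and since we want the answer in terms of $c_i(\m{S})$ and $c_i(\cE)$, the second step is to use the Whitney formula for the short exact sequence $0\to\m{S}\to\cE\to\m{Q}\to 0$ to get
\[
c_1(\m{Q})=c_1(\cE)-c_1(\m{S}),\qquad c_2(\cE)=c_2(\m{S})+c_1(\m{S}).c_1(\m{Q})=c_2(\m{S})+c_1(\m{S}).(c_1(\cE)-c_1(\m{S})).
\]
In particular $c_1(\m{Q})^2 = c_1(\cE)^2 - 2c_1(\m{S}).c_1(\cE)+c_1(\m{S})^2$, which lets us replace every appearance of $c_1(\m{Q})$ and of $c_2(\m{S})$ appearing alone. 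All intersection products in the $\gamma_2,\delta_2$ rows are on $B$ (the $L^{n-2}=L^0$ factor is trivial), so no care about the projective bundle cohomology is needed at this stage.

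The third step is to exploit the assumption $\mu_L(\m{S})=\mu_L(\cE)$. For $\rk\m{S}=2$, $\rk\cE=3$ this is the single linear relation $3\,c_1(\m{S}).L=2\,c_1(\cE).L$, equivalently $c_1(\m{Q}).L=\tfrac{1}{2}c_1(\m{S}).L=\tfrac{1}{3}c_1(\cE).L$. This relation applies only to the $\beta_1\gamma_1$ and $\alpha_1\delta_1$ contributions (the terms involving $L^{n-1}=L^1$), and by the preceding lemma these cancel exactly against each other under the slope assumption; in the proof this is what allows us to drop that mixed term entirely.

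The fourth and final step is to gather the coefficients of the independent classes $c_1(\m{S})^2$, $c_2(\m{S})$, $c_1(\m{S}).c_1(\cE)$, $c_1(\cE)^2$, $c_2(\cE)$, and $c_1(B).c_1(\m{S})$, $c_1(B).c_1(\cE)$. The last two repackage into the slope difference $\mu_{c_1(B)}(\m{S})-\mu_{c_1(B)}(\cE)=\tfrac{1}{2}c_1(B).c_1(\m{S})-\tfrac{1}{3}c_1(B).c_1(\cE)$, and reading off the numerical factors yields the stated coefficients $\tfrac{2}{5}(-51,78,-3,41,-60)$ and $12$. The only genuine obstacle is bookkeeping: the $\gamma_2$ and $\delta_2$ expansions contain signed binomial sums $\sum_j\binom{r+k}{j}(-1)^{r+\epsilon-j}$ for $r=2$ with several values of $k,\epsilon$, and one must evaluate these carefully before substituting. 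I would organise the calculation into a single table, one column per monomial class, and check the final slope simplification by specialising to the trivial extension $\cE=\m{S}\oplus\m{Q}$ (where $c_2(\cE)-c_2(\m{S})=c_1(\m{S}).c_1(\m{Q})$ gives an independent sanity check).
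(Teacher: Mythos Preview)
Your proposal is correct and follows exactly the paper's approach: the paper's entire proof is the single sentence ``Manipulating the formulae above, we then obtain the following,'' and your plan spells out precisely those manipulations --- substituting the tabulated $\alpha_i,\beta_i,\gamma_i,\delta_i$, using the Whitney relations to eliminate $c_1(\m{Q})$, invoking the preceding lemma to drop the $\tfrac{n+r}{n+r+1}\beta_1\gamma_1-\alpha_1\delta_1$ combination under the slope hypothesis, and collecting coefficients. One minor remark: the signed binomial sums you mention at the end have already been evaluated in the explicit $n=r=2$ table, so you can work directly from those numerical values rather than re-deriving them.
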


\begin{remark}
The formula for the expansion of the Donaldson-Futaki invariant can be compared to that of Keller and Ross in \cite{KellerRoss14}, who considered the case when the rank of $E$ is $2$ instead of $3$. Their formula is obtained through the Ross-Thomas expansion, but can also be recovered from the localisation approach used here, using the formulae from the previous section with $r=1$.   Keller--Ross  relate the (Chow) stability of $\bb{P}(\cE)$ to the Gieseker stability of $\cE$. Note that in general, adiabatic slope stability cannot be equivalent to Gieseker stability since adiabatic slope stability does not have any terms involving the higher Chern classes of the base $B$, but such terms do appear in the Todd class, and therefore in Gieseker stability. When $B$ is a surface, the $c_2(B)$-term appearing in Gieseker stability does depend on the bundle, and does not play a role.
\end{remark}

\subsubsection{An adiabatically slope stable vector bundle over the blow up of $\bb{P}^2$}
\label{sec:concreteexample}
We will now apply the above to a concrete example. Let $B=\Bl_p \mathbb{P}^2$. The Picard group is generated by the pullback of the $\mathcal{O}(1)$ line bundle on $\mathbb{P}^2$ and the exceptional divisor $D$ of the blowup. Note that the canonical divisor $K_B=-3H+D$, thus $L:=3H-D$ is ample on $B$.

Let $\m{S}$ be the pullback of a slope stable vector bundle on $\mathbb{P}^2$ with $c_1(\m{S})=0$ and let $\m{Q}=H-3D$. Then
\begin{align*}
\mu_L(\m{Q}) =& (3H-D)(H-3D) = 0, \\
\mu_L(\m{S}) =&0.
\end{align*}
By the Bogomolov inequality, $c_2(\m{S}) \geq 0$. In fact, over projective spaces, simplicity is equivalent to slope stability for rank $2$ vector bundles (see \cite[Theorem 2.1.2.10]{okonek2011vector}), and for each $c_2>0$ there exists a simple bundle with $c_1(\m{S})=0$ \cite[Theorem 8]{Schwarzenberger61}. For our purpose we can take $c_2(\m{S}) = 1$.

Assume that $\cE$ is a simple extension 
\begin{align}
\label{eq:Eexactseq}
0 \to \m{S} \xrightarrow{i} \cE \xrightarrow{q} \m{Q} \to 0
\end{align}
of $\m{S}$ by $\m{Q}$ and let $\m{E}_0 = \m{S} \oplus \m{Q}$ be the trivial extension. Then
\begin{align*}
c_1(\cE) =& c_1(\m{Q}) \\
c_2(\cE) =& c_2(\m{S}).
\end{align*}
Now, this gives that $a_0=a_1=0$. We also have that
\begin{align*}
c_1(\cE)^2 =& (H-3D)^2 =-8 \\
\mu_{c_1(\m{B})}(\cE) =& \frac{1}{3} (3H-D)(H-3D) =0.
\end{align*}
Thus
\begin{align*}
\frac{5!}{2L^2} a_2 =&  18 c_2(\m{S}) +41(-3)\\
=& 18c_2(\m{S}) -328.
\end{align*}
Since $c_2(\m{S}) = 1 < \frac{328}{18}$, $\cE$ is adiabatically slope stable, provided such an extension with $\cE$ being simple exists.

It remains to show that there exists in fact a simple extension of $\m{S}$ by $\m{Q}$.
To do this, we first compute $h^1(\m{S} \otimes \m{Q}^*)$ in order to show that there are non-trivial extensions of $\m{S}$ by $\m{Q}$.
We then show that any non-trivial extension has to be simple. 

We have
\begin{align*}
h^1(\m{S} \otimes \m{Q}^*) =& h^0(\m{S} \otimes \m{Q}^*) + h^2(\m{S} \otimes \m{Q}^*) - \chi(\m{S} \otimes \m{Q}^*) \\
=&  h^0(\m{S} \otimes \m{Q}^*) + h^0(K_B \otimes \m{S}^* \otimes \m{Q})- \chi(\m{S} \otimes \m{Q}^*)
\end{align*}
where in the second line we used Serre duality. Now, $\m{S} \otimes \m{Q}^*$ is stable bundle with slope zero, and so cannot admit a section (a section would give an inclusion $\mathcal{O} \subset \m{S} \otimes \m{Q}^*$, whose slope would then agree with that of $\m{S} \otimes \m{Q}^*$). As $K_B=-3H+D$ and $\m{S}^* \otimes \m{Q}$ is a stable bundle with vanishing slope, $ K_B \otimes \m{S}^* \otimes \m{Q}$ is a stable bundle with negative slope, and so cannot admit a section either. Thus $h^0(\m{S} \otimes \m{Q}^*)=0=h^0(K_B \otimes \m{S}^* \otimes \m{Q})$ and so
\begin{align*}
h^1(\m{S} \otimes \m{Q}^*) =& - \chi(\m{S} \otimes \m{Q}^*).
\end{align*}

To compute $h^1$, we then use Riemann--Roch. Note that when $B=\Bl_p \mathbb{P}^2$, we have
$$
\int_B c_1(B)^2+c_2(B) = 12.
$$ 
Moreover,
\begin{align*}
c_1(\m{S} \otimes \m{Q}^*) =&c_1(\m{S}) + 2 c_1(\m{Q}^*) = -2(H-3D) \\
c_2(\m{S} \otimes \m{Q}^*) =&c_2(\m{S}) + c_1(\m{S})c_1(\m{Q}^*)+c_1(\m{Q}^*)^2 = c_2(\m{S}) + (H-3D)^2.
\end{align*}
This gives that
\begin{align*}
h^1(\m{S} \otimes \m{Q}^*) &= - \int_B \textnormal{ch}(\m{S} \otimes \m{Q}^*) \textnormal{Td}(B) \\
&
\begin{multlined}
=-\int_B \left(2+c_1(\m{S} \otimes \m{Q}^*) + \frac{c_1(\m{S} \otimes \m{Q}^*)^2 - 2c_2(\m{S} \otimes \m{Q}^*)}{2} \right)\\ \cdot\left( 1+ \frac{c_1(B)}{2} + \frac{c_1(B)^2+c_2(B)}{12} \right)
\end{multlined} \\
&= -(2 - (H-3D)(3H-D)+2(H-3D)^2- c_2(\m{S})-(H-3D)^2) \\
&= 6+c_2(\m{S}) \\
&\geq 6.
\end{align*}
Note that in the above we also showed that $h^2(\m{S} \otimes \m{Q}^*) =h^0(K_B \otimes \m{S}^* \otimes \m{Q})=0$, so that all deformations are unobstructed. Thus there are non-trivial extensions of $\m{S}$ by $\m{Q}$. 

It remains to show that these are simple. More precisely, we claim that all but the trivial extension is simple, i.e.\ we show that $H^0(\End \cE)$ is one dimensional whenever $\cE$ is a non-trivial extension of $\m{S}$ by $\m{Q}$.

We show first that $\cE$ does not split as a sum $\m{S}'\oplus \m{Q}'$ for any other $\m{S}'$ and $\m{Q}'$. Assume by contradiction that it does. Note then that since $\m{S}'$ is both a subsheaf and a quotient of $\cE$, and $\cE$ is semistable, $\m{S}'$ must have the same slope as $\cE$. Similarly, $\m{Q}'$ also has the same slope. Thus $\m{S}'$ and $\m{Q}'$ must be slope semistable, and of the same slope as $\m{S}, \m{Q}$ and $\cE$. 

Next, consider the diagram
\[
\begin{tikzcd}
            &                                       & 0 \arrow[d]                                  &             &   \\
            &                                       & \m{S}' \arrow[d, "i'"] \arrow[rd]                &             &   \\
0 \arrow[r] & \m{S} \arrow[r, "i"] \arrow[r] \arrow[rd] & \cE=\m{S}'\oplus \m{Q}' \arrow[r, "q"] \arrow[d, "q'"] & \m{Q} \arrow[r] & 0 \\
            &                                       & \m{Q}' \arrow[d]                                 &             &   \\
            &                                       & 0                                            &             &  
\end{tikzcd}
\]
The maps $q'\circ i : \m{S} \to \m{Q}'$ and $q\circ i': \m{S}' \to \m{Q}$, obtained by composing with our starting sequence \eqref{eq:Eexactseq}, are morphisms of semistable sheaves. Then by \cite[Corollary to Lemma 1.2.8]{okonek2011vector}, as $\m{S}$ is stable, $q'\circ i$ is either $0$ or injective, and similarly $q\circ i'$ is either $0$ or surjective as $\m{Q}$ is stable. If both $q'\circ i$ is injective and $q\circ i'$ is surjective, then $\rk \m{S} \leq \rk \m{Q}'$ and $\rk \m{S}' \geq \rk \m{Q}$. This gives that
\begin{align*}
\rk \m{E} =& \rk \m{S} + \rk \m{Q} \\
\leq& \rk \m{Q}' + \rk \m{S}' \\
=& \rk \m{E},
\end{align*}
and so we must have equality in both of the inequalities. In particular, in this case both maps are isomorphisms. However, since the sequence \eqref{eq:Eexactseq} does not split by assumption, the morphisms cannot both be isomorphisms. Therefore at least one of them is the $0$ morphism.

Assume that $q\circ i' = 0$. Then we have two sequences:
\[
\begin{tikzcd}
0 \arrow[r] & \m{S}' \arrow[r, "i'"] \arrow[d, "0"] & \cE \arrow[r, "q"] \arrow[d, "="] & \m{Q} \arrow[r] \arrow[d, "="] & 0 \\
0 \arrow[r] & \m{S} \arrow[r, "i"]                  & \cE \arrow[r, "q"]                & \m{Q} \arrow[r]                & 0
\end{tikzcd}
\]
where the map $\m{S}'\to \m{S}$ is induced by the inclusion $i'(\m{S}) \subseteq \ker(q) = i(\m{S})$, and it is the zero map because $\m{S}$ and $\m{S}'$ are both stable and non isomorphic. Therefore $i'=0$ and analogously, if $q' \circ i =0$, we get that $q'=0$.
Therefore $\cE$ does not split.

Next we show that $h^0(\End \cE) = 1$.
Taking the dual of the sequence \eqref{eq:Eexactseq} and tensoring with $\m{E}^*$, we obtain the sequence
\begin{align*}
0 \to \cE \otimes \m{Q}^* \to \End \cE \to \cE \otimes \m{S}^* \to 0.
\end{align*}
This gives a long exact sequence
\begin{align*}
0 \to H^0(\cE \otimes \m{Q}^*) \to H^0 (\End \cE) \to H^0(\cE \otimes \m{S}^*) \to \ldots.
\end{align*}

First we observe that since the sequence \eqref{eq:Eexactseq} does not split and $\m{Q}$ is stable, $H^0(\cE \otimes \m{Q}^*) = \{0\}$. Indeed, a section $\varphi \in H^0(\cE \otimes \m{Q}^*)$ would induce an endomorphism $q \circ \varphi$ of $\m{Q}$. From the stability of $\m{Q}$ we have that $q \circ \varphi$ is either the identity or the zero map. Since the sequence \eqref{eq:Eexactseq} does not split, it cannot be the identity, thus it is the zero map. If $\varphi\ne0$, then $\varphi$ is injective since $\m{Q}$ is stable, and so we have a short sequence
\[
0 \to \m{Q} \xrightarrow{\varphi} \cE \xrightarrow{q} \m{Q} \to 0.
\]
This implies that $\cE$ splits, against our previous conclusion. So $\varphi=0$.

Next, since $\m{S}$ is stable, $H^0(\m{S} \otimes \m{S}^*) = \langle \Id_{\m{S}}\rangle$. Moreover, since $\m{S}$ is a subbundle of $\cE$, we have an inclusion $H^0(\m{S} \otimes \m{S}^*) \subseteq H^0(\cE \otimes \m{S}^*)$. We claim that this inclusion is an equality. Indeed, for the trivial extension, we have
$$
H^0(\cE_0 \otimes \m{S}^*) = H^0 (\End \m{S}) \oplus H^0(\m{Q} \otimes \m{S}^*).
$$
Since $\m{Q} \otimes \m{S}^*$ is a stable bundle of slope zero and $\m{Q}$ and $\m{S}$ are not isomorphic, it cannot admit a section. So the inclusion $H^0(\End \m{S}) \subseteq H^0(\cE_0 \otimes \m{S}^*)$ is an equality for the trivial extension $\cE_0$. Since $\cE_0 \otimes \m{S}^*$ has at least as many sections as $\cE \otimes \m{S}^*$, it follows that the inclusion is an equality also for $\cE$.

For the above, we see that 
$$
1 \leq h^0 (\End \cE) \leq h^0(\m{E} \otimes \m{Q}^*)+h^0(\cE \otimes \m{S}^*)=1.
$$
Thus $h^0(\End \cE) = 1$ and so $\cE$ is simple.

\begin{remark}
The above strategy for proving that $\cE$ is indecomposable and simple has been implemented in similar contexts by Brambila-Paz (see for example \cite{brambilapaz1990moduli} and \cite[Lemma 2.5]{BrambilaSierra2022moduli}); we gratefully thank L.\ Brambila-Paz for explaining this to us.
\end{remark}

The upshot is that $\cE$ satisfies all the requirements of Theorem \ref{thm:main}. Unfortunately, the base $B=\Bl_p\mathbb{P}^2$ does not. It has automorphisms and only has an extremal metric, not a cscK one. Luckily, this is easily remedied, since the intersection numbers are unchanged upon pulling back to blowups in points.

First we blow up $\Bl_p\bb{P}^2$ to a base with a cscK metric. This is achieved by blowing $B$ up in two points and picking the polarisation such that the volumes of all the exceptional divisors equal that of $D$ above, which is just the anticanonical polarisation on the blowup of $\bb{P}^2$ in three points. This admits a K\"ahler-Einstein metric \cite{tianyau1987}. We then apply the Arezzo--Pacard theorem \cite{arezzopacard1, arezzopacard2} to further blowup a collection of points such that the resulting manifold does not have any continuous automorphisms, and still has a cscK metric, in a certain polarisation. Replacing the base $\Bl_p \bb{P}^2$ with $B$ where $\tau : B \to \Bl_p \bb{P}^2$ is the blowup in this collection of points, we obtain the same expansion of the Donaldson-Futaki invariant for the pulled back bundle (up to the change in the volume of the polarisation). Moreover, by \cite{sibley15}, $\tau^*E$ remains slope semistable and has graded object which is the pullback of the graded object before blowing up. We can then apply our main result to obtain the following, which is a more detailed version of Corollary \ref{cor:eg}.
\begin{corollary}
\label{cor:eg2}
Let $\tau : B \to \Bl_p \bb{P}^2$ be the blowup of $\Bl_p \bb{P}^2$ in a collection of points as above and let $L$ be a polarisation which makes two of the points have the same volume as $D \subset \Bl_p \bb{P}^2$, and the remaining exceptional divisors have sufficiently small volume. Then $\tau^*\cE$ is strictly slope semistable, and $X=\mathbb{P}(\tau^*\cE)$ admits a cscK metric in $c_1(\mathcal{O}(1) \otimes L^k)$ for all sufficiently large $k$.
\end{corollary}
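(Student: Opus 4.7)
The plan is to verify the hypotheses of Theorem \ref{thm:main} for the pair $(B, L)$ and the vector bundle $\tau^*\cE$, and then apply that theorem. We need that $\tau^*\cE$ is simple, sufficiently smooth and adiabatically slope stable, that $\Aut(B, L)$ is discrete, and that $B$ admits a cscK metric in $c_1(L)$.

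First I would construct $B$ and $L$. Starting from $\Bl_p \bb{P}^2$, blow up two further points to obtain $\Bl_3 \bb{P}^2$, which admits a K\"ahler--Einstein metric in its anticanonical class \cite{tianyau1987}; this corresponds to choosing an ample class $L_0$ that gives the three exceptional divisors equal volume. To eliminate the continuous automorphisms still present, blow up further points in general position and apply Arezzo--Pacard \cite{arezzopacard1, arezzopacard2} to produce a cscK metric on the resulting manifold $B$ in an ample class $L$ which perturbs $\tau^* L_0$ only by exceptional divisors with small coefficients, and for which a generic choice of blown-up points makes $\Aut(B, L)$ discrete.

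Next I would transfer the properties of $\cE$ to $\tau^*\cE$. Simplicity follows from the projection formula and $\tau_*\cO_B = \cO_{\Bl_p\bb{P}^2}$, which give $H^0(\End(\tau^*\cE)) \simeq H^0(\End(\cE)) = \C \cdot \Id$. By \cite{sibley15}, $\tau^*\cE$ is slope semistable with respect to $L$ with graded object $\tau^*(\m{S} \oplus \m{Q})$, which is locally free; hence $\tau^*\cE$ is sufficiently smooth. Semistability is strict because $c_1(\tau^*\m{S})$ pulls back from $\Bl_p \bb{P}^2$ and pairs trivially with the exceptional divisors of $\tau$, forcing $\mu_L(\tau^*\m{S}) = \mu_L(\tau^*\cE) = 0$. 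By Corollary \ref{cor:adiabaticsubbundles}, adiabatic slope stability need only be tested on subbundles, and up to rescaling the only destabilising candidate is $\tau^*\m{S}$. All intersection numbers entering \eqref{Eq:Fut_expression} for the associated test configuration pull back from $\Bl_p \bb{P}^2$, so its Donaldson--Futaki expansion agrees up to an arbitrarily small perturbation-error with the one computed in \S \ref{sec:concreteexample}; there the strict inequality $18 c_2(\m{S}) - 328 = -310 < 0$ forces $a_2 < 0$, which via Theorem \ref{Thm:classical_Futaki_equal_DF} translates into strict positivity of the Donaldson--Futaki invariant for all sufficiently large $k$, i.e.\ adiabatic slope stability.

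The main technical point is ensuring the Arezzo--Pacard perturbation is small enough to preserve the strict numerical inequality of \S \ref{sec:concreteexample} once we replace $\tau^* L_0$ by $L$. Since the relevant intersection-theoretic quantities depend continuously on the polarisation and the inequality has a comfortable margin, this is achievable by taking the added exceptional volumes sufficiently small. With all hypotheses of Theorem \ref{thm:main} verified, it produces the desired cscK metric in $c_1(\cO(1)) + k c_1(L)$ on $\P(\tau^*\cE)$ for all sufficiently large $k$.
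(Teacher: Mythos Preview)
Your proposal is correct and follows essentially the same route as the paper, which proves the corollary in the paragraph immediately preceding its statement: construct $B$ via the K\"ahler--Einstein blowup to $\Bl_3\bb{P}^2$ followed by Arezzo--Pacard blowups, invoke \cite{sibley15} for semistability and the graded object of the pullback, observe that the Donaldson--Futaki expansion is essentially unchanged, and apply Theorem~\ref{thm:main}. You supply more detail than the paper does (the projection-formula argument for simplicity, the explicit check that slopes remain equal after pullback), which is fine.

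One small imprecision: it is not literally true that ``all intersection numbers entering \eqref{Eq:Fut_expression} pull back from $\Bl_p\bb{P}^2$''. Both $L^2$ and $c_1(B)$ change non-trivially already when passing to $\Bl_3\bb{P}^2$ with the anticanonical polarisation (this is not an $\epsilon$-small perturbation). What actually makes the computation go through is that the Chern classes of $\tau^*\m{S}$, $\tau^*\m{Q}$, $\tau^*\cE$ are pulled back, and the specific terms in the Lemma for $a_2$ that involve $L$ or $c_1(B)$ --- namely $\mu_{c_1(B)}(\m{S})-\mu_{c_1(B)}(\cE)$ and the overall $L^2$ scaling --- either vanish identically (since $c_1(\m{S})=0$ and $c_1(\cE)\cdot c_1(B)=0$ persists under these blowups) or do not affect the sign. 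Your continuity argument then correctly handles the subsequent Arezzo--Pacard perturbation. Also, your reduction to the single subbundle $\tau^*\m{S}$ is really justified by the structure of the proof of Theorem~\ref{Thm:stability} (the moment map flow only produces filtrations built from the Jordan--H\"older pieces), rather than by Corollary~\ref{cor:adiabaticsubbundles} alone, which still leaves all subbundles to check; the paper leaves this equally implicit.
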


\begin{remark}
One can extend a stable bundle $\m{S}$ by the structure sheaf $\mathcal{O}$ on $\bb{P}^2$ to produce a strictly slope semistable rank $3$ bundle on $\bb{P}^2$. As above, any non-trivial extension $\cE$ will then be a simple bundle. However, in this case, as $c_1(\cE)  =0$,
\begin{align*}
\frac{5!}{2L^2} a_2 =&  18 c_2(\m{S}) \geq 0,
\end{align*}
and thus $\cE$ will be asymptotically slope unstable, no matter which $\m{S}$ we choose.
\end{remark}

\begin{remark}
In \cite{Hattori2022}, Hattori introduced a notion of stability for fibrations called \emph{adiabatic K-stability}. He defines a fibration $(X,H) \to (B,L)$ to be adiabatically K-stable if there exists a $\epsilon_0$ such that the Donaldson-Futaki invariant of all test configurations for $(X, \epsilon H+ L)$ is positive for all $\epsilon \in (0,\epsilon_0)$. He defines uniform adiabatic K-stability by requiring that there exists a $\delta>0$ and an $\epsilon_0 >0$ such that for all $\epsilon \in (0, \epsilon_0)$,
\[
\mrm{DF}_{(X, \epsilon H + L)} (\m{X},\m{H}) \ge \delta \norm{(\m{X}, \m{H})}_m
\]
for all test configurations $(\m{X}, \m{H})$ for $(X, \epsilon H + L)$, where $\delta$ does not depend on $\epsilon$ and $\norm{(\m{X}, \m{H})}_m$ is the minimum norm of the test configuration \cite{dervan2016uniformstability, bhj17}.
In terms of our adiabatic parameter $k = \epsilon^{-1}$, this means that the fibration is uniformly adiabatically K-stable if there exists a $\delta >0$ and a $k_0>0$ such that
\begin{align}
\label{eq:adiabaticKstab}
\mrm{DF}_{(X, H + kL)}(\m{X},\m{H}) \ge \frac{\delta}{k} \norm{(\m{X}, \m{H})}_m
\end{align}
for all $k>k_0$. Note we to divide by $k$ on the right hand side because the Donaldson--Futaki invariant scales like the reciprocal of the scaling factor upon scaling the underlying polarisation, but the minimum norm remains unchanged.

Let now $\m{E}$ be the vector bundle in Corollary \ref{cor:eg2} and $(\m{X}_{\m{S}}, \m{H}+kL)$ be a test configuration induced by the vector subbundle $\m{S}$ used to construct $\m{E}$. Then the minimum norm admits an expansion \cite[\S 2.5]{dervansektnan19b}:
\begin{equation}\label{eq:minimalnorm}
\norm{(\m{X}_{\m{S}}, \m{H}+kL)}_m = k^n\left( \frac{L^n. \m{H}^r}{r+1} + L^n.\m{H}^r.(\m{H}-H) \right) + \Ok{n-1}.
\end{equation}
This leading order term is the Donaldson-Futaki invariant of the induced test configuration for the generic fibre, which is a non-trivial product test configuration for $\bb{P}^r$. 
From the expression \eqref{eq:DFexpansion_RossThomas} we see that the first non-vanishing term of the Donaldson-Futaki invariant is the $k^{n-2}$-term. However, from the expression \eqref{eq:minimalnorm} we see that the leading order term of the minimum norm is of order $k^n$ for some non-trivial test configuration. Thus in this case, the left hand side in the inequality \eqref{eq:adiabaticKstab} is $O(k^{n-2})$, but the right hand side is $O(k^{n-1})$. In particular, $\bb{P}(\m{E})$ cannot be uniformly adiabatically stable.

The upshot is that with $\m{E} \to B$ as above, $\bb{P}(\m{E})$ admits a cscK metric, and is therefore an adiabatically K-stable manifold that is not uniformly adiabatically K-stable. The same considerations apply to any strictly semistable vector bundle that is adiabatically slope stable, over a base with discrete automorphism group that admits a cscK metric. Note, on the other hand, that if $\m{E}$ is stable, then the left and right hand sides of \eqref{eq:adiabaticKstab} are both $O(k^{n-1})$ when $(\m{X}, \m{H})$ arises from a subbundle $\m{S}$, so the construction of Hong \cite{hong98} does not yield this phenomenon.
We thank M.\ Hattori for pointing out this feature to us.
\end{remark}

\bibliography{osc}
\bibliographystyle{amsplain}

\end{document}